\documentclass[12pt,a4paper]{extarticle}
\usepackage[english]{babel}
\usepackage[letterpaper,top=2cm,bottom=2cm,left=2.5cm,right=2.5cm,marginparwidth=1.75cm]{geometry}
\usepackage{titlesec}
\usepackage{abstract}
\usepackage{booktabs}  
\usepackage{tikz-cd}

\titleformat{\section}
  {\scshape\centering}
  {\thesection.}
  {1em}
  {}

\titleformat{\subsection}
  {\normalfont\bfseries}
  {\thesubsection.}
  {1em}
  {}

\titleformat{\subsubsection}
  {\normalfont\itshape}
  {\thesubsubsection.}
  {1em}
  {}
\usepackage{amsmath, amsfonts, amsthm}
\usepackage{graphicx}
\usepackage{hyperref}
\hypersetup{
  colorlinks=true,
  linkcolor=blue,
  citecolor=teal,
  urlcolor=magenta
}
\usepackage{mathtools}
\usepackage{amssymb} 
\usepackage[backend=bibtex,style=alphabetic,maxbibnames=99]{biblatex}
\allowdisplaybreaks 
\usepackage{comment} 
\usepackage[capitalize]{cleveref}
\usepackage{bbm}
\usepackage{nicematrix}
\usepackage{tikz}
\usepackage[ruled,linesnumbered]{algorithm2e}
\usepackage{microtype}

\newtheorem{theorem}{Theorem}[section]
\newtheorem{lemma}[theorem]{Lemma}
\newtheorem{corollary}[theorem]{Corollary}
\newtheorem{proposition}[theorem]{Proposition}

\newtheorem{problem}[theorem]{Problem}
\theoremstyle{definition}
\newtheorem{definition}[theorem]{Definition}
\newtheorem{remark}[theorem]{Remark}

\newtheorem{example}[theorem]{Example}

\newcommand{\Spec}{\mathrm{Spec}}

\newcommand{\KK}{\mathbb{C}}
\newcommand{\Ab}{\mathbb{A}}
\newcommand{\CC}{\mathbb{C}}
\newcommand{\Z}{\mathbb{Z}}
\newcommand{\PP}{\mathbb{P}}

\newcommand{\Gr}{\mathrm{Gr}}
\newcommand{\pr}{\mathrm{pr}}

\newcommand{\NN}{\mathcal{N}}
\newcommand{\XX}{\mathcal{X}}

\newcommand{\codim}{\operatorname{codim}}

\usepackage{xcolor}

\title{ \bf  Degenerating Discriminants }
\author{Viktoriia Borovik and Clara Briand}
\date{}

\begin{document}
\maketitle
\begin{abstract}
We study the behavior of projective dual varieties and discriminants under flat~degenerations. 
For Gröbner degenerations, we show that the conormal variety admits a Gröbner degeneration with opposite weights on the dual coordinates. Using Whitney stratifications and Sabbah’s formula, we describe the irreducible components and multiplicities of the special fiber of this degeneration, and hence of the limiting dual hypersurface. We then extend these results to higher associated hypersurfaces in Grassmannians. As applications, we recover classical formulas for hypersurfaces with isolated singularities, analyze degenerations of generic complete intersections and reciprocal linear spaces, and relate the theory to mixed discriminants of parametrized polynomial~systems.
\end{abstract}


\section{Introduction}
Discriminants describe the parameter values for which a polynomial system
fails to be generic. For a parametric system
$
        f_1(x)=\cdots=f_n(x)=0
$,
the discriminantal locus records when the system has a multiple root.
In the sparse case, where the equations have fixed monomial supports,
this leads to the classical $A$-discriminants of
Gelfand-Kapranov-Zelevinsky~\cite{Gelfand1994} and to mixed discriminants in
the sense of~\cite{Cattani2013}. Later, in Section~\ref{sec: discriminants}, we extend this viewpoint to
systems whose equations are linear combinations of fixed prescribed
polynomials.
\begin{example}
Generically, the following  system has two isolated solutions in the torus~$(\KK^*)^2$:
\begin{equation}\label{eq: sysIntro}
    a_0 + a_1x + a_2y + a_3(x^2+y^2) \;=\;
b_0 + b_1x + b_2y + b_3(x^2 +y^2) \;=\; 0.
\end{equation}
These solutions collide precisely when the point
$
([a_0:a_1:a_2:a_3],[b_0:b_1:b_2:b_3])$ in $(\PP^3)^\vee \times (\PP^3)^\vee
$
lies on a hypersurface $\Delta_\mathrm{mixed}$ of degree $4$, defined by a polynomial with $14$ terms:
\begin{equation}\label{eq: discIntro}
 \Delta_\mathrm{mixed} =   
 4a_1a_3b_0b_1 - 4a_3^2b_0^2 + \dots 
 +4a_0a_2b_2b_3-4a_0^2b_3^2 = \det \begin{pmatrix}
     Q(a) & B \\
     B & Q(b)
 \end{pmatrix},
\end{equation}
where $Q(x) = x_1^2 + x_2^2 - 4x_0x_3$ and $B = a_1b_1 + a_2b_2 - 2a_0b_3 - 2a_3b_0$.
\hfill~$\diamond$  
\end{example}
\noindent
This simple example already shows the geometry behind the discriminant. Consider  the map
$$
\varphi\colon (\mathbb C^*)^2 \to \mathbb P^3, \quad (x,y)\mapsto
[1:x:y:(x^2+y^2)].
$$
Let $X\subset \mathbb P^3$ be the closure of its image. The two
coefficient vectors $a,b\in(\mathbb P^3)^\vee$ define hyperplanes
$H_a,H_b$, hence a line $L_{ab}=H_a\cap H_b\subset\mathbb P^3$.
The solutions of the system are the preimages of
$X\cap L_{ab}$ in $(\mathbb C^*)^2$. A
multiple root occurs exactly when $L_{ab}$ meets $X$ non-transversely.
Thus the discriminant is the Hurwitz form of $X$~\cite{sturmfels2016hurwitzformprojectivevariety} in Stiefel coordinates~$a,b$.

This is part of a general dictionary. Given a projective variety
$X\subset \mathbb P^n$, its higher associated varieties parametrize
$\ell$-linear spaces in the corresponding Grassmannians ${\rm Gr}(\ell,\PP^n)$ whose intersection with
$X$ is non-transverse. This also includes the Chow hypersurface~\cite{Dalbec1995}  and the
projective dual variety $X^\vee\subset(\mathbb P^n)^\vee$. By the
celebrated Cayley trick, these higher associated varieties can be
studied as projective dual varieties of Segre embeddings. Thus the study
of the discriminants as above, equivalently, higher associated hypersurfaces,
reduces to the study of dual hypersurfaces, or of the corresponding incidence
varieties, namely conormal~varieties
$$
        {\cal N}_X =
        \overline{\{(x,H)\mid x\in X^{\mathrm{sm}},\ T_xX\subset H\}}
        \subset \mathbb P^n\times(\mathbb P^n)^\vee .
$$
The goal of this paper is to understand what happens to these objects under
flat degenerations.   We focus specifically on Gröbner~degenerations. More precisely, given a 
Gröbner family  with respect to  $\omega$ with irreducible general fiber
$X_t\subset \mathbb P^n$, we study the limit of the dual varieties
$X_t^\vee$, and more generally of the higher associated hypersurfaces. This problem is natural from the viewpoint of polynomial
systems.  Such flat degenerations are widely used to replace a given
polynomial system by a simpler initial system, often sparse, for which we know the solutions, and then deform its solutions back to the original system; see
\cite{BerndPolyhedralHomotopy, HuberSottileSturmfels1998, BurrSottileWalker2023, BettiBorovik2025}. This leads to the question of how discriminants behave under induced degenerations.
\begin{example}
Take a weight $\omega = (2,1)$. Consider the family of parametrization maps
$$
\varphi_t:(\mathbb C^*)^2\to\mathbb P^3,\quad
(x,y)\mapsto [1:x:y:x^2+t^2 y^2].
$$
The corresponding varieties $X_t = \overline{\rm{im}(\varphi_t)}$
degenerate to the normal toric variety $X_0=\overline{\operatorname{im}\varphi_0}$. On the level of polynomial systems, we deform the system~\eqref{eq: sysIntro} to the sparse system
\begin{equation}\label{eq: sparse intro}
 a_0+a_1x+a_2y+a_3x^2
=
b_0+b_1x+b_2y+b_3x^2
=0.   
\end{equation}
Its discriminant is the initial form of 
$\Delta_{\rm mixed}$~\eqref{eq: discIntro}, that can be also written as a determinant:
\begin{equation*}
    {\rm in}_v(\Delta_{\rm mixed}) = \det \begin{pmatrix}
        a_1b_2 - a_2b_1 & 2(a_2b_0-a_0b_2)\\
        2(a_2b_3-a_3b_2) & a_1b_2 - a_2b_1 
    \end{pmatrix}, \quad v = -(0,1,0,2,0,1,0,2).
\end{equation*}
That is, in this case the limit of the discriminant~\eqref{eq: discIntro}  is the discriminant of the limit~\eqref{eq: sparse intro}.
\hfill$\diamond$
\end{example}
\noindent
In the example above, the discriminants are Hurwitz forms, and the result follows from the regularity of the special
fiber $X_0$ in codimension one. This is our Corollary~\ref{cor: chhurdeg}, which also follows from~\cite[Theorem~4.1]{sturmfels2016hurwitzformprojectivevariety}. In general,
for dual varieties, other higher associated hypersurfaces or more general mixed discriminants, extra factors
and multiplicities may~appear. 

\smallskip

The  goal of this work is to describe these extra
components and their multiplicities systematically. Our approach builds on classical work of Lê, Teissier, Kleiman and Sabbah.
The induced degeneration of conormal varieties is in fact the relative conormal
space, whose relation to limits of tangent spaces was developed in
\cite{Teissier1982, LeTeissier1988}. Specializations of conormal schemes
in flat families were already studied in~\cite{Kleiman}, where it was shown  that extra components may appear over the singular
locus of the special fiber. Sabbah computed the
multiplicities in the limiting conormal cycle in terms of local Euler
obstructions and Milnor fibers~\cite{Sabbah1985}. In Section~\ref{sec: dualizing flat}, we adapt this
theory to our setting and make it explicit for Gröbner
degenerations: Proposition~\ref{prop: conormal deg} shows that the
conormal variety $\mathcal{N}_X$ admits a flat Gröbner degeneration with weight $(\omega,-\omega)$,
Theorem~\ref{thm: limit conormal strata} identifies the components of its special
fiber using certain Whitney stratifications, and Theorem~\ref{thm: sabbah mult} gives their
multiplicities using Sabbah's formula. Pushing forward to the dual
projective space yields 
factorization of the limiting
dual hypersurface in Proposition~\ref{prop: multiplicities in dual}. Thus, the results of Section~\ref{sec: dualizing flat} can be viewed as an
adaptation of the classical theory to the setting most
relevant for  applications.
\subsection*{Outline}
The rest of the paper is organized as follows. Section~\ref{sec: Preliminaries}
recalls the necessary background on Gröbner degenerations, projective
duality, higher associated hypersurfaces, and Whitney stratifications.
Section~\ref{sec: dualizing flat} gives the precise statements and proofs
for the limit of conormal varieties and dual hypersurfaces discussed
above. In Section~\ref{sec: dializing special}, we apply these results to
several explicit families: hypersurfaces with isolated singularities
(Proposition~\ref{prop: IHS}), generic
complete intersections (Lemma~\ref{lem: CI mult}), reciprocal linear
spaces (Theorem~\ref{thm: limitRLS}), and Gelfand-Tsetlin
degenerations of Grassmannians (Problem~\ref{prob: GT}). Section~\ref{sec: HighAssociated}
extends the theory to higher associated hypersurfaces in Grassmannians. Its
main result is Theorem~\ref{thm:higher-associated-limit}. Finally,
Section~\ref{sec: discriminants} applies the results to mixed
discriminants of horizontally parametrized polynomial~systems.

\noindent
All examples in this paper are accompanied by code, available in the open-access repository:
 \begin{center}
    \url{https://zenodo.org/records/21398038}.
\end{center}
\subsection*{Conventions and Notations}
We work over $\CC$. All schemes appearing in this paper are separated and of finite type, so we are in the setting of~\cite{EisenbudHarris2016}. By a variety we mean a reduced separated scheme of finite type.  The notation
 $\mathcal X/\Ab^1$ means a flat family
$\pi:\mathcal X\to\Ab^1$. Its total space $\mathcal X$ is assumed to be reduced and
irreducible.
For a pure-dimensional scheme $Z$, we write $[Z]$ for the 
\emph{fundamental
 algebraic cycle}. If
$
Z_1,\ldots,Z_s
$
are the irreducible components of~$Z^{\rm red}$ with general points $\eta_1,\ldots,\eta_s$, then the cycle $[Z]$ has \emph{geometric multiplicities} of $Z_i$ in $Z$ as coefficients, see~\cite[Sec.~1.5,4.3]{Fulton1998}:
\begin{equation}\label{eq: alg cycle}
[Z]
=
\sum_{i=1}^s
\operatorname{mult}_{Z_i}(Z)[Z_i],
\quad
\operatorname{mult}_{Z_i}(Z)
=
\operatorname{length}_{\mathcal O_{Z,\eta_i}}\mathcal O_{Z_i,\eta_i}.
\end{equation}
By the limit of a degeneration we 
mean the \emph{flat limit}, see~\cite{VakilRisingSea}. That is, it is the special fiber of the scheme-theoretic closure of $\pi^{-1}(\Ab^1\setminus\{0\})$. Its
support is the set-theoretic limit.

\section{Preliminaries}\label{sec: Preliminaries}
In this section we recall only the most important notions that will be used throughout
the paper. Additional background and notation will be introduced directly in the relevant sections.

\subsection*{Flat Gr\"obner Degenerations}

We begin with basic notions of flat and Gr\"obner degenerations
that will be used in the paper.
In this work, by a one-parameter \emph{flat degeneration} of a variety $X$ we mean a flat morphism
$\pi\colon \mathcal{X} \to \KK$ whose fibers over $\KK^\times$ are all
isomorphic to $X$ (called the \emph{general fibers}), and whose fiber over~$0$ is
the \emph{special fiber}. When the special fiber is a toric variety, the
degeneration is called a \emph{toric degeneration}. Flatness ensures that
the fibers share invariants determined by the Hilbert function, such as
dimension and degree.
In particular, we focus on one-parameter flat families determined by weight orders on the 
coordinate ring. These are   \emph{Gröbner degenerations}.  By \cite[Theorem~15.17]{Eisenbud}, Gr\"obner degenerations are flat. We briefly recall the construction. 

\smallskip

For an ideal
$I \subset \KK[y]$ with a degree-compatible monomial order $\succ$,
its initial ideal  
$
\mathrm{in}_{\succ} (I) = \mathrm{span}\{\mathrm{in}_{\succ}(g) \mid g \in I\}
$
is finitely generated by Hilbert's Basis Theorem, and any set of polynomials $G\subset I$
whose leading terms generate $\mathrm{in}_{\succ} (I)$ forms a Gr\"obner basis
of $I$. In what follows we use the following generalization of monomial orderings: let $\omega \in \mathbb{Z}^{n+1}$ be a \emph{weight vector}. For 
$f=\sum_\alpha c_\alpha y^\alpha \in \KK[y]$, the initial form
$\operatorname{in}_\omega(f)$ is the sum of those terms for which $\omega\cdot\alpha$
is maximal. In particular, $\operatorname{in}_\omega(f)$ need not be a monomial and, by~\cite[Prop.~1.11]{sturmfels1996grobner}, for any ideal $I$ one can choose $\omega$ such that $${\rm in}_\omega (I) = {\rm in}_\succ (I).$$ 
Gr\"obner bases naturally give rise to deformations of varieties.
Indeed, 
$\KK^\times$ acts on $\KK[y]$~by
$$
t \cdot y^\alpha \coloneqq t^{-\omega \cdot \alpha}y^\alpha.
$$
For a polynomial $f=\sum c_\alpha y^\alpha \in \KK[y]$, set
$v(f)\coloneqq\max_\alpha\{\langle \omega,\alpha \rangle\}$ and define the \emph{weighted homogenization} of the polynomial $f$ and the ideal $I$ with respect to a weight $\omega$ as:
$$
f_\omega^t \coloneqq t^{v(f)}(t\cdot f), \qquad
I_\omega^t \coloneqq (f_\omega^t \mid f\in I) \subseteq \KK[y,t].
$$
The resulting family of ideals is called the \emph{Gr\"obner degeneration}
of $I$ with respect to~$\omega$. We will also often use this term to refer to the family of projective varieties defined by these~ideals.

\subsection*{Gelfand-Kapranov-Zelevinsky Setup}

We start by recalling the fundamentals on projective duality. Namely, in this section we define the conormal and dual varieties of an irreducible projective variety, as well as its higher associated hypersurfaces in Grassmannians. For a more detailed exposition, see~\cite{Gelfand1994}.


\smallskip

Let $V$ be a vector space of dimension $n+1$ and denote by $\PP(V)$ its projectivization. We identify $\PP(V) \cong \PP(\KK^{n+1}) \eqcolon \PP^n$ by picking a basis of $V$.
The dual projective space ${(\PP^n)^\vee \coloneq \PP(V^\vee)}$ parametrizes hyperplanes in $\PP^n$: a point $z\in\PP(V^\vee)$ corresponds to the~hyperplane defined by
$$
H_z=\Bigl\{y\in\PP^n \ \Big|\ \sum_{i=0}^n z_i y_i=0\Bigr\}.
$$
Let $X\subset\PP^n$ be an irreducible projective variety 
of dimension $d$ and codimension $c \coloneq n-d$
with homogeneous vanishing ideal $I_X\subseteq \KK[y_0,\dots,y_n]$ and smooth locus denoted by
$X^\mathrm{sm}$. For a point $x\in X^\mathrm{sm}$,
the $d$-dimensional \emph{embedded projective tangent space} of $X$ at $x$
is given by
$$
T_xX
=
\Bigl\{y\in\PP^n\ \Big|\
\sum_{i=0}^n \frac{\partial g}{\partial y_i}(x)\,y_i=0
\ \text{for all } g\in I_X\Bigr\}.
$$

\begin{definition}
The \emph{conormal variety} of $X$ is the 
closure $
\mathcal{N}_X
=
\overline{\mathrm{Con}(X^{\rm sm})}$
in
$\PP^n\times(\PP^n)^\vee
$
of 
\begin{equation}\label{eq: conormal bundle}
 \mathrm{Con}(X^{\rm sm})
=
\bigl\{(x,z)\in \PP^n\times(\PP^n)^\vee
\mid x\in X^\mathrm{sm},\ T_xX\subseteq H_z\bigr\}.
\end{equation}
It is an irreducible projective variety of dimension $n-1$.
\end{definition}
\begin{remark}\label{rem: conormal ideal} We can explicitly write the defining ideal of $\mathcal{N}_X$. Indeed, 
let $I_X=(g_1,\dots,g_r)$ be the vanishing ideal and $J=(\partial g_i/\partial y_j)_{1 \leq i \leq r, 0 \leq j \leq n}$ be the $r\times(n+1)$ Jacobian. Consider
$$
M\coloneq\begin{pmatrix} z_0 & \cdots & z_n \\ & J & \end{pmatrix},
$$
called the \emph{augmented Jacobian matrix}.
Then the conormal variety $\mathcal{N}_X$ is cut out by the defining equations of $X$ together with the $(c+1)\times(c+1)$ minors of $M$, after removing the components supported on  $\rm{Sing}(X)$ and on $\{z=0\}$ by saturation. In symbols, we have
$$
I_{\mathcal{N}_X}
=
\sqrt{\Bigl(\bigl(I_X+I_{c+1}(M)\bigr):\bigl(I_X+I_c(J)\bigr)^\infty\Bigr):(z_0,\dots,z_n)^\infty}.
$$
\end{remark}
\noindent
The conormal variety is equipped with two natural  projections on $\PP^n$ and $(\PP^n)^\vee$, respectively:
$$
\pr_1\colon\mathrm{Con}(X^{\rm sm})\to\PP^n, \quad \pr_2\colon\mathrm{Con}(X^{\rm sm})\to(\PP^n)^\vee.
$$
Both of them encode information about $X$.
While the first projection $\pr_1$ recovers $X^\mathrm{sm}$,
the image of the second projection $\pr_2$ parametrizes
hyperplanes tangent to $X$ at smooth points. The closure of the latter image  is called the \emph{projectively dual variety} of $X$, denoted by $X^\vee$. 

\begin{definition}
The \emph{projectively dual variety} of $X$ is defined as
$
{X^\vee \coloneqq \overline{\pr_2(\mathrm{Con}(X^{\rm sm }))}}
$.
\end{definition}

\noindent
When $X^\vee$ is a hypersurface, its defining polynomial is called the \emph{$X$-discriminant}, denoted by
$$
\Delta_X\in \KK[z_0,\dots,z_n].
$$
If $\codim(X^\vee)>1$, we call $X$
\emph{defective} and set $\Delta_X=1$. 
Returning to the conormal variety, we list some of its most important properties.
A fundamental result is \emph{biduality}: 
$\mathcal{N}_X = \mathcal{N}_{X^\vee}$.
Recall now that the cohomology ring of
$\PP^n\times(\PP^n)^\vee$ is a truncated polynomial ring
$$
H^*(\PP^n\times(\PP^n)^\vee,\Z)
\simeq
\Z[s,t]/(s^{n+1},t^{n+1}),
$$
where $s$ and $t$ denote the hyperplane classes pulled back from the first
and second factors, respectively. The cohomology class of $\mathcal{N}_X$ in $H^*(\PP^n\times(\PP^n)^\vee,\Z)$ is given by the bilinear form
$$
[\mathcal{N}_X]
=
\delta_1(X)\,s^n t
+
\delta_2(X)\,s^{n-1}t^2
+
\cdots
+
\delta_n(X)\,s\, t^n.
$$
The integers $\delta_i(X)$ are called the \emph{polar degrees} of $X$. They are positive if and only if
$$\codim(X^\vee) \le i\le \dim(X)+1.$$ Moreover,
$\delta_{\dim(X)+1} = \deg(X)$ and $\delta_{\codim(X^\vee)} = \deg(X^\vee)$;
for further properties see~\cite{KOHN2021157}.

\medskip

Finally, we recall the notion of \emph{higher associated hypersurfaces} of $X$. Let
$\Gr(\ell,\PP^n)$ denote the Grassmannian parametrizing linear spaces of projective dimension $\ell$  in
$\PP^n$. For such linear space $L \in \Gr(\ell,\PP^n)$, take a matrix
$A \in \KK^{(n-\ell)\times(n+1)}$  such that 
\[
L = \PP(\ker(A)).
\]
The entries of $A$ are called the
\emph{Stiefel coordinates} of $L$, and the maximal minors
$p_{i_1\cdots i_{n-\ell}}$ of $A$ are its \emph{Pl\"{u}cker coordinates}.
Note that the Pl\"{u}cker coordinates are uniquely determined up to a common scalar
factor, whereas the Stiefel coordinates are not. Indeed, left multiplication of~$A$ by any invertible
$(n-\ell)\times(n-\ell)$ matrix does not change its kernel.

\smallskip

The dual variety $X^\vee$ records hyperplanes tangent to
$X^{\mathrm{sm}}$, equivalently, hyperplanes whose intersection with
$X$ is non-transverse. This idea extends to linear spaces of
arbitrary~dimension.
\begin{definition}\label{def: assocHyper}
For $i\in\{0,\dots,d\}$, define  the \emph{higher associated variety} of $X$ as
\begin{equation}\label{eq: assocHyper}
\mathcal{Z}_i(X)
\coloneqq
\overline{
\bigl\{
L\in \Gr(n-d+i-1,\PP^n)
\mid \exists\,x\in X^\mathrm{sm} \cap L:\ \dim(L\cap T_xX)\ge i
\bigr\}
}.    
\end{equation} 
\end{definition}
\noindent
Notice that for $i=d$, the variety $\mathcal{Z}_d(X)$ is exactly  the projective
dual variety $X^\vee\subset(\PP^n)^\vee$.
We will focus on the case when $\mathcal{Z}_i(X)$ is
a hypersurface in $\Gr(n-d+i-1,\PP^n)$. This happens precisely when
$i\le\dim(X)-\codim(X^\vee)+1$.
For $i=0$, $\mathcal{Z}_0(X)$ is the \emph{Chow~hypersurface} of $X$,
consisting of linear spaces $L\in\Gr(n-d-1,\PP^n)$ that meet $X$. Its
defining equation in Pl\"{u}cker coordinates has degree $\deg X$ and is
called the \emph{Chow form} of $X$. For $i=1$,~$\mathcal{Z}_1(X)$ is the
\emph{Hurwitz variety}, parametrizing linear spaces
$L\in\Gr(n-d,\PP^n)$ that meet $X$ non-transversely. When $\deg X > 1$, it
is a hypersurface and its defining equation is called the \emph{Hurwitz
form}~of~$X$.

\subsection*{Whitney Stratifications}
Whitney stratifications will be one of the main tools we will use in what follows. We introduce them briefly in this section. For more details, we refer to the textbooks~\cite{Dimca1992, GoreskyMacPherson1988}.

\smallskip

A \emph{stratification} of 
$X$ is a finite partition
$\mathcal S=\{S_\alpha\}_{\alpha \in A}$ of $X$ into smooth locally closed subvarieties, called
\emph{strata}, such that for every $\alpha \in A$, the boundary
$\overline{S_\alpha}\setminus S_\alpha$ is a union~of~strata.

\begin{definition}\label{def: WhitStrat}
A stratification $\mathcal S$ is called a  \emph{Whitney stratification} if for every
pair of strata satisfying $S_\beta\subset\overline{S_\alpha}\setminus S_\alpha$ the following conditions hold:
\begin{enumerate}
    \item[\rm(a)] for every sequence $(x_n)\subset S_\alpha$ with
    $x_n\to x\in S_\beta$ and $T_{x_n}S_\alpha\to T$, one has
    $T_xS_\beta\subseteq T$;
    \item[\rm(b)] for every pair of sequences
    $(x_n)\subset S_\alpha$, $(y_n)\subset S_\beta$ with
    $x_n,y_n\to x\in S_\beta$, if the secant lines
    $\overline{x_ny_n}$ converge to a line $\ell$ and the tangent planes 
    $T_{x_n}S_\alpha$ converge to $T$, then $\ell\subseteq T$.
\end{enumerate}
\end{definition}
\noindent
A Whitney stratification is called \emph{minimal} (or \emph{coarsest}) if, after merging any number of
strata, the resulting stratification no longer satisfies the above conditions. A fundamental theorem is that every complex algebraic variety admits a unique minimal Whitney
stratification, see~\cite{Whitney1965, Teissier1982, Mather2012}. When a closed subvariety $Y\subset X$ is fixed, by a minimal Whitney
stratification \emph{compatible with $Y$} we mean a coarsest Whitney stratification
of $X$ refining the partition
$
X=(X\setminus Y)\sqcup Y.
$
Equivalently, the subvariety $Y$ is required to be a union of strata. 
\begin{definition}\label{def: map strat}
A Whitney stratification of a morphism $f:X\to Y$ is a pair of Whitney
stratifications $\mathcal S_X$ of $X$ and $\mathcal T_Y$ of $Y$ such
that for every stratum $S\in\mathcal S_X$ there exists a stratum
$T\in\mathcal T_Y$ such that
$
f(S)\subseteq T
$
and the restriction
$
f|_S:S\to T
$
is a submersion onto its image.    
\end{definition}
Whitney stratifications of maps always exist, see Theorem~1 in Section~4
of~\cite{Hir77}.~For checking Whitney regularity, the L\^e-Teissier conormal
criterion~\cite{LeTeissier1988} provides an algebraic test. Effective
algorithms for computing Whitney stratifications are given in
\cite{HelmerNanda2022,HelmerMohr2024}.
\begin{remark}\label{rk: GAGApple}
We note that Whitney stratifications are more common in the study of complex analytic
spaces, that is, complex spaces where singularities are allowed. These are
more general than complex algebraic varieties, since they are locally
defined by holomorphic functions rather than polynomials. They are naturally
endowed with the Euclidean topology.

However, throughout, we identify complex algebraic varieties with their associated
ana\-lytic spaces whenever we discuss limits of points or tangent spaces, and
Whitney conditions. Indeed, for constructible subsets of a complex algebraic variety, Euclidean and Zariski
closure have the same underlying set. Therefore, points in the Zariski closure of a constructible set may be
approached by sequences of complex points from that set. 
Conversely, by Chow's theorem~\cite{Chow1949}, closed
analytic subsets of a complex projective variety are also Zariski~closed.
\end{remark}
\begin{remark}\label{rem: whitney properties}
We will use two standard properties of Whitney stratifications.
First, if $\mathcal S=\{S_\alpha\}$ and
$\mathcal T=\{T_\beta\}$ are some Whitney stratifications of $X$  and $Y$, respectively, then
$$
\mathcal S\times\mathcal T
=
\{S_\alpha\times T_\beta\}_{\alpha,\beta}
$$
is a Whitney stratification of $X\times Y$. In particular, if $Y$ is smooth,
then $\{S_\alpha\times Y\}_\alpha$ is a Whitney stratification of
$X\times Y$. Second, if $Z\subset X$ is a smooth subvariety transverse to
all strata $S_\alpha$, then the nonempty intersections
$
S_\alpha\cap Z
$
form a Whitney stratification of $X\cap Z$. These facts are direct
consequences of the definition; see, for
example,~\cite[pp.~12-13]{Gibson1976}.    
\end{remark}

\section{Dualizing Flat Families}\label{sec: dualizing flat}

In this section we study the
behavior of projective duality under flat degenerations of~${X \subset \PP^{n}}$.


\smallskip


For the rest of this section, take 
$\Ab^1=\Spec \, \KK[t]$ and let
$
\XX\subseteq \PP^n\times \Ab^1
$
be a reduced, irreducible closed subscheme, flat over $\Ab^1$ with the projection denoted by 
$
\pi:\XX\rightarrow \Ab^1
$.   We write
$\mathcal I\subseteq \KK[t,y]$ for its vanishing ideal, homogeneous in
$ y=(y_0,\ldots,y_n)$, and 
$
X_t:=\pi^{-1}(t)
$
for the fiber over $t\in \Ab^1$. Additionally, we assume that  general fibers $X_t$ for $t\neq 0$ are reduced.

The degeneration of~$X$ induces a degeneration of the conormal variety $\mathcal{N}_{X}$. The latter construction has already
appeared in greater generality in the analytic setting under the~name of the
\emph{relative conormal space} of a map, see~\cite[Section 1.6]{trang2023limits}, and also~{\cite{Teissier1982,  LeTeissier1988,gaffney2019}}.
\begin{definition}\label{def: rel conormal}
The \emph{conormal space $\NN_{\XX/\mathbb{A}^1}$ relative to} $\pi$ is the scheme-theoretic closure  of the set of triples $(x, H, \pi(x))$
such that $x$ is a smooth point of the general fiber $\pi^{-1}(\pi(x))$
 and $H$ is a tangent
hyperplane to $\pi^{-1}(\pi(x))$ at $x$. In symbols, we have
\begin{equation}\label{eq: relative conormal}
 \NN_{\XX/\mathbb{A}^1}
:=
\overline{\bigcup_{t\in\mathbb{A}^1\setminus\{0\}}\mathrm{Con}(X_t^{\rm sm})}
\subseteq \PP^n\times(\PP^n)^\vee\times\mathbb{A}^1.   
\end{equation}
\end{definition}
\begin{remark}\label{rem: relative conormal ideal}
Similarly to Remark~\ref{rem: conormal ideal}, we can write down the defining ideal of $
\NN_{\XX/\mathbb{A}^1}$. 
Let~$J_y$ be the Jacobian matrix of the ideal $\mathcal{I}$ with
respect to the $y$-variables and consider the augmented Jacobian   matrix $M_y=\binom z{J_y}$. Then, 
$$
I_{\NN_{\XX/\mathbb{A}^1}}
=
\sqrt{\Bigl(\bigl(\mathcal I+I_{c+1}(M_y)\bigr):
\bigl(\mathcal I+I_c(J_y)\bigr)^\infty\Bigr):(z_0,\dots,z_n)^\infty},
$$
where $c = {\rm codim} (X_t)$ for $t \in \Ab^1$. Note that, by Remark~\ref{rem: no saturation by t}, no saturation by $(t)$ is needed.
\end{remark}
\subsection*{The support of the limiting conormal cycle}
In what follows, 
Proposition~\ref{prop: conormal deg} and
Theorem~\ref{thm: limit conormal strata} overlap with results in~\cite{Kleiman}. Kleiman showed that under a flat
degeneration the conormal scheme specializes to a union of conormal schemes,
with possible extra components lying over subvarieties of the singular locus
of the special fiber. Here we refine the description of these extra
components using Whitney stratifications, and we make the conormal degeneration
explicit for Gr\"obner degenerations.
\begin{proposition}\label{prop: conormal deg}
Let $\XX\to\Ab^1$ be the Gröbner degeneration of
$X\subset\PP^n$ with respect to the weight $\omega$. Then the relative conormal
space ${\cal N}_{\XX/\Ab^1}$ is the Gröbner degeneration of the conormal variety
${\cal N}_X\subset\PP^n\times(\PP^n)^\vee$ with respect to the weight $(\omega,-\omega)$.
In particular,
$$
{\cal N}_{X_t} = ({\cal N}_{\XX/\Ab^1})_t
\quad\text{for every }t\in\KK^\times,
$$
and, set-theoretically, we have the inclusion ${\cal N}_{X_0} \subseteq ({\cal N}_{\XX/\Ab^1})_0$.
\end{proposition}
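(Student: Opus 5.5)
The plan is to exploit the torus action that defines the Gröbner family. For $t\in\KK^\times$ let $\lambda_t\colon\PP^n\to\PP^n$ be the diagonal automorphism $y_i\mapsto t^{-\omega_i}y_i$, with the sign fixed so that $X_t=\lambda_t(X)$. This is consistent with the construction $f^t_\omega=t^{v(f)}(t\cdot f)$: the ideal of $X_t$ is obtained from $I_X$ by the substitution $y\mapsto t^{\omega}$-rescaled $y$, up to the scalar $t^{v(f)}$, which does not change the zero set. A linear automorphism $g$ of $\PP^n$ carries tangent spaces to tangent spaces and smooth points to smooth points. It acts on hyperplanes by the contragredient $g^{-\top}$, so that $H_z\mapsto H_{g^{-\top}z}$. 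Therefore $\mathrm{Con}(X_t^{\rm sm})=(\lambda_t\times\lambda_t^{-\top})(\mathrm{Con}(X^{\rm sm}))$. Since $\lambda_t$ is diagonal with entries $t^{-\omega_i}$, the map $\lambda_t^{-\top}$ is diagonal with entries $t^{\omega_i}$. Hence the pair acts on $\PP^n\times(\PP^n)^\vee$ exactly as the one-parameter subgroup with weight $(\omega,-\omega)$.

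\emph{Step 1.} Taking closures, $\NN_{X_t}=\overline{\mathrm{Con}(X_t^{\rm sm})}$ is the image of $\NN_X$ under the weight-$(\omega,-\omega)$ action. Its ideal is therefore obtained from $I_{\NN_X}$ by the same substitution, namely $f\mapsto t^{v(f)}(t\cdot f)$ specialized at $t$.

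\emph{Step 2.} Let $\mathcal{G}\subseteq\PP^n\times(\PP^n)^\vee\times\Ab^1$ be the Gröbner degeneration of $\NN_X$ with weight $(\omega,-\omega)$, with ideal $(I_{\NN_X})^t_{(\omega,-\omega)}$. By the standard description, $\mathcal{G}$ is the scheme-theoretic closure of the orbit family $\bigcup_{t\neq0}\NN_{X_t}\times\{t\}$. Indeed, the homogenized ideal is $t$-saturated, since $f^t_\omega$ is not divisible by $t$ and dehomogenizing at $t=1$ recovers $I_{\NN_X}$. It is flat by \cite[Theorem~15.17]{Eisenbud}, and its fiber over $t\neq0$ is the twisted ideal from Step~1. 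On the other hand, $\NN_{\XX/\Ab^1}$ is by definition the closure of $\bigcup_{t\neq0}\mathrm{Con}(X_t^{\rm sm})\times\{t\}$, and these fibers agree with those of $\mathcal{G}$ by Step~1. Both schemes are closures of the same reduced set: the total space over $\KK^\times$ is isomorphic to $\NN_X\times\KK^\times$, hence reduced and irreducible. The scheme-theoretic closure of a reduced scheme is reduced, so $\NN_{\XX/\Ab^1}=\mathcal{G}$. This also yields $\NN_{X_t}=(\NN_{\XX/\Ab^1})_t$ for $t\neq0$.

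\emph{Step 3 (set-theoretic inclusion).} I would show $\mathrm{Con}(X_0^{\rm sm})\subseteq(\NN_{\XX/\Ab^1})_0$ and then take closures; if $X_0$ is reducible, this is done componentwise. Take $x\in X_0^{\rm sm}$, at which $X_0$ is reduced of dimension $d$. By flatness, the total space $\XX$ is smooth at $x$, and $\pi$ is a submersion there. Hence there is a local section $t\mapsto x_t\in X_t^{\rm sm}$ with $x_t\to x$, and the tangent spaces vary continuously, $T_{x_t}X_t\to T_xX_0$, in the Euclidean topology (Remark~\ref{rk: GAGApple}). Given a hyperplane $H\supseteq T_xX_0$, choose hyperplanes $H_t\supseteq T_{x_t}X_t$ with $H_t\to H$; this is possible because the conormal spaces form a vector bundle near $x$. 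Then $(x,H)$ is a limit of points of $\mathrm{Con}(X_t^{\rm sm})$, as required.

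The main obstacle is the scheme-theoretic part of Step~2. One has to match the sign and scalar conventions so that the weight on $z$ is precisely $-\omega$. One must also confirm that the radical and saturations in Remark~\ref{rem: relative conormal ideal} give the reduced closure, with no extra $t$-torsion. The orbit argument handles both, because the Gröbner family of a prime ideal is the closure of its torus orbit family and is automatically $t$-saturated.
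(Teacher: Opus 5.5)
Your proposal is correct and follows essentially the same route as the paper: $\mathrm{Con}(X_t^{\rm sm})$ is the $(t^{\omega},t^{-\omega})$-translate of $\mathrm{Con}(X^{\rm sm})$, and the inclusion over $X_0$ comes from a local section through a smooth point plus lifting the conormal vector. The differences are small: you get the translate from the contragredient action on hyperplanes rather than by rescaling the Jacobian, your Step~2 spells out the $t$-saturation and reducedness that the paper leaves implicit, and you lift via the conormal vector bundle where the paper uses $z(t)=\lambda^\top J(t,x(t))$.

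One caveat applies equally to the paper: your Step~3 only covers components along which $X_0$ is generically reduced, since at points of a non-reduced component (e.g.\ a conic degenerating to a double line) $\pi$ is not smooth. So under the convention $\NN_{X_0}:=\NN_{X_0^{\rm red}}$ you would still need a global argument there: purity of dimension of $(\NN_{\XX/\Ab^1})_0$ combined with Thom's $a_\pi$-condition at general points of the component, as in the proof of Theorem~\ref{thm: limit conormal strata}.
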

\begin{proof}
For $t\in\KK^\times$, consider the following torus action on $\PP^n\times(\PP^n)^\vee$:
$$
\phi_t: 
(x,z)\mapsto
(t^\omega\cdot x,t^{-\omega}\cdot z) \coloneq \bigl(\,
[t^{\omega_0}x_0:\cdots:t^{\omega_n}x_n], \,
[t^{-\omega_0}z_0:\cdots:t^{-\omega_n}z_n]\,
\bigr).
$$
We show that $\phi_t({\rm Con}(X^{\rm sm}))={\rm Con}(X_t^{\rm sm})$. Indeed,
consider a Gröbner basis $\{g_1,\ldots,g_r\}$ of the vanishing ideal $I_X$ with respect to the weight $\omega$. Write
$
g_\omega^t(y)=t^{v(g)}g(t^{-\omega}\cdot y)$,
where $v(g_i) =\max_{\alpha \in \text{supp}(g_i)}\{\langle \omega,\alpha\rangle\}
$.
The point $t^\omega\cdot x$ is in $X_t$ if and only if $x\in X$, since
$$
(g_i)_\omega^t(t^\omega\cdot x)= t^{v(g_i)} g_i(t^{-\omega}\cdot t^\omega \cdot x) = t^{v(g_i)}g_i(x).
$$
Moreover,  the Jacobian matrix of $I_{X_t}$ at $t^\omega\cdot x$ is obtained
from the Jacobian matrix of $I_X$ at~$x$ by multiplying rows and
columns by nonzero scalars. Thus both matrices have the same rank:
\begin{equation}\label{eq: scaled jac}
\frac{\partial (g_i)_\omega^t}{\partial x_j}(t^\omega\cdot x)
=
t^{v(g_i)-\omega_j}
\frac{\partial g_i}{\partial x_j}(x).    
\end{equation}
In particular, smooth points of $X$ and of $X_t$ are in bijection under
$x\mapsto t^\omega\cdot x$.
Now let $(x,z)\in {\rm Con}(X^{\rm sm})$. By definition,
$T_{x}X \subset H_{z}$. Equivalently, 
$z$ lies in the row span of the Jacobian matrix of $I_X$ at
$x$. Then, by~\eqref{eq: scaled jac}, the point 
$t^{-\omega}\cdot z$ lies in the row span of the Jacobian matrix of
$X_t$ at $t^\omega\cdot x$. Hence, we have
$$T_{t^\omega\cdot x} (X_t) \subset H_{t^{-\omega} \cdot z} \quad \text{and} \quad 
(t^\omega\cdot x,t^{-\omega}\cdot z)\in {\rm Con}(X_t^{\rm sm}).$$
The same argument applied to the inverse rescaling gives the reverse
inclusion. We obtain
\begin{equation*}
{\rm Con}(X_t^{\rm sm})     = (t^\omega, t^{-\omega}) \cdot {\rm Con}(X^{\rm sm}) \quad \forall \, t\neq 0.
\end{equation*}
By definition, the closure of the LHS over all $t\in \CC^\times$ is exactly the relative conormal~${\cal N}_{\mathcal X/\Ab^1}$, while the closure of the RHS gives
the Gröbner degeneration of ${\cal N}_X$ with respect to the weight vector~$(\omega,-\omega)$.
Consequently, we have the equality 
$
({\cal N}_{\mathcal X/\Ab^1})_t={\cal N}_{X_t}
$ for every $t\in\KK^\times$.

\smallskip

To show the last statement, take $(x_0,H_0)\in \operatorname{Con}(X_0^{\rm sm})$. By definition, 
$x_0\in X_0^{\rm sm}$ and
$
T_{x_0}X_0\subset H_0
$. We show that it is a limit of conormal points of the nearby fibers.
Indeed, let $J(t,x)$ be the Jacobian matrix of
the vanishing ideal of the fiber $X_t$, with respect to the fiber variables, as in Remark~\ref{rem: relative conormal ideal}.
Since $x_0\in
X_0^{\rm sm}$, $J(0,x_0)$ has the expected rank $\codim(X_0)$. Hence, we may choose a curve
$
x(t)\in X_t^{\rm sm}$ with $x(0)=x_0
$.
Let $z_0$ define $H_0$. The condition
$T_{x_0}X_0\subset H_0$ means that $z_0$ lies in the row span of
$J(0,x_0)$, so choose $\lambda$ with
$
z_0=\lambda^\top J(0,x_0).
$
Set $z(t)=\lambda^\top J(t,x(t))$. Then $z(t)\to z_0$, and
$z(t)$ lies in the row span of $J(t,x(t))$, hence
$$
T_{x(t)}X_t\subset H_{z(t)} \; \iff \; (x(t),H_{z(t)})\in \operatorname{Con}(X_t^{\rm sm})
\quad\text{and}\quad
(x(t),H_{z(t)})\to (x_0,H_0).
$$
Therefore
$
\operatorname{Con}(X_0^{\rm sm})
\subseteq
(\mathcal N_{\mathcal X/\mathbb A^1})_0,
$
and taking closures gives inclusion
$
\mathcal N_{X_0}\subseteq
(\mathcal N_{\mathcal X/\mathbb A^1})_0
$.
\end{proof}
\begin{remark}\label{rem: no saturation by t}
In the proof we showed that
$
\operatorname{Con}(X_0^{\rm sm})\subseteq
(\mathcal N_{\mathcal X/\Ab^1})_0
$.
Hence, 
the closure in
Definition~\ref{def: rel conormal} may equivalently be taken over the whole
line $\Ab^1$. Thus our definition agrees with the usual analytic definition
of the relative conormal space, as in~\cite{Teissier1982,  LeTeissier1988, gaffney2019}.
\end{remark}    
The above proposition immediately implies the following statement for dual varieties.
\begin{corollary}\label{cor: dual deg}
A flat Gr\"obner degeneration of $X\subset\PP^n$ with respect to the weight $\omega$ induces the flat 
Gr\"obner degeneration of $X^\vee\subset(\PP^n)^\vee$ with respect to the opposite weight
$-\omega$. Moreover, 
$$
(X_t)^\vee=(X^\vee)_t \quad \text{for } \; t\neq0,
\quad \text{and} \quad 
(X_0)^\vee\subseteq (X^\vee)_0.
$$
If $\dim (X_0)^\vee=\dim X^\vee$, then
$
\deg (X_0)^\vee\leq \deg X^\vee
$ with equality if and only if $(X_0)^\vee = (X^\vee)_0$.
\end{corollary}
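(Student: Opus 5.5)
The plan is to push everything forward from Proposition~\ref{prop: conormal deg} along the second projection $\pr_2\colon \PP^n\times(\PP^n)^\vee\times\Ab^1\to(\PP^n)^\vee\times\Ab^1$. For $t\neq0$, the proof of Proposition~\ref{prop: conormal deg} gives ${\rm Con}(X_t^{\rm sm})=(t^\omega,t^{-\omega})\cdot{\rm Con}(X^{\rm sm})$. Projecting to the second factor and taking closures yields
$$(X_t)^\vee=\overline{\pr_2({\rm Con}(X_t^{\rm sm}))}=t^{-\omega}\cdot X^\vee .$$
Hence the family $\bigcup_{t\neq0}(X_t)^\vee\times\{t\}$ is exactly the torus orbit family of $X^\vee$ under the weight $-\omega$. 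Its closure in $(\PP^n)^\vee\times\Ab^1$ is therefore the Gröbner degeneration of $X^\vee$ with respect to $-\omega$. Flatness follows from \cite[Theorem~15.17]{Eisenbud}, or directly: the total space is the closure of the part over $\KK^\times$, so it is irreducible and dominates $\Ab^1$, hence it is torsion-free over $\KK[t]$. I would also note that the sign convention $t\cdot y^\alpha=t^{-\omega\cdot\alpha}y^\alpha$ matches rescaling points by $t^{\omega}$. The dual coordinates are rescaled by $t^{-\omega}$, which is the weight-$(-\omega)$ action. This gives $(X_t)^\vee=(X^\vee)_t$ for $t\neq0$.

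For the inclusion $(X_0)^\vee\subseteq(X^\vee)_0$, I would use properness of $\pr_2$ on $\PP^n\times(\PP^n)^\vee\times\Ab^1$. Since $\pr_2$ is closed, $\pr_2(\NN_{\XX/\Ab^1})$ is closed. It equals the closure of $\pr_2$ of the part over $\KK^\times$, i.e.\ the total space $\mathcal Y$ of the dual degeneration, and so $\pr_2((\NN_{\XX/\Ab^1})_0)=\mathcal Y_0=(X^\vee)_0$ set-theoretically. Proposition~\ref{prop: conormal deg} gives $\NN_{X_0}\subseteq(\NN_{\XX/\Ab^1})_0$. Applying $\pr_2$, we get $(X_0)^\vee=\pr_2(\NN_{X_0})\subseteq(X^\vee)_0$. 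Here $X_0$ might be reducible; in that case I would read $\NN_{X_0}$ and $(X_0)^\vee$ componentwise, using the smooth locus of $X_0$ as in the proposition.

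For the degree statement, flatness gives $\deg(X^\vee)_0=\deg X^\vee$ and $\dim(X^\vee)_0=\dim X^\vee$, with degrees taken as cycle degrees, i.e.\ the Hilbert polynomial leading coefficient. Assume $\dim(X_0)^\vee=\dim X^\vee$. Then $(X_0)^\vee$ is a union of top-dimensional components of $(X^\vee)_0$, so
$$\deg(X_0)^\vee\le\sum_i\operatorname{mult}_{Z_i}((X^\vee)_0)\deg Z_i=\deg X^\vee,$$
using \eqref{eq: alg cycle}. Equality holds iff every component of $(X^\vee)_0$ lies in $(X_0)^\vee$ with multiplicity one. The special fiber of a Gröbner degeneration has no embedded components beyond what the initial ideal dictates. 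So I expect the main obstacle to be this last step: showing that equality of degrees forces equality of schemes, not merely of reduced cycles. When $X^\vee$ is a hypersurface, which is the main case, the flat limit is the hypersurface of ${\rm in}_{-\omega}(\Delta_X)$, which has no embedded components. Equality of cycles then gives ${\rm in}_{-\omega}(\Delta_X)=\Delta_{X_0}$ up to scalar, hence equality of schemes. In general, I would phrase equality as equality of fundamental cycles, or argue that $(X^\vee)_0$ is generically reduced and then identify it with $(X_0)^\vee$ via the reduced structure. The converse direction is immediate from flatness.
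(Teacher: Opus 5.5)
Your proposal is correct and follows the route the paper intends. The paper calls the corollary immediate from Proposition~\ref{prop: conormal deg}: one transports the orbit description $\mathrm{Con}(X_t^{\rm sm})=(t^\omega,t^{-\omega})\cdot \mathrm{Con}(X^{\rm sm})$ and the inclusion $\mathcal N_{X_0}\subseteq(\mathcal N_{\mathcal X/\Ab^1})_0$ to $(\PP^n)^\vee\times\Ab^1$ via the proper map $\pr_2$, exactly as you do. Your degree argument through fundamental cycles supplies details the paper leaves implicit, and so does your point that the final ``if and only if'' should be read as equality of fundamental cycles, which becomes scheme-theoretic equality in the hypersurface case, where the limit is cut out by $\mathrm{in}_{-\omega}(\Delta_X)$.
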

\begin{remark}
Proposition \ref{prop: conormal deg} and Corollary \ref{cor: dual deg} were proven for the vanishing locus of the projective varieties involved.
Therefore, in the statements above, if the special fiber $X_0$ is non-reduced, then
$\mathcal N_{X_0}$ and $X_0^\vee$ are understood as
$
\mathcal N_{X_0^{\mathrm{red}}}$ and
$
(X_0^{\mathrm{red}})^\vee
$,
respectively. Moreover, if $X_0$ is reducible, ${\cal N}_{X_0}$  or $X_0^\vee$ is the union of the conormal or dual
varieties of the irreducible components of $(X_0)^{\mathrm{red}}$. The scheme structure of $X_0$, however, still affects the
multiplicities appearing in the limiting conormal cycle
$
[(\mathcal N_{\mathcal X/\mathbb A^1})_0]
$, as we will explain in the next subsection.
\end{remark}
The inclusions in Proposition~\ref{prop: conormal deg} and Corollary~\ref{cor: dual deg} are strict in general. The next example shows that the limit of the dual (and hence conormal) variety may have extra components. 
\begin{example}\label{ex: umbrella}
Consider the family
$
{\cal X}={\cal V}(y_1^2y_2-y_0^2y_3+t^2(y_3^3+y_2^3))\subset\PP^3\times \Ab^1 
$. 
This is the flat Gr\"obner degeneration of the smooth cubic with weight $\omega=(1,1,0,0)$.
The special fiber is
$$
X_0={\cal V}(y_1^2y_2-y_0^2y_3),
$$
known as the Whitney umbrella. Its dual $(X_0)^\vee$ is given by ${\cal V}(z_1^2z_2+z_0^2z_3)$,
whereas the special fiber of the induced dual degeneration with respect to the weight $(-1,-1,0,0)$ is 
\begin{equation}\label{eq: limit Whitney}
 (X^\vee)_0
=
{\cal V}\left(
z_3^3z_2^3(z_3^3-z_2^3)
(z_1^2z_2+z_0^2z_3)
\right).   
\end{equation}
Thus, we have strict inclusions:  $(X_0)^\vee\subsetneq (X^\vee)_0$ and $\mathcal{N}_{X_0} \subsetneq (\mathcal{N}_{\mathcal{X}/\Ab^1})_0$.
\hfill~$\diamond$  \end{example}
The extra factors in Example~\ref{ex: umbrella} come from subvarieties of the
singular locus of the special fiber $X_0$. We now introduce the tool which
will allow us to identify these subvarieties.
\begin{definition}\label{def: thom}[Thom's $a_\pi$-condition]
Let $\mathcal S$ be a stratification of the map
$\pi:{\XX\to\Ab^1}$. 
We say that
$\mathcal S$ satisfies \emph{Thom's $a_\pi$-condition} if, for every pair of strata satisfying $S_\beta\subset\overline{S_\alpha}\setminus S_\alpha$, every point $x\in S_\beta$,
and every sequence $(x_n)\in S_\alpha$ with $x_n\to x$, we~have
$$
\text{if } \; T_{x_n}\bigl(S_\alpha\cap X_{\pi(x_n)}\bigr)\to T, \quad \text{then } \; 
T_x\bigl(S_\beta\cap X_{\pi(x)}\bigr)\subseteq T.
$$
In particular, if $S_\alpha\subseteq X_0$, this gives the non-relative Whitney {\rm (a)}-condition~\eqref{def: WhitStrat}.
\end{definition}
The existence of Thom's $a_\pi$-stratifications for morphisms mapping to smooth curves
goes back to Hironaka~\cite[Sec.~5, Corollary~1]{Hir77}. 
In our setting, this condition simplifies considerably. 
Indeed, any stratification of the map $\pi$ in the sense of
Definition~\ref{def: map strat} will satisfy Thom's  $a_{\pi}$-condition. Let $\mathcal S$ be such a Whitney stratification
of $\pi:\XX\to\Ab^1$. 
Since $\pi$ is projective,
hence proper, Thom's first isotopy lemma~\cite{Thom1969} implies that the restriction
$
\pi|_S:S\to \pi(S)
$
is a locally trivial fibration for every stratum $S \in \mathcal{S}$. The theorem of
Briançon-Maisonobe-Merle~\cite[Theorem 4.2.1]{Briancon1994} then implies that $\mathcal S$ satisfies
Thom's~$a_\pi$-condition. 

Moreover, for Gröbner degenerations, it is enough to use the usual Whitney stratification (in the sense of Definition~\ref{def: WhitStrat})
of $\mathcal X$ compatible with $X_0$, as shown in the following lemma. 
\begin{lemma}\label{lem: Whitney for Grobner}
Let $\mathcal X\to \mathbb A^1$ be a Gröbner degeneration, and let
$\mathcal S$ be the minimal Whitney stratification of $\mathcal X$
compatible with the special fiber $X_0$. Then a Whitney stratification of the map $\pi:\mathcal X\to\mathbb A^1$
is given by $\mathcal S$ and the stratification
$\{\Ab^1\setminus\{0\},\{0\}\}$ of the line $\mathbb A^1$.
\end{lemma}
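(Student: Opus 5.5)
Our approach is to use the torus action that builds the Gröbner family. For $t\in\KK^\times$ let $\psi_s$ act on $\PP^n\times\Ab^1$ by $\psi_s(y,t)=(s^{\omega}\cdot y,\,st)$. Since $\mathcal I$ is generated by the weighted homogenizations $f^t_\omega$, the total space $\XX$ is invariant under $\psi_s$. As in the proof of Proposition~\ref{prop: conormal deg}, $\psi_s$ maps $X_t$ isomorphically onto $X_{st}$. Two consequences follow. First, $\XX\setminus X_0\cong X_1\times\KK^\times$ via $(y,t)\mapsto(t^{-\omega}\cdot y,t)$, and under this isomorphism $\pi$ becomes the projection to the second factor. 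Second, every $\psi_s$ is an algebraic automorphism of $\XX$ that preserves $X_0$.

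\emph{Step 1: strata are $\psi$-invariant.} Uniqueness of the minimal Whitney stratification compatible with $X_0$ means that any automorphism of $\XX$ preserving $X_0$ permutes its strata. The group $\KK^\times$ is connected and the strata are finitely many, so every stratum $S\in\mathcal S$ is invariant under $\psi_s$.

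\emph{Step 2: the strata over $\Ab^1\setminus\{0\}$.} Compatibility with $X_0$ gives either $S\subseteq X_0$ or $S\subseteq\XX\setminus X_0$. In the first case $\pi(S)=\{0\}$, which is a stratum of $\Ab^1$, and the restriction $\pi|_S$ is constant, hence trivially a submersion onto its image. In the second case $S\subseteq \XX\setminus X_0$ and $S$ is $\psi$-invariant. Under the trivialization from the first paragraph, the action $\psi_s$ becomes $(x,t)\mapsto(x,st)$. Therefore $S=S_1\times\KK^\times$ with $S_1=S\cap X_1$, and $S_1$ is smooth because $S$ is. It follows that $\pi|_S$ is the projection $S_1\times\KK^\times\to\KK^\times$, which is a submersion onto $\Ab^1\setminus\{0\}$.

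\emph{Step 3: the stratification of the line.} The partition $\{\Ab^1\setminus\{0\},\{0\}\}$ is a Whitney stratification of $\Ab^1$. The only pair of strata is a point in the closure of an open set, and conditions (a) and (b) hold trivially because the tangent space of the open stratum is the whole line. Together with Step 2, this verifies Definition~\ref{def: map strat}.

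The main point to check carefully is Step 1. We need the uniqueness of the minimal Whitney stratification compatible with $X_0$, which is stated in the paper, so that algebraic automorphisms preserving $X_0$ permute strata. We also need a connectedness argument showing that the permutation is trivial: the map $s\mapsto\psi_s(S)$ takes values in a finite set and is continuous in $s$. Everything else is the explicit trivialization of the family away from $t=0$.
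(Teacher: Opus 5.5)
Your proof is correct and follows the paper's argument: the equivariant trivialization $\XX\setminus X_0\cong X\times\KK^\times$, then uniqueness of the minimal Whitney stratification compatible with $X_0$ plus connectedness of $\KK^\times$ to force every stratum to be invariant, then the product form $S=S_1\times\KK^\times$ on which $\pi$ is a projection, and finally the trivial case of strata contained in $X_0$. Your explicit observation that the action becomes $(x,t)\mapsto(x,st)$ in trivialized coordinates, and your check that $\{\Ab^1\setminus\{0\},\{0\}\}$ is itself Whitney, are small clarifications the paper leaves implicit.
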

\begin{proof}
We have to show that for every stratum $S$ of $\mathcal S$, the map
$\pi|_S$ is a submersion onto the corresponding stratum of
$\{\mathbb A^1\setminus\{0\},\{0\}\}$. Since $\mathcal S$ is compatible with $X_0$, every stratum is either
contained in $X_0$ or disjoint from~$X_0$.
As in the proof of Proposition~\ref{prop: conormal deg}, one can show that a
Gröbner degeneration is equivariantly trivial over the punctured line
$\mathbb{A}^1 \setminus \{0\}$, meaning,
$
\mathcal X^\times:=\mathcal X\cap \pi^{-1}(\mathbb C^\times)
\simeq
X\times\mathbb C^\times
$.
Under this trivialization, the $\mathbb C^\times$-action is given by 
$$
\rho_\lambda: 
\mathcal{X} \to \mathcal{X},
\quad
(x,t)\mapsto (\lambda^\omega \cdot x,\lambda t),
\quad \lambda\in\mathbb C^\times.
$$
We claim that the automorphisms $\rho_\lambda$ preserve the restricted
stratification $\mathcal S|_{\mathcal X^\times}$. Indeed, viewing 
$\rho_\lambda$ as a diffeomorphism of the
underlying analytic space, the image of $\mathcal S$ is again a Whitney
stratification by~\cite[p.~11]{Gibson1976}. Since each $\rho_\lambda$ preserves the special fiber
$X_0$, the image stratification $\rho_\lambda(\mathcal{S})$ is again compatible with $X_0$. By
uniqueness of the minimal Whitney stratification compatible with $X_0$, the
action therefore permutes the strata of $\mathcal S|_\mathcal{X}^{\times}$. 

\smallskip

There are only
finitely many such strata and $\mathbb C^\times$ is connected, thus, each stratum $S\subset \mathcal X^\times$ is in
fact fixed, and therefore has the form
$
S=S_1\times \mathbb C^\times$, where
$S_1=S\cap (X\times\{1\})
$.
On such a stratum, the map $\pi$ is just the projection to $\mathbb C^\times$, therefore
$\pi|_S$ is a submersion.
The remaining strata are contained in $X_0$. They map to the point $\{0\}$, and the differential
to the tangent space of a point is automatically surjective. Thus $\pi$ is
a stratified submersion.
\end{proof}
\begin{remark}\label{rem: vertical strata}
A stratum of $\mathcal X$ will be called \emph{vertical} if it is contained
in a single fiber of~$\pi$. The proof of the lemma above shows that, for the chosen
minimal Whitney stratification of a Gr\"obner degeneration, the only vertical
strata are those contained in the special fiber $X_0$.
\end{remark}
The following result makes the source of the extra components precise: after
choosing a Whitney stratification of the map, compatible with the special
fiber, every irreducible component of the limiting conormal cycle comes from one of the
strata contained in the~special~fiber.
\begin{theorem}\label{thm: limit conormal strata}
Let $\XX\to\Ab^1$ be a flat Gröbner degeneration, and let
$\mathcal S$ be the minimal Whitney stratification of 
$\XX$ compatible with $X_0$. Then every irreducible component~$W$ of the reduced special fiber
$
(\mathcal N_{\XX/\Ab^1})_0^{\mathrm{red}}
$
is the conormal variety of a stratum contained in~$X_0$, i.e., 
$$\exists \, S\in\mathcal S, \quad \text{with }\; S\subseteq X_0, \quad  \text{such that } \;  W=\mathcal N_{\overline{S}}.$$
\end{theorem}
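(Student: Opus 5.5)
We first show that the special fiber is contained in a union of conormals of strata in $X_0$, and then use dimension to conclude. By Definition~\ref{def: rel conormal} and Remark~\ref{rem: no saturation by t}, a point $(x,H)\in(\mathcal N_{\XX/\Ab^1})_0$ is a limit of points $(x_n,H_n)$ with $x_n\in X_{t_n}^{\rm sm}$, $t_n\to 0$, and $T_{x_n}X_{t_n}\subseteq H_n$ (Remark~\ref{rk: GAGApple} lets us use sequences).

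\textbf{Step 1: pass to a single stratum.} By Lemma~\ref{lem: Whitney for Grobner} and Remark~\ref{rem: vertical strata}, the strata meeting $\mathcal X^\times$ are of the form $S_1\times\CC^\times$. The sets $\operatorname{Con}(X_t^{\rm sm})$ are dense in $(\mathcal N_{\XX/\Ab^1})_t$. Hence we may take $x_n$ in a fixed top-dimensional stratum $S_\alpha=X^{\rm sm}_{\rm gen}\times\CC^\times$ (after rescaling), where $T_{x_n}(S_\alpha\cap X_{t_n})=T_{x_n}X_{t_n}$. The limit $x$ lies in a unique stratum $S_\beta\subseteq X_0$.

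\textbf{Step 2: apply Thom's condition.} Pass to a subsequence so that $T_{x_n}X_{t_n}\to T$ in the Grassmannian. By the discussion preceding Lemma~\ref{lem: Whitney for Grobner} (Thom's isotopy lemma together with Briançon--Maisonobe--Merle), $\mathcal S$ satisfies Thom's $a_\pi$-condition. This gives
\[
T_x S_\beta = T_x(S_\beta\cap X_0)\subseteq T\subseteq H .
\]
Here $T\subseteq H$ holds because the incidence condition is closed. Therefore $(x,H)\in\operatorname{Con}(S_\beta)\subseteq\mathcal N_{\overline{S_\beta}}$. It follows that
\[
(\mathcal N_{\XX/\Ab^1})_0^{\rm red}\subseteq\bigcup_{S\subseteq X_0}\mathcal N_{\overline S}.
\]

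\textbf{Step 3: dimension count.} Each $\mathcal N_{\overline S}$ is irreducible of dimension $n-1$. The conormal of a smooth locally closed subvariety has this dimension, and its closure equals $\mathcal N_{\overline S}$. By flatness of the Gröbner degeneration (Proposition~\ref{prop: conormal deg}), the special fiber is pure of dimension $n-1$, since $\mathcal N_{\XX/\Ab^1}$ is irreducible of dimension $n$ and the special fiber is a Cartier divisor $t=0$ not containing it. An irreducible component $W$ of dimension $n-1$ lies in a finite union of irreducible $(n-1)$-dimensional varieties. So $W$ lies in one of them and equals it: $W=\mathcal N_{\overline S}$.

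\textbf{Main obstacle.} The delicate point is Step 1. The Thom condition must be applied to the full relative tangent space $T_{x_n}X_{t_n}$ at points of the general fiber. This requires the approximating points to lie in the open stratum, where the stratum's fiber tangent space coincides with $T_{x_n}X_{t_n}$. Points of $X_t^{\rm sm}$ in lower-dimensional non-vertical strata may have to be avoided by density. Concretely, we use that $\operatorname{Con}$ over the open dense stratum of $X_t$ is dense in $\mathcal N_{X_t}$, and that the $\CC^\times$-action identifies these open sets across $t$. If $X_t^{\rm sm}$ meets a smaller stratum $S_\gamma$, one can instead apply Thom's condition to the pair $(S_\gamma,S_\beta)$, using $T_{x_n}(S_\gamma\cap X_{t_n})\subseteq T_{x_n}X_{t_n}$. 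However, density is the cleaner route.
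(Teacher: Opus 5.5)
Your proof is correct and follows the paper's argument: you approximate by conormal points on nearby fibers, apply Thom's $a_\pi$-condition to get $T_xS_\beta\subseteq T\subseteq H$, and conclude by irreducibility plus the dimension count $n-1$ (your justification of purity via $t$ being a nonzerodivisor on the irreducible $n$-dimensional total space is slightly more explicit than the paper's). The only difference is in Step~1: the paper simply passes to a subsequence lying in a single stratum $S_\alpha$ and uses $T_{x_n}(S_\alpha\cap X_{t_n})\subseteq T_{x_n}X_{t_n}\subseteq H_n$, which is the alternative you mention at the end, so your density reduction to the open stratum is valid but not needed.
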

\begin{proof}
The maximal-dimensional strata of $X_0$ recover the ordinary conormal
variety $\mathcal N_{X_0}$, which is always 
in the limit by
Proposition~\ref{prop: conormal deg}. Thus, any extra component must come from a
lower-dimensional stratum. Take the conormal incidence over the \emph{relative smooth locus}
$$
S^\circ=
\{(x,H,t)\mid x\in (X_t)^{\rm sm},\ T_xX_t\subseteq H\}
\subset \PP^n\times(\PP^n)^\vee\times\Ab^1.
$$
By definition,
$
\mathcal N_{\XX/\Ab^1}=\overline{S^\circ}
$.
Let $W$ be an irreducible component of the special fiber 
$(\mathcal N_{\XX/\Ab^1})_0^{\rm red}$, and let
$
\eta=(x,H,0)\in W
$.
Consider the stratum  $S_\beta\subseteq X_0$ containing $x$. Since
$\eta\in\overline{S^\circ}$, by Remark \ref{rk: GAGApple} we may choose a sequence
$
(x_n,H_n,t_n)\in S^\circ
$
converging to $(x,H,0)$. Since the stratification is
finite, after passing to a subsequence we may assume that all $x_n$ lie in a
single stratum $S_\alpha$. Then
$
x\in\overline{S_\alpha}$, and thus
$S_\beta\subseteq\overline{S_\alpha}
$.
Along the latter sequence~we~have~the~inclusions
\begin{equation}\label{eq: relative tangent inclusion}
T_{x_n}\bigl(S_\alpha\cap X_{t_n}\bigr)
\subseteq
T_{x_n}X_{t_n}
\subseteq H_n.    
\end{equation}
After passing to a further subsequence, the tangent spaces
$T_{x_n}(S_\alpha\cap X_{t_n})$ converge in the corresponding Grassmannian to some linear space $T$. By Lemma~\ref{lem: Whitney for Grobner} together with the discussion preceding it, 
the chosen Whitney
stratification satisfies Thom's $a_\pi$-condition (in the sense of Definition \ref{def: thom}). Hence we have the inclusion
$
T_xS_\beta\subseteq T
$.
Passing to the limit in~\eqref{eq: relative tangent inclusion}
gives
$
T\subseteq H
$.
Therefore we have
$
T_xS_\beta\subseteq H
$
and hence $(x,H)$ lies in the component 
$\mathcal N_{\overline{S_\beta}}$.

\smallskip

We have shown that
$
W \subset \bigcup_{S\subseteq X_0}\mathcal N_{\overline S}
$. Since $W$ is irreducible, we have
$
W\subseteq \mathcal N_{\overline S}
$
for some stratum $S\subseteq X_0$. Finally, both varieties have dimension
$n-1$: the special fiber has 
dimension $n-1$ by flatness of the
conormal degeneration, and $\mathcal N_{\overline S}$ is the conormal
variety of an irreducible projective variety in $\PP^n$. Thus,
$
W=\mathcal N_{\overline S}
$, which completes the proof.
\end{proof}
\begin{remark}
The statement of Theorem~\ref{thm: limit conormal strata} is not special to
Gr\"obner degenerations. The same proof applies to any flat family
$\pi:\mathcal X\to\Ab^1$ of projective varieties, provided that one fixes a Whitney stratification of
the map $\pi$ (as in Definition~\ref{def: map strat}), compatible with~$X_0$. Notice that in this more general case, contrary to Lemma \ref{lem: Whitney for Grobner}, not every stratification of $\mathcal X$ compatible with $X_0$ will give a stratification of the map $\pi$.
\end{remark}
\begin{example}\label{ex: whitneyUmb revisit}
We revisit the degeneration of the Whitney umbrella from Example~\ref{ex: umbrella}. The minimal Whitney stratification of the family~${\cal X}$, compatible with $X_0$, was computed using the package \texttt{WhitneyStratifications} in \texttt{Macaulay2}. Its singular strata are given by
$$
\begin{aligned}
\dim 1:
& \quad 
L\coloneq \mathcal V(t, y_0,y_1),\\
\dim 0:
& \quad 
p_1 \coloneq \mathcal V(t, y_0,y_1,y_2), \;  p_2 \coloneq \mathcal V(t, y_0,y_1,y_3),\; \{q_1,q_2,q_3\} \coloneq \mathcal V(t, y_0,y_1,y_2^3+y_3^3).
\end{aligned}
$$
The one-dimensional stratum is the line 
$\operatorname{Sing}(X_0)=\mathcal V(t,y_0,y_1)$. The first two
points $p_1,p_2$ are the usual Whitney strata of the Whitney
umbrella. The last ideal defines three additional points $q_1,q_2,q_3$ on the singular line,
which arise as singularities of the total family ${\cal X}$ at $t=0$.

\smallskip

The line
$\mathcal V(t,y_0,y_1)$ is dual defective, and hence it does not contribute 
to the limiting dual~\eqref{eq: limit Whitney}. The five zero-dimensional strata contribute exactly the 
extra factors in~\eqref{eq: limit Whitney}: 
$$
p_1^\vee
=\mathcal V(z_3), \quad 
p_2^\vee
=\mathcal V(z_2),\quad 
\mathcal V(t,y_0,y_1,y_2^3+y_3^3)^\vee
=\mathcal V(z_3^3-z_2^3).
$$
Over $\CC$, one can factor the last cubic  into three linear forms defining $q_1^\vee,\; q_2^\vee,\; q_3^\vee$.
\hfill~$\diamond$  \end{example}
Theorem~\ref{thm: limit conormal strata} provides a finite list of candidates
for the irreducible components of the limiting conormal cycle. However,
not every such candidate has to occur. In other words, the converse to Theorem~\ref{thm: limit conormal strata} is false: not every
stratum $S\subseteq X_0$ of a Whitney stratification compatible with $\pi$ has to contribute a component ${\cal N}_{\overline{S}}$
to $(\mathcal{N}_{\mathcal{X}/\Ab^1})_0$, even if the stratification is chosen to be minimal. 
The following example, although reducible, illustrates this.
\begin{example}\label{ex: skew lines}
Consider $I_1 = (x,y)$ and $I_{2,t} = (z,y-tw)$.
Let
$
\XX={\cal V}\bigl(I_1 \cdot I_{2,t}\bigr)\subset \PP^3\times\Ab^1_t 
$
be a family with general fiber equal to the union of the two skew lines
$
L_1={\cal V}(I_1)
$
and
$
L_{2,t}={\cal V}(I_{2,t}),
$
and with special fiber equal to the union of the intersecting lines
$$
X_0=L_1\cup L_2,\quad
L_1={\cal V}\bigl(I_1\bigr), \quad
L_{2,0}={\cal V}\bigl(I_{2,0}\bigr),
$$
with a fat point $L_1\cap L_2=\{p\}$. The minimal Whitney stratification
of $\mathcal X$ restricted to~$X_0$~is
$$
L_1\setminus\{p\},\quad L_2\setminus\{p\},\quad \{p\}.
$$
However,
$
\bigl(\mathcal N_{\XX/\Ab^1}\bigr)_0
=
\mathcal N_{L_1}\cup\mathcal N_{L_2},
$
so the component $\mathcal N_{\{p\}}$ does not occur.
\hfill~$\diamond$  
\end{example}

In the preceding example the special fiber is non-reduced. This 
is not responsible for the failure of the converse to
Theorem~\ref{thm: limit conormal strata}: the same phenomenon can occur even
when the special fiber is reduced. The following example is adapted from
\cite[Remark~7.2]{GrigorievMilman2008}.
\begin{example}\label{ex: nonpersistent point}
Consider the flat family $\mathcal X=\mathcal V(F)\subset \PP^5_{x,y,u,v,w,z}\times\Ab^1_t$ of hypersurfaces with $$F=z(ux^2+vy^2)+2w^2xy+t z^4.$$
This is a Gröbner degeneration with respect to $\omega=(1,2,2,0,1,1)$. On the affine chart $z=1$, the restriction to $X_0$ of the minimal Whitney
stratification of the total family $\mathcal X$ has strata
$$
X_0\setminus L,\quad
L\setminus \mathcal V_L(\Delta),\quad
\mathcal V_L(\Delta)\setminus\{p\},\quad
\{p\}.
$$
Here,
$
L=\mathcal V(t,x,y)\simeq \Ab^3_{u,v,w}
$
is the singular locus of the special fiber $X_0$ on the affine~chart $z=1$, and
$
\Delta=uv-w^4
$
is the discriminant of the quadratic form
$
ux^2+2w^2xy+vy^2
$.
Finally, 
$$
p=[0:0:0:0:0:1]\in X_0
$$
is the singular point of the discriminant hypersurface
$\mathcal V_L(\Delta)\subset L$.
We now show that $\{p\}$ does not contribute to the limiting
conormal cycle. 
On the smooth locus of the fibers, conormal vectors on a hypersurface are given by
the gradient. Let $F_0 \coloneq F|_{t=0}$. On the chart $z=1$, 
$$
\nabla_{x,y,u,v,w} F_0
=
(2ux+2w^2y,\;2vy+2w^2x,\;x^2,\;y^2,\;4wxy).
$$
Let
$
M=\max_i|\partial_i F_0|
$.
Then
$
|4wxy|
\leq 2|w|(|x|^2+|y|^2)
=o(M),
$
since $M\geq \max\{|x|^2,|y|^2\}$. Moreover, on the nearby fibers of $\mathcal X$ we have
$t=-F_0$ on the chart $z=1$, and hence
$$
\partial_zF
=
ux^2+vy^2+4t
=
ux^2+vy^2-4F_0.
$$
We show that $|\partial_zF| = o(M)$. Indeed, by Euler's identity in the variables
$x$ and $y$, we have
$$
F_0
=
\frac12\left(
x\frac{\partial F_0}{\partial x}
+
y\frac{\partial F_0}{\partial y}
\right),
$$
and hence
$
|F_0|\leq \frac12(|x|+|y|)M=o(M)
$
as the point tends to $p$. Moreover,
$$
|ux^2+vy^2|
=
|F_0-2w^2xy|
\leq |F_0|+2|w|^2|xy| \leq |F_0| + |w|^2(|x|^2+|y|^2) = o(M),
$$
and consequently
$
|\partial_zF|
=o(M)
$. 
Therefore every limiting conormal vector over the point $p$ satisfies
$
\xi_w=\xi_z=0
$, where $\xi$ are the dual coordinates on $(\PP^5)^\vee$.
Thus, we have 
$$
\bigl(\mathcal N_{\mathcal X/\Ab^1}\bigr)_0
\cap
\bigl(\{p\}\times(\PP^5)^\vee\bigr)
\subseteq
\{p\}\times\mathcal V(\xi_w,\xi_z) \simeq \{p\} \times \PP^3.
$$
On the other hand, the conormal variety of the point in $\PP^5$ is 
$
\mathcal N_{\{p\}}
=
\{p\}\times\mathcal V(\xi_z)
\simeq \PP^4
$.
Hence $\mathcal N_{\{p\}}$ is not an irreducible component of the reduced special fiber $\bigl(\mathcal N_{\mathcal X/\Ab^1}\bigr)^{\rm red}_0$.
\hfill~$\diamond$  \end{example}
\subsection*{Multiplicities in the limiting conormal cycle}\label{subsec: mult}
Theorem~\ref{thm: limit conormal strata} describes the reduced specialization
of the relative conormal space~\eqref{eq: relative conormal}. Here we  recall
Sabbah's result~\cite{Sabbah1985}, presented in
Kleiman~\cite{Kleiman}, which governs the multiplicities of its irreducible
components.
Let $\mathcal S$ be a Whitney stratification of the flat map
$\pi:\mathcal X\to\Ab^1$, compatible with the special fiber $X_0$, and set
$
\mathcal S_0=\{S\in\mathcal S\mid S\subseteq X_0\}
$.
This is a finite set of strata. By Theorem~\ref{thm: limit conormal strata}, the
irreducible components of
$
(\mathcal N_{\mathcal X/\Ab^1})_0^{\rm red}
$
are among the conormal varieties
$
\mathcal N_{\overline S}
$
with $S\in\mathcal S_0$. Hence, as in~\eqref{eq: alg cycle}, we may write the corresponding algebraic cycle~as
\begin{equation}\label{eq: conormal as a cycle}
    \bigl[(\mathcal N_{\mathcal X/\Ab^1})_0\bigr]
=
\sum_{S\in\mathcal S_0}
m_S\,[\mathcal N_{\overline S}],
\end{equation}
where $m_S\in\mathbb Z_{\geq0}$ is the multiplicity of
$\mathcal N_{\overline S}$. If
$\mathcal N_{\overline S}$ is not an irreducible component, then~${m_S=0}$.

\smallskip

For $x\in X_0$, let $F_x$ denote the \emph{local Milnor fiber} (\cite[Sec.~4.2]{Dimca2004}) of $\pi$ at $x$ defined~as 
$$
F_x
\coloneqq
B_\delta^\circ(x)\cap \mathcal X \cap \pi^{-1}(t)
=
B_\delta^\circ(x)\cap X_t, \quad \text{for } \; 0<|t|\ll\delta\ll1.
$$
where $B_\delta^\circ(x)$ is the open
ball of radius $\delta$ centered at 
$(x,0)$ in a local coordinate chart of~$\PP^n\times\CC$. 

\smallskip

Recall that a constructible function on a complex algebraic variety $X$ is
a function $\alpha:X\to\mathbb Z$ which is constant on the strata of some
finite algebraic stratification of $X$. We write
$\operatorname{Eu}_{\mathcal X}$ for the local Euler obstruction
\cite{MacPherson1974} of the total space $\mathcal X$. It is
integer-valued, constant along Whitney strata for any Whitney stratification, and equal to $1$ on the
smooth locus; see, for instance, \cite{Dimca2004}. In particular,
$\operatorname{Eu}_{\mathcal X}$ restricts to a constructible function on
the local Milnor fiber $F_x$. We denote by
$\chi(F_x,\operatorname{Eu}_{\mathcal X})$ the Euler characteristic of
$F_x$ weighted by this function. Explicitly, if
$F_x=\bigsqcup_\alpha U_\alpha$ is some  Whitney stratification such that
$\operatorname{Eu}_{\mathcal X}=c_\alpha$ on $U_\alpha$, then it is given~by
$$
\chi(F_x,\operatorname{Eu}_{\mathcal X})
\coloneqq
\sum_\alpha c_\alpha\,\chi(U_\alpha).
$$
\begin{theorem}[Sabbah, see {\cite{Kleiman}}]\label{thm: sabbah mult}
Let $d=\dim X_0$, and choose one point $x_S\in S$ for every
$S\in\mathcal S_0$. The multiplicities $m_S$ in
\eqref{eq: conormal as a cycle} are determined by the triangular system
\begin{equation}\label{eq: sabbah}
\chi(F_{x_S},\operatorname{Eu}_{\mathcal X})
=
(-1)^d
\sum_{\substack{T\in\mathcal S_0, \;  S\subseteq \overline T}}
(-1)^{\dim T}\,
m_T\,\operatorname{Eu}_{\overline T}(x_S),
\qquad S\in\mathcal S_0 .
\end{equation}
Here, $\operatorname{Eu}_{\overline T}(x_S)$ denotes the local Euler
obstruction of $\overline T$ at $x_S$.
\end{theorem}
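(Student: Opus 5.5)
We will derive \eqref{eq: sabbah} from the calculus of constructible functions and Lagrangian cycles, comparing two constructible functions on $X_0$: the nearby-cycle function $x\mapsto \chi(F_x,\operatorname{Eu}_{\mathcal X})$, and the function attached, via the inverse characteristic cycle map, to the limiting Lagrangian cycle $\bigl[(\mathcal N_{\mathcal X/\Ab^1})_0\bigr]$. We work with conic Lagrangian cycles in $T^*\PP^n$ (affine cones over the conormal varieties in $\PP^n\times(\PP^n)^\vee$), so that $\mathcal N_{\overline S}$ corresponds to the closed conormal $T^*_{S}\PP^n$. The dictionary we use is the classical one of Kashiwara, MacPherson and Brylinski–Dubson–Kashiwara: the characteristic cycle map $\operatorname{CC}$ from constructible functions to Lagrangian cycles is an isomorphism, and
\[
\operatorname{CC}\bigl((-1)^{\dim T}\operatorname{Eu}_{\overline T}\bigr)=[T^*_{T}\PP^n].
\]

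The first step identifies the limiting cycle. The generic fiber $X_t$ is irreducible of dimension $d$, and $\operatorname{Eu}_{\mathcal X}|_{X_t}=\operatorname{Eu}_{X_t}$ for $t\neq0$, because $\mathcal X^\times\simeq X\times\CC^\times$ by the proof of Lemma~\ref{lem: Whitney for Grobner}. Hence
\[
(-1)^d\operatorname{CC}(\operatorname{Eu}_{\mathcal X}|_{X_t})=[\mathcal N_{X_t}]=\bigl[(\mathcal N_{\mathcal X/\Ab^1})_t\bigr]
\]
by Proposition~\ref{prop: conormal deg}. We then invoke Sabbah's specialization theorem: for a function $\alpha$ on $\mathcal X$ that is constructible with respect to a stratification satisfying Thom's $a_\pi$-condition, the specialization of the relative Lagrangian cycle $\operatorname{CC}(\alpha|_{X_t})$ as $t\to0$ equals $\operatorname{CC}(\psi_\pi\alpha)$. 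Here $\psi_\pi\alpha(x)=\chi(F_x,\alpha)$ is the nearby-cycle constructible function. The required $a_\pi$-condition holds for $\mathcal S$ by Lemma~\ref{lem: Whitney for Grobner} and the Briançon–Maisonobe–Merle discussion preceding it. Flatness of the conormal Gröbner degeneration ensures that the specialized cycle is exactly the fundamental cycle of the flat limit, that is, the left side of \eqref{eq: conormal as a cycle}.

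The second step is bookkeeping. We have
\[
(-1)^d\operatorname{CC}(\psi_\pi\operatorname{Eu}_{\mathcal X})=\sum_{T\in\mathcal S_0} m_T[\mathcal N_{\overline T}]=\operatorname{CC}\Bigl(\sum_T m_T(-1)^{\dim T}\operatorname{Eu}_{\overline T}\Bigr),
\]
where the sum ranges over $\mathcal S_0$ by Theorem~\ref{thm: limit conormal strata}. Injectivity of $\operatorname{CC}$ gives equality of the constructible functions. Evaluating at $x_S$, and using that $\operatorname{Eu}_{\overline T}(x_S)=0$ unless $S\subseteq\overline T$, yields \eqref{eq: sabbah}. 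The system is triangular with respect to the closure order, with diagonal entries $(-1)^{d+\dim S}\operatorname{Eu}_{\overline S}(x_S)=\pm1$, so it determines the $m_S$ uniquely by induction from the top-dimensional strata downward. Independence of the choice of $x_S$ follows because both sides are constant on strata: Milnor fibers are constant along strata by the Thom isotopy argument, and Euler obstructions are constant along Whitney strata.

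The main obstacle is the specialization step. One must verify that Sabbah's theorem, which is stated analytically for relative conormal spaces, applies with the scheme-theoretic flat limit. Two points need care: that the relative characteristic cycle is flat over $\Ab^1$ near $t=0$, with no embedded or vertical components contributing, and the sign conventions. For the first, we plan to rely on the Gröbner flatness from Proposition~\ref{prop: conormal deg} and on Remark~\ref{rem: no saturation by t}; for the second, on Kleiman's exposition.
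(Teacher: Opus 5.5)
Your proposal is correct. It rests on the same input as the paper's proof, namely Sabbah's specialization theorem, but it packages that input differently.

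\textbf{How the paper argues.} The paper quotes the pointwise identity $\chi(F_x,\operatorname{Eu}_{\mathcal X})=(-1)^d\sum_T(-1)^{\dim T}m_T\operatorname{Eu}_{\overline T}(x)$ for all $x\in X_0$ directly from Kleiman's presentation. It then only observes that it suffices to evaluate at one point per stratum, and that the terms with $x_S\notin\overline T$ vanish.

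\textbf{How you argue.} You start instead from the characteristic-cycle form: the specialization of $\operatorname{CC}(\alpha|_{X_t})$ equals $\operatorname{CC}(\psi_\pi\alpha)$. You normalize at $t\neq0$ via $\operatorname{Eu}_{\mathcal X}|_{X_t}=\operatorname{Eu}_{X_t}$, which you correctly deduce from $\mathcal X^\times\simeq X\times\CC^\times$. You then invert $\operatorname{CC}$ using $\operatorname{CC}((-1)^{\dim T}\operatorname{Eu}_{\overline T})=[T^*_T\PP^n]$.

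\textbf{What your route buys.} It explains where the signs $(-1)^d$ and $(-1)^{\dim T}$ come from. Your remark that the diagonal coefficients are $\pm1$, because $\operatorname{Eu}_{\overline S}\equiv 1$ on the open stratum $S$ of $\overline S$, justifies the word \emph{determined} in the statement; the paper leaves this implicit. Constancy of the left-hand side along strata then follows from the identity itself, so no separate isotopy argument is needed.

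\textbf{What it costs.} You need the extra dictionary between conic Lagrangian cycles in $T^*\PP^n$ and cycles in $\PP^n\times(\PP^n)^\vee$. This goes through $\mathbb P(T^*\PP^n)$, so the Lagrangians are fiberwise cones over $\mathcal N_{\overline T}$, not affine cones. The dictionary preserves the specialized multiplicities: a component of the limit lying in the zero section over $X_0$ would have to be the zero section over an $n$-dimensional subset of $X_0$, which is impossible since $\dim X_0=d<n$.

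\textbf{On the obstacle you flag.} It is milder than it looks. The specialization of a cycle over a curve is, by definition, the fundamental cycle of the special fiber of the closure of the family over $t\neq0$. So it coincides with $[(\mathcal N_{\mathcal X/\Ab^1})_0]$ automatically, with no appeal to Gröbner flatness, and embedded components never affect fundamental cycles.
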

\begin{proof}
Sabbah's formula, presented in~\cite{Kleiman}, gives the 
pointwise identity for every $x\in X_0$. Since local Euler obstructions
are constant along Whitney strata, it is enough to evaluate it at the
chosen points $x_S$. The terms with $x_S\notin\overline T$ vanish,
leaving the triangular system~\eqref{eq: sabbah}.
\end{proof}
\begin{corollary}\label{cor: top components multiplicity}
Assume that $X_0$ is reduced. Then every top-dimensional
irreducible component of $X_0$ contributes its conormal variety  to the limit
$[({\cal N}_{\mathcal X/\mathbb A^1})_0]$ with multiplicity one.
\end{corollary}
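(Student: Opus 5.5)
We plan to apply Sabbah's triangular system (Theorem~\ref{thm: sabbah mult}) at a general point of a top-dimensional component. Let $Y$ be an irreducible component of $X_0$ with $\dim Y=d$. Since $X_0$ is reduced, a dense open subset of $Y$ consists of points that lie on no other component and are smooth points of $X_0$. The stratum $S\in\mathcal S_0$ whose closure is $Y$ is therefore open dense in $Y$. It consists of such points, possibly after shrinking; since $\mathcal S_0$ refines the singular locus of the relevant spaces, the stratum containing a general point of $Y$ is open in $Y$. Choose $x_S\in S$ general.

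The first step is to analyze the right-hand side of \eqref{eq: sabbah} at $x_S$. The strata $T\in\mathcal S_0$ with $S\subseteq\overline T$ must satisfy $\dim T\ge d$. Since $T\subseteq X_0$ has dimension at most $d$, this forces $\overline T=Y$, hence $T=S$. The sum therefore reduces to the single term $(-1)^d(-1)^d m_S\operatorname{Eu}_{Y}(x_S)$. The point $x_S$ is a smooth point of $Y$, so $\operatorname{Eu}_Y(x_S)=1$, and the right-hand side equals $m_S$.

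The second step, which is the main content, is to show that $\chi(F_{x_S},\operatorname{Eu}_{\mathcal X})=1$. Because $X_0$ is reduced and smooth at $x_S$, the defining ideal $\mathcal I$ satisfies the following: the fiber $X_0=\mathcal V(\mathcal I+(t))$ is a reduced smooth scheme of dimension $d$ at $x_S$. The Jacobian of generators of $\mathcal I$ with respect to $y$ then has rank $c$ at $(x_S,0)$. By the Jacobian criterion, $\mathcal X$ is smooth of dimension $d+1$ at $(x_S,0)$, and $\pi$ is a submersion there. This is where reducedness is used: the scheme-theoretic fiber, not just its support, must be smooth. Hence $\operatorname{Eu}_{\mathcal X}\equiv 1$ near $x_S$. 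By the holomorphic implicit function theorem, $\pi$ is locally a product near $(x_S,0)$, so the Milnor fiber $F_{x_S}$ is a small ball in $\CC^d$, which is contractible. We conclude that $\chi(F_{x_S},\operatorname{Eu}_{\mathcal X})=\chi(F_{x_S})=1$.

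Comparing the two sides gives $m_S=1$. Since $\mathcal N_{\overline S}=\mathcal N_Y$, this is the multiplicity of $\mathcal N_Y$ in $[(\mathcal N_{\mathcal X/\Ab^1})_0]$, which proves the claim. The main obstacle we anticipate is justifying that the stratum through a general point of $Y$ is open in $Y$ and consists of points where $\mathcal X$ is smooth. This follows from minimality of the Whitney stratification together with the smoothness established in the second step: on the open set of smooth points of $\mathcal X$ lying in $X_0$, the partition into $X_0$ and its complement is already Whitney regular.
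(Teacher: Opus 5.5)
Your proposal is correct and follows the same route as the paper. You evaluate Sabbah's system at a point $x_S$ of the dense stratum of a top-dimensional component, note that only $T=S$ contributes and that $\operatorname{Eu}_{\overline S}(x_S)=1$, and use smoothness of $\pi$ at $x_S$ to get $\chi(F_{x_S},\operatorname{Eu}_{\mathcal X})=1$, hence $m_S=1$.

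Your Jacobian-criterion argument spells out something the paper leaves implicit: reducedness of $X_0$ is exactly what makes $\pi$ smooth at $x_S$. The obstacle you anticipate at the end does not need the minimality argument, since Sabbah's identity holds pointwise. It is enough to pick $x_S$ in the intersection of the open dense stratum with the open dense smooth locus of $Y$.
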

\begin{proof}
Take $x\in S\subseteq X_0^{\rm sm}$. Since $\pi$ is smooth near $x$, the local Milnor fiber $F_x$ is a
ball with~$\chi(F_x,\operatorname{Eu}_{\mathcal X})=1$. The only stratum
$T$ with $S\subseteq\overline T$ is $T=S$, and
$\operatorname{Eu}_{\overline S}(x)=1$. Thus, 
$$
1=(-1)^d \cdot (-1)^d m_S=m_S
$$
by Sabbah's formula~\eqref{eq: sabbah}, which proves the statement.
\end{proof}
\begin{remark}
    Sabbah's formula~\eqref{eq: sabbah} does not imply Theorem~\ref{thm: limit conormal strata}.
Rather, it is most useful in combination with that theorem: Theorem~\ref{thm: limit conormal strata}
provides a finite list of candidate  components in the limiting conormal cycle, and Sabbah's
formula then determines their~multiplicities. In particular, for
a candidate ${\cal N}_{\overline{S}}$ that does not  occur,~\eqref{eq: sabbah}
gives $m_S=0$, as in Examples~\ref{ex: skew lines},~\ref{ex: nonpersistent point}.
\end{remark}
\begin{corollary}\label{cor: smooth general fibers}
Assume, in the setting of Theorem~\ref{thm: sabbah mult}, that general fibers $X_t$ for $t\neq 0$ are smooth. Let $\chi(F_x)$ be the Euler characteristic of the Milnor fiber $F_x$. Then~\eqref{eq: sabbah} becomes
\begin{equation}\label{eq: sabbah new}
\chi(F_x)
=
(-1)^d \sum_{T\in\mathcal S_0, \,  S\subseteq \overline T} (-1)^{\dim T} m_T\, \operatorname{Eu}_{\overline T}(x),
\quad x\in S.    
\end{equation} 
\end{corollary}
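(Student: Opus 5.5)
The plan is to specialize Theorem~\ref{thm: sabbah mult} and show that, when the general fibers are smooth, the weighted Euler characteristic $\chi(F_x,\operatorname{Eu}_{\mathcal X})$ equals the ordinary Euler characteristic $\chi(F_x)$. The right-hand side of \eqref{eq: sabbah new} is exactly the right-hand side of \eqref{eq: sabbah}. The only difference is that \eqref{eq: sabbah new} is stated for every $x\in S$ rather than for a chosen point $x_S$. Both sides are constant along $S$: the right-hand side because local Euler obstructions are constant along Whitney strata, and the left-hand side because the topological type of the Milnor fiber is constant along strata of a Whitney stratification of the map satisfying Thom's $a_\pi$-condition. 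The latter follows from Thom's isotopy lemma, as invoked in the discussion before Lemma~\ref{lem: Whitney for Grobner}. So it suffices to compare the left-hand sides.

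The key step is the following. By definition, $F_x=B_\delta^\circ(x)\cap X_t$ with $0<|t|\ll\delta$, so $F_x\subseteq \mathcal X^\times=\pi^{-1}(\Ab^1\setminus\{0\})$. I claim that $\mathcal X$ is smooth at every point of $\mathcal X^\times$. Since $X_t$ is smooth and $\pi$ is flat, $\pi$ restricted to $\mathcal X^\times$ is a flat morphism with smooth fibers, hence a smooth morphism to the smooth curve $\Ab^1\setminus\{0\}$. Its total space is therefore smooth, of dimension $\dim X_t+1$. In the Gröbner case one can see this even more directly from the trivialization $\mathcal X^\times\simeq X\times\CC^\times$ used in Lemma~\ref{lem: Whitney for Grobner}.

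Since $\operatorname{Eu}_{\mathcal X}$ equals $1$ on the smooth locus, it is identically $1$ on $F_x$. Hence $\chi(F_x,\operatorname{Eu}_{\mathcal X})=\sum_\alpha \chi(U_\alpha)=\chi(F_x)$, by additivity of the Euler characteristic over a stratification; this is the compactly supported Euler characteristic, which agrees with the usual one for complex constructible sets. Substituting into \eqref{eq: sabbah} gives \eqref{eq: sabbah new}.

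The main obstacle is only the smoothness of $\mathcal X$ off the special fiber, in particular making sure that "smooth general fibers" means smooth as schemes, so that the fibration argument applies. Everything else is bookkeeping.
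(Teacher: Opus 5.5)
Your proposal is correct and follows the paper's argument: $F_x$ lies in a smooth nearby fiber $X_t$, so it lies in the smooth locus of $\mathcal X$, where $\operatorname{Eu}_{\mathcal X}=1$. Hence the weighted Euler characteristic reduces to $\chi(F_x)$. Your observation that flatness plus smooth fibers makes $\mathcal X^\times$ smooth fills in a step the paper leaves implicit, while your constancy-along-strata argument is unnecessary, since Sabbah's identity already holds pointwise at every $x\in X_0$.
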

\begin{proof}
The local Milnor fiber
$
F_x=B_\delta^\circ(x)\cap \mathcal X \cap \pi^{-1}(t)
$
is contained in the nearby fiber~$X_t$ for some sufficiently small
$t \neq0$. By assumption, the fiber $X_t$ is smooth and hence every point of~$F_x$
lies in the smooth locus of $\mathcal X$, where
the local Euler obstruction ${\rm Eu}_{\mathcal X}$ is equal to $1$.
\end{proof}
The next result translates the multiplicities in the limiting conormal
cycle into the exponents of the irreducible factors  in the limit of the
dual hypersurface, as in~Example~\ref{ex: umbrella}.
\begin{proposition}\label{prop: multiplicities in dual}
Let $\pi:\mathcal X\to\Ab^1$ be a flat family with 
non-defective general fibers $X_t$ for $t\neq0$. 
Consider the corresponding degeneration of the dual
hypersurfaces $(X^\vee)_t$. Then, 
\begin{equation}\label{eq: dual as a cycle}
[(X^\vee)_0]
=
\sum_{\substack{S\in\mathcal S_0\\ \overline S\ \text{is non-defective}}}
m_S\,[\, (\overline{S})^\vee],    
\end{equation}
where the multiplicities $m_S$ are computed by Sabbah's formula~\eqref{eq: sabbah}.  
\end{proposition}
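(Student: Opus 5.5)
Our plan is to push the cycle identity \eqref{eq: conormal as a cycle} forward along the second projection $\pr_2\colon \PP^n\times(\PP^n)^\vee\times\Ab^1\to(\PP^n)^\vee\times\Ab^1$, using that proper pushforward of cycles commutes with specialization to the special fiber.

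\emph{Step 1: the dual family as a pushforward.} Consider the relative conormal space $\mathcal N_{\mathcal X/\Ab^1}$ and its image $\mathcal Y:=\pr_2(\mathcal N_{\mathcal X/\Ab^1})\subseteq(\PP^n)^\vee\times\Ab^1$. Over $t\neq0$ the fiber is $\mathcal N_{X_t}$ by Proposition~\ref{prop: conormal deg} (or by definition for a general flat family). Its image is the dual hypersurface $(X_t)^\vee$, and by non-defectiveness the map $\mathcal N_{X_t}\to (X_t)^\vee$ is birational, i.e.\ generically of degree one; this is reflexivity in characteristic zero. Hence $\pr_{2*}[\mathcal N_{\mathcal X/\Ab^1}]=[\mathcal Y]$ as $n$-cycles, and $\mathcal Y$ is the flat closure of the family $(X^\vee)_t$. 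Since $\mathcal Y$ is irreducible and dominates $\Ab^1$, it is flat over $\Ab^1$, and its special fiber is $(X^\vee)_0$.

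\emph{Step 2: commute specialization with pushforward.} By Fulton's compatibility of proper pushforward with specialization (Fulton, Prop.~10.1(a)), or equivalently by intersecting with the Cartier divisor $\{t=0\}$ together with the projection formula, we get
\[
[(X^\vee)_0]=\pr_{2*}\bigl[(\mathcal N_{\mathcal X/\Ab^1})_0\bigr]=\sum_{S\in\mathcal S_0} m_S\,\pr_{2*}[\mathcal N_{\overline S}],
\]
using \eqref{eq: conormal as a cycle}.

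\emph{Step 3: evaluate each pushforward.} Each $\mathcal N_{\overline S}$ is irreducible of dimension $n-1$, with image $(\overline S)^\vee$. If $\overline S$ is defective, the image has dimension less than $n-1$, so the pushforward is $0$. If $\overline S$ is non-defective, the map $\mathcal N_{\overline S}\to(\overline S)^\vee$ is birational by reflexivity, so $\pr_{2*}[\mathcal N_{\overline S}]=[(\overline S)^\vee]$. Substituting gives \eqref{eq: dual as a cycle}, and the values $m_S$ come from Theorem~\ref{thm: sabbah mult}.

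The main point needing care is Step 2. We must check that the special fiber of the closure $\mathcal Y$ coincides scheme-theoretically, as a cycle, with the flat limit $(X^\vee)_0$, and that the specialization of $[\mathcal N_{\mathcal X/\Ab^1}]$ is exactly $[(\mathcal N_{\mathcal X/\Ab^1})_0]$. Both hold because $\mathcal N_{\mathcal X/\Ab^1}$ and $\mathcal Y$ are integral and dominate $\Ab^1$, hence are flat over the smooth curve $\Ab^1$. Then $t$ is a nonzerodivisor, and the special fiber is the intersection with the principal Cartier divisor $\{t=0\}$, which commutes with proper pushforward.
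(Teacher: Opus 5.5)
Your proposal is correct and follows the paper's route: you push the cycle \eqref{eq: conormal as a cycle} forward along $\pr_2$, the defective strata contribute zero, and each non-defective $\mathcal N_{\overline S}\to(\overline S)^\vee$ is birational. You are more careful than the paper in two places: you justify explicitly that specialization commutes with $\pr_{2*}$, and you verify that the total-space map has degree one. One point to make explicit there is that Fulton's statement holds in $A_{n-1}$ of the special fiber. That group equals the cycle group only because the special fiber is pure of dimension $n-1$. The paper, for its part, also invokes biduality to note that distinct strata have distinct duals, so the $m_S$ are exactly the exponents of the irreducible factors of the limit; the formula as a sum over $S$ does not need this.
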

\begin{proof}
    We need to argue that the multiplicities are preserved after projecting 
$
(\mathcal N_{\mathcal X/\Ab^1})_0
$
to the dual projective space $(\PP^n)^\vee$. In the language of algebraic cycles, this is
the pushforward of the cycle~\eqref{eq: conormal as a cycle} under the second 
projection ${\rm pr}_2$. By definition, $({\rm pr}_2)_*[\mathcal{N_{\overline{S}}}] = 0$ when the map is not generically finite, that is, $\overline S$ is
dual defective. If $\overline S$ is non-defective, then it contributes
$$
m_S \cdot \deg({\rm pr}_2) \, [\, (\overline{S})^\vee].
$$
By \cite[Ch.\,1, Proposition 3.2]{Gelfand1994}, the projection ${\rm pr}_2$ is birational on every component~${\cal N}_{\overline S}$. Moreover, by biduality, if $(\overline{S}_1)^\vee = (\overline{S}_2)^\vee$, then $\overline{S}_1 = \overline{S}_2$, so two distinct non-defective irreducible strata cannot have
the same dual hypersurface. Thus, $\deg(\pr_2)=1$, and no multiplicities have to be combined. Pushing
forward the cycle~\eqref{eq: conormal as a cycle} gives the claimed formula.
\end{proof}
\begin{example}
We revisit Example~\ref{ex: umbrella} once more. Since the nearby fibers are
smooth, Corollary~\ref{cor: smooth general fibers} allows us to use ordinary
Euler characteristics of local Milnor fibers.
We use the stratification from Example~\ref{ex: whitneyUmb revisit}. The values needed in Sabbah's formula~\eqref{eq: sabbah new} are
\begin{equation}\label{tab: EuObs}
 \begin{array}{|c|c|c|c|c|c|}
\toprule  
\text{stratum $S$ containing }x
&
{\rm Eu}_{X_0}(x) & {\rm Eu}_{L}(x) & {\rm Eu}_{\{p_i\}}(x) & {\rm Eu}_{\{q_j\}}(x)
&
\chi(F_x)
\\
\midrule
X_0^{\rm sm} & 1 & 0 & 0 & 0 & 1\\
L\setminus \{p_i, q_j\} & 2 & 1 & 0 & 0 & 0\\
\{p_1, p_2\} & 1 & 1 & 1 & 0 & 2\\
\{q_1,q_2,q_3\} & 2 & 1 & 0 & 1 & 1 \\
\bottomrule
\end{array}   
\end{equation} 
The local Euler obstruction values  can be computed  using the
command \texttt{eulerObsMatrix} in \texttt{Macaulay2}. We now briefly explain
the Euler characteristic values of the local Milnor fibers.

\smallskip

At a smooth point $x$ of $X_0$, the local Milnor fiber is a contractible ball, hence
$\chi(F_x)=1$. At a general point of $L$, the functions $y_2$, $y_3$ 
and $y_2^3+y_3^3$ are units. Therefore the family 
$${\cal X}={\cal V}(y_1^2y_2-y_0^2y_3+t^2(y_3^3+y_2^3))\subset\PP^3\times \Ab^1 $$
is locally of the form
${\cal V}(ab+t^2)$ for $t \neq 0$, which is isomorphic to  $\CC^\times$  and has zero Euler characteristic. 
At any point $q_i$, we have $y_2,y_3\neq 0$, while
$y_2^3+y_3^3$ vanishes along the line~$L$. Let $s$ be a local
coordinate on $L$ centered at $q_i$, so the local equation of $F_{q_i}$ is given by $ab+t^2s = 0$. Since $t \neq 0$, the coordinate $s$ is uniquely determined by $a,b$ and hence the Milnor fiber is isomorphic to $\Ab^2_{a,b}$ with $\chi(\Ab^2) = 1$. Finally, consider $p_1$, the point $p_2$ is analogous. On the chart
$y_3\neq0$, we similarly set
$
a=y_0,\ b=y_1,\ \lambda=y_2/y_3
$, so 
the local Milnor fiber is given by
$$
a^2=\lambda b^2+t^2(1+\lambda^3).
$$
Projection to the plane $\Ab^2_{b,\lambda}$ is a finite-to-one map.  A general fiber over the base locus $B$ has two points, while over
the exceptional locus
$
E= \Ab^2_{b,\lambda} \setminus B = \mathcal V\bigl(\lambda b^2-t^2(1+\lambda^3)\bigr)
$
the fiber is one point. Applying multiplicativity and additivity of Euler characteristic, we~have
$$
\chi(F_{p_1}) = \chi(F_{p_2})
=
2\,\chi(B)+1\cdot\chi(E)
=
2\chi(\Ab^2)-\chi(E).
$$
It remains to compute $\chi(E)$. Since $1+\lambda^3$ is a unit near
$\lambda=0$, the curve $E$ is locally a circle
$
\lambda b^2=t^2
$ of Euler characteristic equal to
$\chi(E) =\chi(\CC^\times)=0$, and~thus
$$
\chi(F_{p_1})=\chi(F_{p_2})=2.
$$
Sabbah's triangular linear system has coefficients from Table~\eqref{tab: EuObs} and reads as
$$
\begin{aligned}
1 &= m_{X_0},\qquad \qquad \qquad \quad \, \, \text{ for } \; x \in X_0^{\rm sm},\\
0 &= 2m_{X_0}-m_L, \qquad \qquad \, \text{ for } \; x \in L\setminus \{p_i,q_j\},\\
2 &= m_{X_0}-m_L+m_{p_i},\quad  \;\;\;  \text{ for } \; x \in \{p_1,p_2\},\\
1 &= 2m_{X_0}-m_L+m_{q_i},\quad \; \, \text{ for } \; x \in \{q_1,q_2,q_3\}.
\end{aligned}
$$
Thus, we have
$
m_{X_0}=1,\,
m_L=2,\,
m_{p_i}=3,\,
m_{q_j}=1\,
$, recovering the exponents~in~\eqref{eq: limit Whitney}. \hfill~$\diamond$ 
\end{example}

\section{Dualizing Special Families}\label{sec: dializing special}
We now apply the general theory to several families where the limiting
conormal cycle can be written explicitly. We begin with the
classical case of hypersurfaces with isolated singularities. 

\smallskip

For a hypersurface in $\PP^n$ with an isolated singularity $p$, let
$\mu_p^{n}$ denote its \emph{Milnor number}, and let $\mu_p^{n-1}$ denote the
Milnor number of a general hyperplane section through $p$. For the definition and  properties of Milnor numbers, we refer to~\cite{Milnor, Dimca1992}. 
Note that for numerical experiments, there exist algorithms computing 
Milnor numbers, see for instance~\cite{MixedMultiplicityM2}.
\begin{proposition}[Isolated hypersurface singularities] \label{prop: IHS}
 Let
$
\mathcal X\to\Ab^1
$
be a flat family of~hypersurfaces such that the fibers $X_t\subset\PP^n$ are smooth for $t\neq0$, and the
special fiber $X_0$ has only isolated singularities
$p_1,\dots,p_r$. Then the specialization of the relative conormal has the~form  
\begin{equation}\label{eq: IHS conormal}
 \bigl[(\mathcal N_{\mathcal X/\mathbb A^1})_0\bigr]
=
[\mathcal N_{X_0}]
+
\sum_{j=1}^r \bigl( \mu_{p_j}^{n}+\mu_{p_j}^{n-1} \bigr) \, [\mathcal N_{p_j}].   
\end{equation}
\end{proposition}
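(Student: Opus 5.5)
We apply Theorem~\ref{thm: limit conormal strata} and Sabbah's formula in the form of Corollary~\ref{cor: smooth general fibers}. The general fibers are smooth by hypothesis. The total space $\mathcal X$ is then a hypersurface in $\PP^n\times\Ab^1$ whose singularities lie over the points $p_j$, since away from them $X_0$ is reduced and smooth. Hence we may take the stratification $\mathcal S_0$ of $X_0$ to consist of $X_0^{\rm sm}=X_0\setminus\{p_1,\dots,p_r\}$ together with the points $\{p_j\}$. Any Whitney stratification of $\mathcal X$ compatible with $X_0$ can be coarsened within $X_0$ to this one, and the multiplicities in~\eqref{eq: conormal as a cycle} do not depend on the choice. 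By Theorem~\ref{thm: limit conormal strata}, the only candidate components are $\mathcal N_{X_0}$ and $\mathcal N_{p_j}$. By Corollary~\ref{cor: top components multiplicity}, $m_{X_0}=1$; here $X_0$ is reduced because its singular locus is finite and the family is one of hypersurfaces.

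It remains to solve the equation of~\eqref{eq: sabbah new} at $x=p_j$. Here $d=n-1$, and the strata whose closure contains $p_j$ are $X_0^{\rm sm}$ (of dimension $n-1$) and $\{p_j\}$ (of dimension $0$). The equation reads
$$
\chi(F_{p_j}) = \operatorname{Eu}_{X_0}(p_j) + (-1)^{n-1} m_{p_j},
$$
using $\operatorname{Eu}_{\{p_j\}}(p_j)=1$. So we need two classical inputs.

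The first input is the Euler characteristic of the local Milnor fiber. Near $p_j$ the total space is $\{F_0(y)+tG(y,t)=0\}$. We expect $F_{p_j}$ to be diffeomorphic to the Milnor fiber of the isolated hypersurface singularity $F_0$ at $p_j$ in $\CC^n$, which is a bouquet of $\mu^n_{p_j}$ spheres of dimension $n-1$. That gives $\chi(F_{p_j})=1+(-1)^{n-1}\mu^n_{p_j}$. To justify this, I would argue that the fibers $X_t\cap B_\delta$ are smooth and transverse to $\partial B_\delta$ for small $t$, and that the family restricted to the sphere $\partial B_\delta$ is a trivial deformation. The local fiber is then a smoothing of the isolated singularity, so by the standard fact that all smoothings of an isolated hypersurface singularity have the same topology as the Milnor fiber (both are fibers of the miniversal deformation over the complement of the discriminant, which is connected), $\chi(F_{p_j})=1+(-1)^{n-1}\mu^n_{p_j}$.

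The second input is the local Euler obstruction of a hypersurface with an isolated singularity. By the formula of Lê--Teissier and Dubson, it equals the Euler characteristic of the complex link, that is, of a general hyperplane section $X_0\cap H\cap B_\delta$ near $p_j$, with $H$ a general hyperplane close to $p_j$. That section is the Milnor fiber of a general linear section, a bouquet of $\mu^{n-1}_{p_j}$ spheres of dimension $n-2$. Hence
$$
\operatorname{Eu}_{X_0}(p_j)=1+(-1)^{n-2}\mu^{n-1}_{p_j}.
$$

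Substituting both inputs gives
$$
(-1)^{n-1}\mu^n_{p_j}=(-1)^{n}\mu^{n-1}_{p_j}+(-1)^{n-1}m_{p_j},
$$
so $m_{p_j}=\mu^n_{p_j}+\mu^{n-1}_{p_j}$, which is~\eqref{eq: IHS conormal}. The main obstacle is the first input: justifying that the relative Milnor fiber of $\pi$ agrees with the Milnor fiber of $F_0$, even though the total space may itself be singular at $(p_j,0)$. I would try to show that the deformation is a smoothing inside the Milnor ball, with fiberwise transversality at the boundary guaranteed by the isolatedness of the singularity, and then invoke the connectedness of the smoothing locus in the versal deformation. The second input is classical and would be cited.
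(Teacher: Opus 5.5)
Your proposal is correct and follows the paper's proof. The ingredients are the same: $m_{X_0}=1$ from Corollary~\ref{cor: top components multiplicity}, Sabbah's formula~\eqref{eq: sabbah new} at $p_j$, Milnor's bouquet theorem giving $\chi(F_{p_j})=1+(-1)^{n-1}\mu^n_{p_j}$, and the L\^e--Teissier formula $\operatorname{Eu}_{X_0}(p_j)=1+(-1)^{n}\mu^{n-1}_{p_j}$. The only difference is that you sketch justifications the paper simply cites, namely the smoothing/versal-deformation argument identifying the relative Milnor fiber with that of $F_0$, and the complex-link description of the Euler obstruction.
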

\begin{proof}
By Corollary~\ref{cor: top components multiplicity}, the main conormal
component ${\cal N}_{X_0}$ appears with multiplicity one.
Let $m_p$ be the multiplicity of $[\mathcal N_p]$ in $\bigl[(\mathcal N_{\mathcal X/\mathbb A^1})_0\bigr]$. Sabbah's formula~\eqref{eq: sabbah new}
computed at $p$ gives
\begin{equation}\label{eq: milnor}
  \chi(F_p)
=
{\rm Eu}_{X_0}(p)+(-1)^{n-1} m_p \quad \Longleftrightarrow \quad m_p=(-1)^{n-1}\bigl(\chi(F_p)-{\rm Eu}_{X_0}(p)\bigr).  
\end{equation}
By Milnor's theorem~\cite{Milnor} (see also~\cite[Theorem~2.7]{Dimca1992}), the local Milnor fiber $F_p$ of an isolated hypersurface
singularity in $\CC^n$ is homotopy equivalent to a bouquet of $\mu_p^n$ many $(n-1)$-spheres.  In particular,
$
\chi(F_p)=1+(-1)^{n-1}\mu_p^n
$. On the other hand, one has
$
{\rm Eu}_{X_0}(p)=1+(-1)^{n}\mu_p^{n-1}
$, see~\cite[(6.2.2)]{LeTeissier1988}. Substituting this into~\eqref{eq: milnor} gives the claim.
\end{proof}
\begin{example}[Plane curves]
Let $C_0\subset\PP^2$ be a reduced plane
curve. For an isolated singularity~$p$, let $\delta_p$ denote the \emph{delta invariant}~\cite[p.~209]{Fischer2001} and let $r_p$ be the number of local
branches of~$p$. The  celebrated  formula of Milnor derived in~\cite{Milnor} then gives 
\begin{equation}\label{eq: Milnor formula plane}
   \mu_p^2=2\delta_p-r_p+1,
\quad
\mu_p^1=\operatorname{mult}_p(C_0)-1, 
\end{equation}
where the second equality follows from~\cite[p.~11]{Dimca1992}.
Hence coefficients in~\eqref{eq: IHS conormal}
become
$
2\delta_p+\operatorname{mult}_p(C_0)-r_p
$.
In particular, if the flat family $\pi: {\cal C} \to \Ab^1$ of smooth plane curves degenerates to $C_0$ with exactly $\tau$ nodes and $\kappa$ cusps, then the limiting conormal cycle~is
$$
\bigl[(\mathcal N_{\mathcal C/\Ab^1})_0\bigr]
=
[\mathcal N_{C_0}]
+
2\sum_{i=1}^{\tau}[\mathcal N_{p_i}]
+
3\sum_{j=1}^{\kappa}[\mathcal N_{q_j}].
$$
\end{example}
\begin{remark}
    As a corollary of Propositions~\ref{prop: multiplicities in dual} and~\ref{prop: IHS}, one can recover the degree of the dual hypersurface of a
hypersurface $X_0$ of degree $D$ with only isolated singularities $p_1,\dots,p_r$:
$$
\deg(X_0^\vee)
=
D(D-1)^{n-1}
-
\sum_{j=1}^r\bigl(\mu_{p_j}^n+\mu_{p_j}^{n-1}\bigr).
$$
For a different  proof of the above formula we refer to~\cite{KleimanEnumerativeSingularities}. In particular, for a plane curve,
$$
\deg(C_0^\vee)=D(D-1)-\sum_{j=1}^r (2\delta_j +{\rm mult}_{p_j}(C_0)- r_j),
$$
which recovers the \emph{General Class Formula} from \cite[Section A.5.4]{Fischer2001}.
\end{remark}
\begin{example}
Consider a flat family of smooth cubic surfaces degenerating to a \emph{Cayley~cubic}:
$$
\mathcal X
=
\mathcal V\bigl(
y_0y_1y_2+y_0y_1y_3+y_0y_2y_3+y_1y_2y_3
+
t(y_0^3+y_1^3+y_2^3+y_3^3)
\bigr)
\subset \PP^3\times\Ab^1_t.
$$
It has four isolated singular points. These are the following ordinary double points:
$$
p_0=[1:0:0:0],\quad
p_1=[0:1:0:0],\quad
p_2=[0:0:1:0],\quad
p_3=[0:0:0:1].
$$
At each node $p_i$ we compute sectional Milnor numbers
$
\mu_{p_i}^{3}=1$,
$\mu_{p_i}^{2}=1$
using the package \texttt{MixedMultiplicity}~\cite{MixedMultiplicityM2}. By
Propositions~\ref{prop: multiplicities in dual}, \ref{prop: IHS}, the limit of the dual
hypersurface~is
$$
 z_0^2z_1^2z_2^2z_3^2 \cdot \Delta_{X_0^\vee}.
$$
Here $X_0^\vee$ is a quartic surface, classically known as the \emph{Steiner
surface}, $\Delta_{X_0^\vee}$ is its defining polynomial, and
$z_0,z_1,z_2,z_3$ define the four planes dual to the nodes
$p_0,p_1,p_2,p_3$, respectively.
\end{example}

\subsection*{Generic Complete Intersections}\label{sec: CI}

We now pass to degenerations in which a smooth irreducible general fiber breaks into a transverse union of smooth irreducible components. We begin with a family studied in~\cite{Zhang}.
\begin{example}\label{ex: GCI}
Take a family of smooth hypersurfaces
$
{\XX={\cal V}(F) = {\cal V}(fg+th)\subset \PP^n\times\Ab^1_t}
$,
where $f,g,h$ are general homogeneous polynomials and
$\deg h=\deg f+\deg g$. 
Then, 
$$
X_0={\cal V}(fg)={\cal V}(f)\cup {\cal V}(g).
$$
Since $f,g,h$ are general, the divisors ${\cal V}(f)$, ${\cal V}(g)$, and ${\cal V}(h)$
meet transversely, so their union is a simple normal crossing divisor in $\PP^n$. Thus, $X_0$ has a natural stratification by intersections:
$$
S_f ={\cal V}(f)\setminus {\cal V}(g),\quad
S_g = {\cal V}(g)\setminus {\cal V}(f),\quad
S_{fg} = {\cal V}(f,g)\setminus {\cal V}(h),\quad
S_{fg}^h = {\cal V}(f,g,h).
$$
Together with the open stratum $\XX\setminus X_0$, this gives the
stratification of the map $\XX\to\Ab^1$. 

By Theorem~\ref{thm: limit conormal strata}, every irreducible component of
the special fiber of the relative conormal space is the conormal variety of one
of the strata above. We claim that, as an algebraic cycle,
\begin{equation}\label{eq: conormal GCI example}
    [({\cal N}_{\mathcal X/\mathbb A^1})_0]
=
[{\cal N}_{\overline{S}_f}]+[{\cal N}_{\overline{S}_g}]
+2\, [{\cal N}_{\overline{S}_{fg}}]
+[{\cal N}_{\overline{S}_{fg}^h}].
\end{equation}
By Corollary~\ref{cor: top components multiplicity}, smooth top-dimensional strata $S_f$ and $S_g$  contribute with multiplicity~one. 
We compute the remaining multiplicities as lengths of local algebras at
general points~as~in~\eqref{eq: alg cycle}. 
 We use the hypersurface case of Remark~\ref{rem: relative conormal ideal}.
For a family~${\cal V}(F)$, with fiber coordinates
$(y_0,\ldots,y_n)$ and dual coordinates $(z_0,\ldots,z_n)$ the
relative conormal ideal is 
given~by
\begin{equation}\label{eq: hypersurface relative conormal}
    \left(
(F) + 
I_2
\begin{pmatrix}
z_0 & z_1 & \cdots & z_n\\
\partial_{y_0}F & \partial_{y_1}F & \cdots & \partial_{y_n}F
\end{pmatrix}
\right)
:
(\partial_{y_0}F,    \ldots, \partial_{y_n}F)^\infty
:
(z_0,\ldots,z_n)^\infty.
\end{equation}
In the local computations below, we work on the affine chart
$y_0\neq 0$ of $\mathbb P^n$, with coordinates
$u_i=y_i/y_0$ for $i=1,\ldots,n$. A hyperplane
$[\eta_0:\cdots:\eta_n]\in(\mathbb P^n)^\vee$ is then determined by
$$
\eta_0=-(\eta_1 u_1 + \ldots + \eta_n u_n).
$$
Thus, locally, we consider the dual projective
coordinates
$
[\eta_1:\cdots:\eta_n]\in\mathbb (\PP^{n-1})^\vee
$.
The function $h$ is a unit on the stratum~$S_{fg}$. Therefore, we can suppose that in a
neighbourhood of a generic point of $S_{fg}$, the family $\mathcal X$ is
given by $u_1u_2+t=0$. Then, the stratum $S_{fg}$ is given by ${\cal V}(u_1,u_2)$ and the fiber gradient is
$\nabla_uF=(u_2,u_1,0,\ldots,0)$. We pass to the affine chart
$\eta_1=1$ on $\mathbb (\PP^{n-1})^\vee$ and write $\eta_2=\alpha$. By
\eqref{eq: hypersurface relative conormal}, the local relative~conormal~ideal~is 
\[
J_{fg}:\Sigma_{fg}^{\infty}, \quad \text{where } \; J_{fg}=
\left(
u_1u_2+t,\,
u_1-\alpha u_2,\,
\eta_j u_1,\,
\eta_j u_2 \text{ for } j\geq 3
\right),
\quad \text{and } \; 
\Sigma_{fg}=(u_1,u_2,t).
\]
Here $\Sigma_{fg}$ is the singular ideal of the fiber. Thus, near a general point of ${\cal N}_{\overline{S}_{fg}}$, the
conormal space is given by $\left(
u_1u_2+t,\,
u_1-\alpha u_2,\,
\eta_3,\ldots,\eta_n
\right)$. Taking the special fiber at $t=0$, we get
$$
R_{fg}\coloneqq
\frac{
\CC[u_1,\ldots,u_n,\alpha]
}{
(u_1u_2,\, u_1-\alpha u_2)
}.
$$
Let $\zeta$ be the generic point of ${\cal N}_{\overline{S}_{fg}}$ corresponding to the
prime $(u_1,u_2)$ and residue field~$\mathcal K$.~Then,
$$
(R_{fg})_\zeta
\simeq
 \frac{\mathcal K[u_1,u_2]}{(u_1u_2,\, u_1-\alpha u_2)}
\simeq
\mathcal K[u_2]/(u_2^2), \quad \mathcal{K}=\CC(u_3,\ldots,u_n,\alpha).
$$
Thus $\operatorname{length}(R_{fg})_\zeta=2$, so ${\cal N}_{\overline{S}_{fg}}$ appears
with multiplicity $2$. Along $S^h_{fg}$, the local equation is
$
u_1u_2+tu_3=0
$.
The stratum is locally ${\cal V}(u_1,u_2,u_3)$ and the fiber gradient $\nabla_uF$ is
$
(u_2,u_1,t,0,\ldots,0)
$.
We pass to the 
chart $\eta_1=1$ on $(\mathbb P^{n-1})^\vee$ and write~${(\eta_1:\eta_2:\eta_3)=(1:\alpha:\beta)}$.
Saturating
by the singular ideal gives $\eta_4=\cdots=\eta_n=0$. Let
$\mathcal K=\CC(u_4,\ldots,u_n,\alpha,\beta)$. Then,
$$
\frac{\mathcal K[u_1,u_2,u_3,t]}{J_h:\Sigma_h^\infty}
\simeq
\frac{\mathcal K[u_1,u_2,u_3,t]}
{(u_1-\alpha\beta^{-1}t,\, u_2-\beta^{-1}t,\, u_3+\alpha\beta^{-2}t)},
$$
where
$
J_h=(u_1u_2+tu_3,\, u_1-\alpha u_2,\, t-\beta u_2,\, \alpha t-\beta u_1)$ and $\Sigma_h=(u_1,u_2,t)$. Taking $t=0$ gives
$$
\mathcal K[u_1,u_2,u_3]/(u_1,u_2,u_3)\simeq\mathcal K.$$ 
Hence the component
${\cal N}_{\overline{S}^h_{fg}}$ appears with multiplicity one. This completes the analysis.
\hfill~$\diamond$  \end{example}

\begin{remark}\label{rem: length}
In the example above, the algebra
$
A=\mathbb K[x]/(x^2)
$
is Artinian, with maximal ideal $\mathfrak m=(x)$ and residue
field
$
A/\mathfrak m \simeq \mathbb K
$.
Hence its length equals its dimension as a $\mathbb K$-vector space,
see, e.g.,~\cite[Sec.~7, Exercise~1.6]{Liu2002}. The same applies to
$
{A=
\mathbb K[x_1,\ldots,x_k]/(x_1^{m_1},\ldots,x_k^{m_k})}
$.
\end{remark}
Example~\ref{ex: GCI} above admits a direct generalization. Consider a family
$
\mathcal X=\mathcal V(F_1,\ldots,F_k)$ in $\PP^n\times\Ab^1_t$,
with 
$F_i=f_{i1}\cdots f_{ir_i}+t h_i
$
and
$
\deg h_i=\sum_{j=1}^{r_i}\deg f_{ij}.
$
Assume that the divisors $\mathcal V(f_{ij})$ and $\mathcal V(h_i)$ are
smooth and have transverse intersections, and that
$\pi:\mathcal X\to\Ab^1$ is a flat family of complete intersections with smooth general fibers. For each $i = 1,\dots,k$, let $J_i\subseteq\{1,\ldots,r_i\}$ and
take $\varepsilon_i\in\{0,1\}$. We define the corresponding
locally closed stratum as
\begin{equation}\label{eq: stratum GCI}
S_{\mathbf{J},\boldsymbol{\varepsilon}}
=
\left(
\bigcap_{i=1}^k\ \bigcap_{j\in J_i} \mathcal V(f_{ij})
\;\cap\;
\bigcap_{\varepsilon_i=1} \mathcal V(h_i)
\right)
\setminus
\left(
\bigcup_{i=1}^k\ \bigcup_{j\notin J_i} \mathcal V(f_{ij})
\;\cup\;
\bigcup_{\varepsilon_i=0} \mathcal V(h_i)
\right).    
\end{equation}
\begin{lemma}\label{lem: CI mult}
In the above notation, let
$
\mathcal X=\mathcal V(F_1,\ldots,F_k)\subset \PP^n\times\Ab^1_t
$.
For a stratum~$S_{\mathbf J,\boldsymbol\varepsilon}$,~set
$$
m_i (\mathbf J,\boldsymbol\varepsilon)=
\begin{cases}
|J_i|, & \varepsilon_i=0,\\
|J_i|-1, & \varepsilon_i=1.
\end{cases}
$$
Then, as an algebraic  cycle, the special fiber of the relative conormal
space is given by
$$
\bigl[(\mathcal N_{\mathcal X/\mathbb A^1})_0\bigr]
=
\sum_{\mathbf J,\boldsymbol\varepsilon}
\bigl(\prod_{i=1}^k m_i(\mathbf J,\boldsymbol\varepsilon)\bigr)
\bigl[{\cal N}_{\overline{S}_{\mathbf J,\boldsymbol\varepsilon}}\bigr],
$$
where empty strata and terms with
coefficient $0$ are omitted.   
\end{lemma}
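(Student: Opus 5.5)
Following Example~\ref{ex: GCI}, the plan has three steps: fix the candidate components, reduce each multiplicity to a local length computation, and then compute that length.

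\textbf{Step 1: candidate components.} The strata $S_{\mathbf J,\boldsymbol\varepsilon}$ of~\eqref{eq: stratum GCI}, together with the open part $\mathcal X\setminus X_0$, form a stratification of the map $\pi$. This holds because all the divisors $\mathcal V(f_{ij})$, $\mathcal V(h_i)$ are smooth and meet transversely, so locally they can be taken as coordinate hyperplanes. Strata with some $J_i=\emptyset$ do not lie in $X_0$: on them $F_i=0$ forces $th_i=0$, so only strata with all $|J_i|\ge1$ occur. By Theorem~\ref{thm: limit conormal strata} and its following remark (valid for any flat family with a stratification of the map), every component of $(\mathcal N_{\mathcal X/\Ab^1})_0^{\rm red}$ is some $\mathcal N_{\overline S_{\mathbf J,\boldsymbol\varepsilon}}$. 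It therefore suffices to compute the length of the special fiber at the generic point $\zeta$ of each such conormal variety, as in~\eqref{eq: alg cycle}.

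\textbf{Step 2: local model.} Near a general point of $S_{\mathbf J,\boldsymbol\varepsilon}$, choose local coordinates in which the $f_{ij}$ with $j\in J_i$ are coordinates $u_{i,j}$, and the $h_i$ with $\varepsilon_i=1$ are further coordinates $w_i$. All other factors are units and can be absorbed into the coordinates or into $t$. The family then becomes a product of independent blocks
$$F_i=\prod_{j\in J_i}u_{i,j}+t\,w_i^{\varepsilon_i},$$
where the blocks use disjoint coordinate sets and share only $t$, times a smooth factor. The conormal cycle of the product is governed by the relative conormal ideal of Remark~\ref{rem: relative conormal ideal}. The row span of the Jacobian is spanned by the gradients of the individual blocks. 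At the generic point $\zeta$ the covector is generic in the conormal space of the stratum, so each block's gradient is nonzero. The local algebra at $\zeta$ should therefore be a tensor product over the residue field $\mathcal K$ of the block algebras, but only once $t$ is eliminated. This step is the main obstacle: $t$ is shared by all blocks, so the tensor decomposition is not automatic. I would handle it by writing each block's conormal ideal as a graph over its own coordinates, and then arguing that fixing $t=0$ in the total ideal splits the algebra as a tensor product of the blocks' special fibers. If this shared-$t$ argument fails, the fallback is to compute the length directly as the degree of a monomial-type Artinian algebra, as in Remark~\ref{rem: length}. This yields multiplicativity, so the coefficient is $\prod_i m_i$.

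\textbf{Step 3: the single block.} It remains to show that a single block with $r=|J|$ has multiplicity $r$ if $\varepsilon=0$ and $r-1$ if $\varepsilon=1$.
\begin{itemize}
\item For $\varepsilon=0$ the equation is $u_1\cdots u_r+t=0$. The conormal equations say that $\eta$ is proportional to $(\prod_{j\ne k}u_j)_k$. At $\zeta$ we may set $\eta_1=1$ and $\eta_k=\alpha_k$, which gives $u_k=\alpha_k^{-1}u_1$ up to units, and then $u_1^{r}\cdot c+t=0$. At $t=0$ the local algebra is $\mathcal K[u_1]/(u_1^r)$, of length $r$. This generalizes the case $r=2$ of Example~\ref{ex: GCI}.
\item For $\varepsilon=1$ the equation is $u_1\cdots u_r+tw=0$. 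The extra gradient entry $\partial_wF=t$ lets $t$ be solved in terms of $\eta_w$ and $\prod_{j\neq1}u_j$. Substituting yields a length-$(r-1)$ algebra $\mathcal K[u_1]/(u_1^{r-1})$; the case $r=2$ gives $1$, matching $S^h_{fg}$ in the example.
\end{itemize}
The vanishing cases $m_i=0$ (that is, $|J_i|=1$ with $\varepsilon_i=1$) are exactly the strata along which the block is smooth relative to $t$, namely $u_1+tw=0$. There no limiting conormal covector is concentrated on the smaller stratum, so those components are absent, consistent with the omitted terms.
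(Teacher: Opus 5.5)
Your proposal is correct and follows the paper's proof essentially step for step. Both use the same local block models $\prod_{j\in J_i}u_{ij}+t\,w_i^{\varepsilon_i}$ near each stratum, the same splitting of the local algebra at the generic point as $R_1\otimes_{\mathbb K[t]}\cdots\otimes_{\mathbb K[t]}R_k$, and the same single-block lengths $|J_i|-\varepsilon_i$, with the final length read off as in Remark~\ref{rem: length}. The paper simply asserts the shared-$t$ step you flag: because each block algebra $R_i$ is $t$-torsion-free, their tensor product over $\mathbb K[t]$ is flat, so it already equals the closure of the product conormal over $t\neq0$ with no further saturation, and it reduces mod $t$ to $\bigotimes_{\mathbb K}R_i/(t)$.
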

\begin{proof}
    Similar to Example~\ref{ex: GCI}, the transversality condition implies that near every
stratum~$S_{\mathbf{J},\boldsymbol{\varepsilon}}$ the family is locally a complete intersection of independent
blocks in disjoint sets of variables. Therefore letting $u_{ij} = x_{ij}/x_{i1}$ be coordinates on the affine chart $x_{i1} \neq 0$ of $\PP^n$, we can suppose that  near $S_{\mathbf{J},\boldsymbol{\varepsilon}}$, the family is given by the following equations:
\begin{align*}
    u_{i1}\cdots u_{is_i}+t=0, \quad \varepsilon_i=0, \quad \text{with } s_i = |J_i|,\\ 
    u_{i1}\cdots u_{is_i}+t u_{i,s_i+1}=0, \quad \varepsilon_i=1, \quad \text{with } s_i = |J_i|.
\end{align*} 
One can check Thom's $a_\pi$-condition~(Definition~\ref{def: thom})  directly from the tangent
space equations of these local models. 
It remains to compute the~multiplicities of the strata~\eqref{eq: stratum GCI}.

\smallskip

Since the variables of different blocks are disjoint, after choosing a
general conormal direction, the local algebra~$R$, which computes the
multiplicity, splits block by block. Explicitly, let $R_i$ be the local coordinate ring of the relative conormal space for the block
${\cal V}(F_i)$, then 
$$
R = R_1\otimes_{\mathbb K[t]}\cdots\otimes_{\mathbb K[t]}R_k,
$$
where $
\mathbb K=\mathbb C(\alpha_{ij})
$
is the rational function field generated by general conormal directions in all blocks. The rest is analogous to Example~\ref{ex: GCI}: the relative conormal ideal of ${\cal V}(F_i)$ is locally
$$
\left( (F_i) + 
I_2
\begin{pmatrix}
1 & \alpha_{i2} & \cdots & \alpha_{is_i}\\
\partial_{u_{i1}}F_i & \partial_{u_{i2}}F_i & \cdots &
\partial_{u_{is_i}}F_i
\end{pmatrix} \right) :
(\partial_{u_{i1}}F_i,    \ldots, \partial_{u_{is_i}}F_i)^\infty
$$
which, after setting $t=0$, implies $u^{s_i} = 0$ for $\varepsilon_i=0$ and 
$u^{s_i-1} = 0$ for
$\varepsilon_i=1$, where $ u \coloneq u_{i1}$. Thus,
\[
R/(t) \simeq \mathbb{K}[u_1]/(u_1^{|J_1| - \varepsilon_1}) \otimes_{ \mathbb{K}} \cdots \otimes_{ \mathbb{K}} \mathbb{K}[u_k]/(u_k^{|J_k| - \varepsilon_k}).
\]
By Remark~\ref{rem: length}, its  length is 
$\dim_{\mathbb{K}} R/(t) = \prod_{i=1}^k m_i (\mathbf J,\boldsymbol\varepsilon)$, which completes the~proof.
\end{proof}

\subsection*{Reciprocal Linear Spaces}\label{sec: RLS}
In this subsection we study degenerations of the  dual varieties of reciprocal linear
spaces. This gives a recursive  proof of the degree formula conjectured in~\cite{matsubaraheo2026principalmatroiddeterminants}. A different proof of this result 
has previously appeared in \cite{JulianLeonie, Speyer2009}. Here, we avoid Whitney stratifications and
Sabbah's formula, and instead exploit the underlying deletion-contraction structure.

\medskip

We recall the definition. Let $L\subset \PP^n$ be a linear space of dimension $d<n$. The
\emph{reciprocal linear space} $\mathcal R_L$ is the Zariski closure of the image of
$L$ under the Cremona map
$$
\operatorname{crem}:\PP^n\dashrightarrow \PP^n,\quad
(z_0:\cdots:z_n)\mapsto (z_0^{-1}:\cdots:z_n^{-1}).
$$
If $A$ is a full-rank $(d+1)\times(n+1)$ matrix whose row span is $L$,
we denote by $M(L)$ the linear matroid on $\{0,\ldots,n\}$ represented by the
columns of $A$. Recall that the vanishing ideal~$I_{\mathcal R_L}$ is generated by the circuit polynomials of the matroid 
$M(L)$, and these form a universal Gr\"obner basis, see
\cite{ProudfootSpeyer2006} for details.
One of the goals of this section is the following formula.
\begin{theorem}\label{thm: RecipDeg}
Let $\mathcal R_L^\vee$ be a hypersurface. Then its degree depends only on $M(L)$ and~equals
\begin{equation}\label{eq: RL degree}
\deg(\mathcal R_L^\vee)=2^d  \cdot \beta(M(L)),    
\end{equation}
where $\beta(M(L))$ is the beta invariant of the matroid $M(L)$.
\end{theorem}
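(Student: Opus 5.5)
Our plan is to argue by induction on the number of elements of $M(L)$, via the deletion--contraction recursion $\beta(M)=\beta(M\setminus e)+\beta(M/e)$, valid when $e$ is neither a loop nor a coloop. We use a Gröbner degeneration of $\mathcal R_L$ and Corollary~\ref{cor: dual deg} to realize this recursion geometrically. Since the circuit polynomials form a universal Gröbner basis, we can pick a weight $\omega$ that is large on a single coordinate $e$. Concretely, we take $\omega=\epsilon\cdot\mathbf 1_e$, or rather its negative, chosen so that each circuit polynomial involving $y_e$ has as initial form its part containing (resp.\ not containing) $y_e$. We then identify the special fiber $(\mathcal R_L)_0$. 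We expect it to be the reduced union of two pieces. The first is the cone over $\mathcal R_{L\setminus e}$, i.e.\ the reciprocal space of the deletion with $y_e$ free. The second is $\mathcal R_{L/e}$ sitting in the hyperplane $y_e=0$, i.e.\ the reciprocal space of the contraction. This is the standard flat degeneration matching the Hilbert series recursion of Proudfoot--Speyer, where $\deg \mathcal R_L$ equals the Möbius invariant $|\mu|$ and satisfies the same recursion.

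Next, we compute the limit of the dual hypersurface $(\mathcal R_L^\vee)_0$ with respect to $-\omega$, and show that it is exactly the union of duals of these two pieces with the right multiplicities. Because $\omega$ has only one nonzero entry, the torus action only rescales $y_e$ and $z_e$. So we may instead track the conormal variety directly, using Proposition~\ref{prop: conormal deg}: the special fiber of $\mathcal N_{\mathcal R_L}$ is the flat limit, of bidegree class equal to that of $\mathcal N_{\mathcal R_L}$. Its components over the two pieces should be the following.
\begin{itemize}
\item The conormal variety of the cone over $\mathcal R_{L\setminus e}$. This cone is dual defective, since its dual is $\mathcal R_{L\setminus e}^\vee$ inside $z_e=0$, which has codimension~$2$.
\item The conormal variety of $\mathcal R_{L/e}\subset\{y_e=0\}$. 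Its dual is a cone over $\mathcal R_{L/e}^\vee$, with the same degree as the latter.
\item Possibly extra components over the intersection locus, coming from Theorem~\ref{thm: limit conormal strata}.
\end{itemize}
The factor $2^d$ has to come from the multiplicity with which the relevant pieces appear. In the spirit of Lemma~\ref{lem: CI mult}, a local model $u\cdot v+t\cdot(\dots)$ along a normal crossing produces a factor $2$. Rather than Sabbah's formula, the intended route is to compare polar classes directly. We would push forward the class identity $[\mathcal N_{\mathcal R_L}]=\sum m_i[\mathcal N_{W_i}]$ and read off the coefficient of $s\,t^n$, which is $\deg$ of the dual. This requires knowing which $W_i$ are non-defective.

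We expect two contributions: one from $\mathcal R_{L/e}$ (dimension $d-1$), and one from the intersection stratum $\mathcal R_{L/e}\cap\mathrm{cone}$. These should give $2^{d}\beta(M/e)$ and $2^{d}\beta(M\setminus e)$ respectively, so the $2^{d-1}$ from induction must be compensated by a multiplicity $2$. For the induction we also have to treat loops and coloops separately (then $\beta=0$ and $\mathcal R_L^\vee$ is defective or a cone), and check the base case of rank $\le2$ directly, e.g.\ a conic for $d=1$, giving $2\beta$.

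The main obstacle is the multiplicity bookkeeping. We must show that the degenerate pieces contribute exactly multiplicity $2$, and that no other candidate stratum from Theorem~\ref{thm: limit conormal strata} contributes to the top polar degree. As a first attempt we would do this with a local length computation as in Example~\ref{ex: GCI}: near a general point of the intersection, the family looks like $y_e\cdot g+t\,h=0$ transversally, and this yields length $2$. Whether higher-codimension strata carry non-defective conormals with nonzero multiplicity is where the argument may fail, and it would need a genericity or matroid-connectivity argument.
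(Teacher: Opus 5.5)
Your proposal has a genuine gap, and it begins with the special fiber. With weight $+1$ on $y_e$ (the paper's $w$), take a circuit $C\ni e$. The initial form of its circuit polynomial is $y_e$ times the circuit polynomial of $C\setminus e$ in $M/e$, while circuits avoiding $e$ are unchanged. Hence $(\mathcal R_L)_0=(\mathcal R_{L\setminus e}\cap\{y_e=0\})\cup\operatorname{Cone}_p(\mathcal R_{L/e})$; compare ${\rm in}_\omega I_{\mathcal R_L}=(y_4,I_D)\cap I_C$ in the paper's example. (With weight $-1$ the initial form is the single monomial $c_e\prod_{j\in C\setminus e}y_j$, which gives neither picture.) You have the two pieces swapped, and this fails a dimension count: the cone over $\mathcal R_{L\setminus e}$ has dimension $d+1$, and $\mathcal R_{L/e}$ has dimension $d-1$.

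Your bookkeeping inherits the error. In the correct picture, the cone over the \emph{contraction} is the dual-defective piece: its dual is $\mathcal R^\vee_{L/e}$ inside $\{z_e=0\}$. The deletion piece is a top-dimensional component of the reduced limit (all circuit monomials are squarefree). So it contributes $\deg\mathcal R^\vee_{L\setminus e}=2^d\beta(M\setminus e)$ with multiplicity one, by Corollary~\ref{cor: top components multiplicity}. The factor $2$ sits on the codimension-one intersection of the two components. That intersection is $\mathcal R_{L/e}\subset\{y_e=0\}$, whose dual is $\operatorname{Cone}_p(\mathcal R^\vee_{L/e})$ of degree $2^{d-1}\beta(M/e)$. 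So no intersection stratum can produce $2^d\beta(M\setminus e)$ as you propose. In your own picture, your two listed contributions even come from the same variety $\mathcal R_{L/e}$.

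More seriously, the step you call the main obstacle is the entire content, and it is left open. Reciprocal linear spaces are singular, so the stratification of the total space has further strata inside $\{y_e=0\}$ whose duals are hypersurfaces. For example, the coordinate points are nodes of $\mathcal R_{L\setminus e}$ already when $M\setminus e=U_{3,4}$, which is the paper's example with the Cayley-type cubic $I_D$. Their duals are the hyperplanes $\{z_i=0\}$, and in that example they do not appear in the limit. A length computation at a general point of the double locus gives the $2$ but says nothing about such strata; you would need Sabbah-type input to show that their multiplicities vanish.

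The paper never analyzes the primal special fiber. By Corollary~\ref{cor: dual deg}, the dual hypersurface $\mathcal V(F)$ degenerates with weight $-w$, equivalently $(1,\ldots,1,0)$. Its special fiber is simply $\mathcal V(F|_{z_n=0})$, the cone over the divisor $\mathcal R_L^\vee\cap H_n$. This divisor is read off from the birational parametrization $\phi(x,y)=x\star x\star y$ of $\mathcal R_L^\vee$ by $L\times L^\perp$. Since $\phi^*z_n=x_n^2y_n$, the pullback of $H_n$ is $\mathcal V(y_n)+2\,\mathcal V(x_n)$. These two divisors map birationally onto $\mathcal R^\vee_{L\setminus n}$ and $\mathcal R^\vee_{L/n}$. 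Moreover, $\partial_{z_n}F$ is a unit at their general points because $\nabla F\propto(1/x_0:\cdots:1/x_n)$, so the orders $1$ and $2$ transfer to $F|_{z_n=0}$. This gives $\deg\mathcal R^\vee_L=\deg\mathcal R^\vee_{L\setminus n}+2\deg\mathcal R^\vee_{L/n}$ directly. Crapo's recursion with base case $n=1$ then finishes the induction; a disconnected minor has $\beta=0$ and a defective dual. The square $x_n^2$ in the parametrization is the idea that replaces your missing multiplicity analysis.
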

\noindent
We first record an observation on Gröbner degenerations of hypersurfaces with special weights.
\begin{proposition}\label{lem: specialWeightLimit}
Let $X={\cal V}(F)$ be an irreducible hypersurface in $\PP^n$ not contained in
${H_n={\cal V}(z_n)}$. Consider $\omega=(1,\ldots,1,0)$, and let $X_0$ be the special
fiber of the Gröbner dege\-neration of $X$ with respect to $\omega$. Then,
scheme-theoretically, the special fiber $X_0$ equals 
\begin{equation}\label{eq: cone}
   X_0=\operatorname{Cone}_p(X\cap H_n),
\quad
p=[0:\cdots:0:1]. 
\end{equation}
\end{proposition}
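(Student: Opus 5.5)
The plan is to compute the Gröbner degeneration of the principal ideal $(F)$ directly and then identify its special fiber with the cone. For a hypersurface the ideal $I_X=(F)$ is principal, so $\{F\}$ is already a Gröbner basis with respect to any weight. Indeed, $\operatorname{in}_\omega$ is multiplicative on products, $\operatorname{in}_\omega(gF)=\operatorname{in}_\omega(g)\operatorname{in}_\omega(F)$, since $\CC[y]$ is a domain. Hence
\[
\operatorname{in}_\omega(I_X)=(\operatorname{in}_\omega F).
\]
The special fiber $X_0$ is therefore, scheme-theoretically, the hypersurface $\mathcal V(\operatorname{in}_\omega F)$.

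Next I determine $\operatorname{in}_\omega F$ for $\omega=(1,\ldots,1,0)$. Write $D=\deg F$ and expand $F=\sum_{k=0}^{D} y_n^{\,k}\,G_{D-k}(y_0,\ldots,y_{n-1})$, where each $G_j$ is homogeneous of degree $j$ in the first $n$ variables. A monomial $y^\alpha$ has weight $\omega\cdot\alpha=D-\alpha_n$. So $\operatorname{in}_\omega F=y_n^{k_0}G_{D-k_0}$, where $k_0$ is the smallest power of $y_n$ that occurs.

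The hypothesis rules out $k_0>0$. If $k_0>0$, then $y_n$ divides $F$, and irreducibility of $X$ (so $F$ may be taken irreducible, as $I_X$ is radical) forces $F=c\,y_n$, i.e.\ $X=H_n$, which is excluded. Hence $k_0=0$ and
\[
\operatorname{in}_\omega F=G_D(y_0,\ldots,y_{n-1})=F(y_0,\ldots,y_{n-1},0).
\]

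It remains to identify $\mathcal V(G_D)\subset\PP^n$ with $\operatorname{Cone}_p(X\cap H_n)$. The scheme $X\cap H_n$ is cut out in $H_n\cong\PP^{n-1}$ by $F|_{y_n=0}=G_D$. The cone over a subscheme $Z=\mathcal V(g_1,\ldots)\subset H_n$ with vertex $p=[0:\cdots:0:1]$ is defined by the same polynomials $g_i$, now viewed in $\CC[y_0,\ldots,y_n]$ and thus independent of $y_n$. This gives exactly $X_0=\mathcal V(G_D)=\operatorname{Cone}_p(X\cap H_n)$ as schemes, multiplicities included, because no radical is taken.

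The only genuine subtlety is the scheme-theoretic bookkeeping: fixing the convention that the cone is defined by the extended ideal rather than its radical, and checking that the Gröbner special fiber is exactly $\mathcal V(\operatorname{in}_\omega I)$, which follows from flatness of the weighted homogenization recalled in the Preliminaries. The hypothesis $X\not\subset H_n$ is used exactly once, to ensure $G_D\neq0$.
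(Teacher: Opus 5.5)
Your proposal is correct and follows essentially the same route as the paper, which simply observes that the monomials of maximal $\omega$-weight in $F$ are exactly those free of the last variable, so $\operatorname{in}_\omega(F)=F(z_0,\ldots,z_{n-1},0)$, and this defines the cone. You also make explicit several points the paper leaves implicit: that $\{F\}$ is a Gr\"obner basis of the principal ideal, that the hypothesis $X\not\subset H_n$ guarantees $F|_{z_n=0}\neq 0$, and that the cone is taken with the extended (non-radical) ideal.
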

\begin{proof}
Since $F$ is homogeneous, the monomials of maximal
$\omega$-weight are precisely the monomials not involving $z_n$. Hence
$
\operatorname{in}_\omega(F)=F(z_0,\ldots,z_{n-1},0)
$,
which defines the cone~\eqref{eq: cone}.    
\end{proof}
We write $L\setminus\{n\}$
for deletion, which is the image of $L$ under the coordinate projection forgetting $z_n$,  and $L/\{n\}$ for contraction, which is the hyperplane section $L\cap\{z_n=0\}$.
\begin{theorem}\label{thm: limitRLS}
Let $(\mathcal R_L)_t$ be the flat Gr\"obner degeneration of
$\mathcal R_L$ with respect to the weight $w=(0,\ldots,0,1)$, and assume that
$\mathcal R_L^\vee$ is a hypersurface. Then, set-theoretically, the special fiber of the induced
Gr\"obner degeneration  of the dual hypersurface with respect to~$-\omega$ equals
\begin{equation}\label{eq: RL dual set-theoretic}
    (\mathcal R_L^\vee)_0
=
\operatorname{Cone}_p(\mathcal R_{L\setminus\{n\}}^\vee)
\;\cup\;
\operatorname{Cone}_p(\mathcal R_{L/\{n\}}^\vee),
\end{equation}
where $p=[0:\cdots:0:1]\in(\PP^n)^\vee$. Moreover, as an algebraic cycle, we have
$$
[(\mathcal R_L^\vee)_0]
=
[\operatorname{Cone}_p(\mathcal R_{L\setminus\{n\}}^\vee)]
+
2\, [\operatorname{Cone}_p(\mathcal R_{L/\{n\}}^\vee)],
$$
where terms corresponding to defective dual varieties are omitted.
\end{theorem}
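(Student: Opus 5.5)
Since $\mathcal R_L^\vee$ is a hypersurface, I will work directly on the dual side. By biduality $\mathcal R_L^\vee$ determines $\mathcal R_L$. Corollary~\ref{cor: dual deg} says the degeneration of $\mathcal R_L$ with weight $w=(0,\ldots,0,1)$ induces the Gröbner degeneration of $\mathcal R_L^\vee$ with weight $-w=(0,\ldots,0,-1)$. This is equivalent, modulo the all-ones vector, to $(1,\ldots,1,0)$.

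\textbf{Step 1: the special fiber is a cone.} Apply Proposition~\ref{lem: specialWeightLimit} to the hypersurface $\mathcal R_L^\vee=\mathcal V(\Delta)$. This requires $\mathcal R_L^\vee\not\subseteq\mathcal V(z_n)$, which holds because $\mathcal R_L^\vee$ is an irreducible hypersurface and is not a hyperplane; I will treat the degenerate case separately. Scheme-theoretically we get
\[
(\mathcal R_L^\vee)_0=\operatorname{Cone}_p\bigl(\mathcal R_L^\vee\cap\{z_n=0\}\bigr).
\]
The problem therefore reduces to computing the cycle of the hyperplane section $[\mathcal R_L^\vee\cap \mathcal V(z_n)]$ inside $(\PP^{n-1})^\vee$. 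Taking cones preserves multiplicities.

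\textbf{Step 2: identify the hyperplane section.} A point $z\in(\PP^{n-1})^\vee$ is a hyperplane $H_z$ in $\PP^n$ passing through $e_n=[0:\cdots:0:1]$. I will use the standard description of projective duality under hyperplane sections and linear projections. The dual of the projection of $X$ from $e_n$ is contained in $X^\vee\cap\{z_n=0\}$. Moreover, the hyperplanes through $e_n$ that are tangent to $X$ come from two sources. The first is tangency at points away from $e_n$; these give the dual of the projection $\pi_{e_n}(X)$. The second is tangency along the fiber over $e_n$. The projection of $\mathcal R_L$ from $e_n$ is $\mathcal R_{L\setminus\{n\}}$, because the Cremona map commutes with coordinate projections. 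The points of $\mathcal R_L$ lying on $\{y_n=\infty\}$-type limits, i.e. near $e_n$, correspond to $\mathcal R_{L/\{n\}}$ through the contraction.

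Concretely, I would use the circuit description. Circuits of $M(L)$ containing $n$ restrict to circuits of $M(L/\{n\})$, and circuits avoiding $n$ are the circuits of $M(L\setminus\{n\})$. The plan is to show that the fiber of $\mathcal N_{\mathcal R_L}$ over $\{z_n=0\}$, pushed to $(\PP^{n-1})^\vee$, is supported on $\mathcal R_{L\setminus\{n\}}^\vee\cup\mathcal R_{L/\{n\}}^\vee$.

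\textbf{Step 3: multiplicities via degrees.} For the cycle statement I will compare local intersection multiplicities of $\mathcal V(\Delta)$ with $\mathcal V(z_n)$ along each component. At a general point of $\operatorname{Cone}_p(\mathcal R_{L\setminus\{n\}}^\vee)$ the intersection is transverse, so the multiplicity is $1$. At a general point of the contraction component, the hyperplane $\{z_n=0\}$ is tangent to $\mathcal R_L^\vee$, since it corresponds to the limit point $e_n$ direction. I expect the section there to be locally of the form $z_n=\xi^2$, which gives multiplicity $2$.

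To confirm this without delicate local analysis, I would check consistency with degrees. By Theorem~\ref{thm: RecipDeg}, the total degree $2^d\beta(M(L))$ must equal $2^{d}\beta(M\setminus n)+2\cdot 2^{d-1}\beta(M/n)$. This is exactly the deletion–contraction recursion $\beta(M)=\beta(M\setminus n)+\beta(M/n)$, applied with $\dim L/\{n\}=d-1$.

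This degree comparison turns the multiplicity question into an inequality check. Any component appears with multiplicity at least the value found from tangency, namely $\ge1$ and $\ge2$ respectively. Once the support is known, equality of degrees then forces the multiplicities to be exactly these values. The degeneration of $\mathcal R_L$ is induction-friendly, so I would prove the degree formula simultaneously by induction on $n$.

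\textbf{Main obstacle.} The hardest step will be Step 2, together with the lower bound of $2$ on the contraction component. This requires showing that $\operatorname{Cone}_p(\mathcal R_{L/\{n\}}^\vee)$ actually lies in the limit and that $\mathcal V(z_n)$ is tangent to $\mathcal R_L^\vee$ along it. I would first try an explicit curve argument in the spirit of Proposition~\ref{prop: conormal deg}. Take a point $x\in\mathcal R_{L/\{n\}}$ and a hyperplane $H$ tangent to it. Then lift to curves $x(s)\in\mathcal R_L$ obtained from $\operatorname{crem}$ applied to $\ell(s)\in L$ with $\ell_n(s)\to0$, so that $x(s)\to$ a point on the $e_n$-cone. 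Finally, compute the tangent hyperplanes and show that their $z_n$-coordinate vanishes to second order in $s$.
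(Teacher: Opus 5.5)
Your Step~1 is exactly the paper's first step. Your observation that projecting from $e_n$ gives $\mathcal R_{L\setminus\{n\}}$ correctly yields the deletion containment.

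\textbf{The gap: Step~3 is circular.} In the paper, Theorem~\ref{thm: RecipDeg} is \emph{deduced from} the statement you are proving. Flatness gives $\deg\mathcal R_L^\vee=m_1\deg\mathcal R_{L\setminus\{n\}}^\vee+m_2\deg\mathcal R_{L/\{n\}}^\vee$, plus possible extra components. It is precisely the values $m_1=1$, $m_2=2$ that turn this into Crapo's recursion. A simultaneous induction on $n$ does not break the loop. At stage $n$ the hypothesis gives you the degrees of the deletion and contraction duals, which live in $(\PP^{n-1})^\vee$, but not $\deg\mathcal R_L^\vee$ itself. So the identity has two unknowns and forces nothing.

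The comparison only works if you import $\deg\mathcal R_L^\vee=2^d\beta(M(L))$ from an independent source such as \cite{JulianLeonie, Speyer2009}. In that case the lower bounds $m_1\ge 1$, $m_2\ge 2$ and the two containments do force equality and exclude extra components, so the $\subseteq$ half of your Step~2 becomes unnecessary. But the paper's recursive proof of Theorem~\ref{thm: RecipDeg} is then lost. You would also still owe the containment of the contraction cone and the bound $m_2\ge 2$, which you only sketch. For that bound, biduality suffices once $(e_n,q)\in\mathcal N_{\mathcal R_L}$ for general $q\in\mathcal R_{L/\{n\}}^\vee$: either $\mathcal R_L^\vee$ is singular at $q$, or $T_q\mathcal R_L^\vee=\{z_n=0\}$. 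In both cases $F|_{z_n=0}$ is singular along that component. Without the external degree, your claim ``transverse, hence multiplicity one'' along the deletion cone also tacitly assumes $\mathcal R_L^\vee$ is smooth at its general point, which you do not show.

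\textbf{The missing idea.} What makes the paper's argument self-contained is the birational parametrization of the dual from \cite[Proposition~2.8]{JulianLeonie}:
$$
\phi\colon L\times L^\perp\dashrightarrow(\PP^n)^\vee,\qquad (x,y)\mapsto x\star x\star y.
$$
Your curve argument in the last paragraph is heading toward it, since the conormal vectors of $\mathcal R_L$ at $\operatorname{crem}(\ell)$ are exactly $\ell\star\ell\star y$ with $y\in L^\perp$. The last coordinate of $\phi$ is $z_n=x_n^2y_n$, so $\phi^{-1}(H_n)=\mathcal V(x_n)\cup\mathcal V(y_n)$. On $y_n=0$ one has $y\in(L\setminus\{n\})^\perp$, giving $\mathcal R_{L\setminus\{n\}}^\vee$. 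On $x_n=0$ one has $x\in L/\{n\}$, giving $\mathcal R_{L/\{n\}}^\vee$, which is the conormal fiber over $e_n$. This replaces your informal ``two sources of tangency'' dichotomy.

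For the multiplicities, expand $F=F_0+z_nF_1+\cdots$ and pull back:
$$
F_0\circ\phi'=-x_n^2y_n\,(F_1\circ\phi')+O\bigl((x_n^2y_n)^2\bigr).
$$
The gradient identity $\nabla F(\phi(x,y))\propto(1/x_0:\cdots:1/x_n)$ shows that $F_1=\partial_{z_n}F$ is generically nonzero on both divisors. Hence the orders of vanishing are exactly $1$ and $2$. This gives the upper and lower bounds at once, with no degree input, and is what allows Theorem~\ref{thm: RecipDeg} to be derived afterwards.
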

\begin{proof}
By Corollary~\ref{cor: dual deg}, the dual degeneration of $\mathcal R_L^\vee$ is the Gröbner
degeneration  with respect to
$-w = (0,\ldots,0,-1)$, equivalently, one can take
$v = (1,\ldots,1,0)$. By Lemma~\ref{lem: specialWeightLimit},
$$
(\mathcal R_L^\vee)_0
=
\operatorname{Cone}_p(\mathcal R_L^\vee\cap H_n).
$$
It remains to identify $\mathcal R_L^\vee\cap H_n$. By
\cite[Proposition~2.8]{JulianLeonie}, the rational map
$$
\Phi:L\times L^\perp\dashrightarrow
\PP^n\times(\PP^n)^\vee,
\qquad
(x,y)\longmapsto
\bigl(\operatorname{crem}(x),\,x\star x\star y\bigr),
$$
where $\star$ denotes coordinate-wise Hadamard product, parametrizes the
conormal variety~$\mathcal N_{\mathcal R_L}$. The first component
$\operatorname{crem}(x)$ recovers $x$, so $\Phi$ is birational onto its
image. Moreover, by \cite[Ch.\,1, Proposition 3.2]{Gelfand1994}, for non-defective reciprocal linear spaces, the second
projection
$
\pr_2:\mathcal N_{\mathcal R_L}\to\mathcal R_L^\vee
$
is generically one-to-one.
Thus, the following composition
$$
\phi\coloneq \pr_2\circ\Phi:
L\times L^\perp\dashrightarrow(\PP^n)^\vee,
\quad
(x,y)\mapsto x\star x\star y,
$$
is a birational parametrization of the dual $\mathcal R_L^\vee$, which also appears in~\cite[Proposition~4.1]{matsubaraheo2026principalmatroiddeterminants}.
The last coordinate of $\phi(x,y)$ is
$
z_n=x_n^2y_n
$.
Therefore the preimage of $H_n={\cal V}(z_n)$ is the union of the two divisors~$
{\cal V}(x_n)\cup {\cal V}(y_n)
$. If $y_n=0$, then $y\in L^\perp\cap H_n=(L\setminus\{n\})^\perp$, and the
restricted parametrization~$\phi$ gives
$
\mathcal R_{L\setminus\{n\}}^\vee\times\{0\}.
$
If $x_n=0$, then $x\in L\cap H_n$, which identifies with $L/\{n\}$, and
the restricted parametrization~$\phi$ gives
$
\mathcal R_{L/\{n\}}^\vee\times\{0\}.
$
Thus, we obtain~\eqref{eq: RL dual set-theoretic}. 

\smallskip

We now determine the multiplicities. Let
$
F\in \KK[z_0,\ldots,z_n]
$
be the defining equation~of~$\mathcal R_L^\vee$. From the set-theoretic
description above, the initial form  
$
{\rm in}_vF = F|_{z_n=0}$ factorizes as $F_D^{m_1}F_C^{m_2}
$,
where $F_D$ and $F_C$ define the deletion and contraction components,
respectively.
We write
$$
F(z_0,\ldots,z_n)
=
F_0(z')+z_nF_1(z')+z_n^2F_2(z')+\cdots,
\quad
z'=(z_0,\ldots,z_{n-1}),
$$
with
$
F_0=F|_{z_n=0}=F_D^{m_1}F_C^{m_2}.
$
Since $F$ vanishes on the image of~$\phi$
and
$
\phi_n(x,y)=x_n^2y_n
$,
we~get
\begin{equation}\label{eq: pullback}
 F_0(\phi'(x,y))
=
-x_n^2y_n\,F_1(\phi'(x,y))
+
O\bigl((x_n^2y_n)^2\bigr), \quad \phi'(x,y)=(x_0^2y_0,\ldots,x_{n-1}^2y_{n-1}).   
\end{equation}
We now explain why the multiplicities $m_1,m_2$ can be read from the expansion~\eqref{eq: pullback}. The
multiplicity $m_1$, respectively $m_2$, is the order of vanishing of $F_0$ at
a general point of the deletion or contraction component, respectively. Since
$\phi$ is birational onto $\mathcal R_L^\vee$, and its restrictions to
$\mathcal V(y_n)$ and $\mathcal V(x_n)$ are birational onto $\mathcal R_{L\setminus\{n\}}^\vee$ and
$\mathcal R_{L/\{n\}}^\vee$ respectively, this order of vanishing can be computed after pulling back to the
parameter space. Equivalently,
$$
m_1=\operatorname{ord}_{\mathcal V(y_n)}(F_0\circ\phi'),
\quad
m_2=\operatorname{ord}_{\mathcal V(x_n)}(F_0\circ\phi').
$$

\smallskip

It remains to check that $F_1$ is a unit at general points of both
components. Indeed, differentiating
$F(\phi(x,y))=0$ in tangent directions to $L$ and $L^\perp$ gives the following
$$
x\star y\star \nabla F(\phi(x,y))\in L^\perp,
\qquad
x^2\star \nabla F(\phi(x,y))\in L.
$$
At a general point these conditions determine the gradient uniquely up to
scale, namely
\begin{equation}\label{eq: differential}
    \nabla F(\phi(x,y))\propto (1/x_0:\cdots:1/x_n).
\end{equation}
On the deletion component $y_n=0$, we have $x_n\neq0$ generically, so
$F_1 = \partial_{z_n}F\neq0$.
On the contraction component
$x_n=0$, with $y_n\neq0$, taking the projective limit in~\eqref{eq: differential} as
$x_n\to0$ gives
$
\nabla F\propto e_n,
$
so again $F_1 = \partial_{z_n}F\neq0$, and $F_1$ is a unit on both components.

\smallskip

Now~\eqref{eq: pullback} gives the orders immediately. Along ${\cal V}(y_n)$, the function $x_n$
is a unit and $y_n$ is a local equation. Since $F_1$ is a unit,
$F_0\circ\phi'$ has a simple zero. Thus
$
m_1=1
$.
Along ${\cal V}(x_n)$, the function $y_n$ is a unit and $x_n$ is a local
equation. Since the leading term in~\eqref{eq: pullback} is $x_n^2$ times a unit,
$F_0\circ\phi'$ vanishes to order $2$. Thus
$
m_2=2
$, which completes the proof.
\end{proof}
\begin{proof}[Proof of Theorem~\ref{thm: RecipDeg}]
Assume that $\mathcal R_L$ is not dual defective. By \cite[Theorem~2.11]{JulianLeonie}, this happens if and only if $M(L)$ is connected.
We prove the formula by induction on $n$. 

The case $n=1$ is
immediate: $L$ is a point in $\PP^1$, so $\mathcal R_L^\vee$ is a hyperplane
in $(\PP^1)^\vee$, and
$$
\deg(\mathcal R_L^\vee)=1=2^0\beta(U_{1,2}).
$$
We now pass to the induction step. Since $M(L)$ is connected,
$n$ is neither a loop nor a coloop. By Theorem~\ref{thm: limitRLS} and
flatness of the dual Gröbner degeneration, we have the equality:
\begin{equation}\label{eq: recDeg recursive}
\deg(\mathcal R_L^\vee)
=
\deg(\mathcal R_{L\setminus\{n\}}^\vee)
+
2\deg(\mathcal R_{L/\{n\}}^\vee).    
\end{equation}
The deletion $L\setminus\{n\}$ has dimension $\dim L = d$, while the contraction
$L/\{n\}$ has dimension $\dim L - 1 = d-1$. Applying the induction hypothesis to~\eqref{eq: recDeg recursive} gives
$$
\deg(\mathcal R_L^\vee)
=
2^d\beta(M(L\setminus\{n\}))
+
2\cdot 2^{d-1}\beta(M(L/\{n\})) = 2^d\beta(M(L)),
$$
where the last equality follows from the deletion-contraction identity for the beta invariant by
\cite[Theorem~1]{CRAPO1967406}.  This is precisely the claimed degree formula~\eqref{eq: RL degree}.
\end{proof}
\begin{example}
We now illustrate Theorems~\ref{thm: RecipDeg} and~\ref{thm: limitRLS} on a reciprocal linear space. Take
$$
A=
\begin{small}
\begin{pmatrix}
1&0&0&1&1\\
0&1&0&1&2\\
0&0&1&1&3
\end{pmatrix}\end{small},
\quad
L=\operatorname{rowspan}(A)\subset\PP^4.
$$
We consider the Gr\"obner degeneration of $\mathcal R_L$ with respect to the weight~$
\omega=(0,0,0,0,1).
$
The circuit polynomials form a
universal Gr\"obner basis for $I_{\mathcal R_L}$ by~\cite[Theorem~4]{ProudfootSpeyer2006}. Thus,
\begin{align*}
   I_{\mathcal{R}_L} = \bigl( &
   y_1y_2y_3-y_1y_2y_4-2y_1y_3y_4-y_2y_3y_4,\;
   y_0y_1y_2-3y_0y_1y_4-2y_0y_2y_4-y_1y_2y_4,
       \\
      &
   y_0y_2y_3-2y_0y_2y_4-y_0y_3y_4+y_2y_3y_4,\;
   y_0y_1y_3-3y_0y_1y_4+y_0y_3y_4+2y_1y_3y_4
   \bigr).
\end{align*}
A direct computation gives the minimal prime decomposition
$
 {\rm in}_\omega I_{\mathcal R_L}
=
(y_4,I_D)\cap I_C,
$
where
\begin{align*}
  I_D & = I_{\mathcal R_{L\setminus\{4\}}} =
\bigl(
y_0y_1y_3+y_0y_2y_3+y_1y_2y_3-y_0y_1y_2
\bigr), \\
I_C & =
I_{\mathcal R_{L/\{4\}}} =
\bigl(
y_1y_2+2y_1y_3+y_2y_3,\,
3y_0y_1-y_0y_3-2y_1y_3,\,
y_2y_3-2y_0y_2-y_0y_3
\bigr).
\end{align*}
which are the deletion and contraction components, respectively.
Thus, the special fiber decomposes as
$
(\mathcal R_L)_0
=
\bigl(H_4\cap\mathcal R_{L\setminus\{4\}}\bigr)
\cup
\mathcal R_{L/\{4\}}
$.
The dual hypersurface $\mathcal R_L^\vee$ has defining polynomial $\Delta_{\mathcal R_L}$ of degree $12$ containing $1818$ terms. This agrees with the statement of~Theorem~\ref{thm: RecipDeg}:
$$
\deg(\mathcal R_L^\vee)= 2^{\dim L}\beta(M(L)) = 2^2\cdot \beta(U_{3,5})=2^2 \cdot {5-2 \choose 3-1} =2^2\cdot3=12.
$$
The initial form of $\Delta_{\mathcal R_L}$ with respect to the opposite weight $-\omega$ factors as
$$
\operatorname{in}_{-\omega}(\Delta_{\mathcal R_L}) = \Delta_{\mathcal R_{L\setminus\{4\}}}\,(\Delta_{\mathcal R_{L/\{4\}}})^2,$$ where
 $\Delta_{\mathcal R_{L\setminus\{4\}}}$ and $\Delta_{\mathcal R_{L/\{4\}}}$ are both of degree $4$.
This agrees with Theorem~\ref{thm: limitRLS}. \hfill~$\diamond$  \end{example}

\subsection*{Toric degenerations}\label{sec: Grass}

The purpose of this subsection is to illustrate that toric
degenerations are not automatically simple from the point of view of
limiting conormal cycles. For a toric variety, the decomposition into
torus orbits is a Whitney stratification, as discussed in~\cite[\S 11.5.B]{Gelfand1994} and \cite{ElHilanyHelmerTsigaridas2025}.
However, this stratification need not be minimal. Moreover, 
the Whitney
stratification of the total family compatible with the special fiber may
refine the torus-orbit stratification. Additionally, not every torus orbit should be expected to give a component of the
limiting conormal cycle. Therefore, even when the special fiber is toric, the description of the limiting conormal cycle is not straightforward. We illustrate this with a toric degeneration of $\Gr(1,\PP^3)$.


\begin{example}
\label{ex: GT toric}
The Grassmannian $\Gr(1,\PP^3)\subset \PP^5$ is the quadric
$
{p_{14}p_{23}-p_{13}p_{24}+p_{12}p_{34}=0}
$.
Consider its flat degeneration ${\cal X}$ with special fiber
$
X(1,3)=\mathcal V(p_{14}p_{23}-p_{13}p_{24}),
$ which is 
a toric hypersurface with singular locus
$
L=\mathcal V(p_{13},p_{24},p_{14},p_{23})
$. It is the only singular torus orbit of~$X(1,3)$.
However, the Whitney stratification of the total family 
refines $L$ by the~points
$$
p_1=L\cap \mathcal V(p_{12}), \quad
p_2=L\cap \mathcal V(p_{34}). $$
These points arise from singularities of the total family at $t=0$.
Thus, by Theorem~\ref{thm: limit conormal strata},
$$
[(\mathcal N_{\mathcal{X}/\mathbb A^1})_0]
=
[\mathcal N_{X(1,3)}]
+
m_L[\mathcal N_L]
+
m_{p_1}[\mathcal N_{p_1}]
+
m_{p_2}[\mathcal N_{p_2}].
$$
A direct computation in \texttt{Macaulay2} gives
$
m_L=2, \, m_{p_1}=m_{p_2}=1
$, see~\cite{zenodo}.
\end{example}

The example above is the first case of the \emph{Gelfand-Tsetlin toric
degeneration}~\cite[Sec.~14]{miller2005combinatorial} of the Grassmannian $\Gr(k,\PP^n)$. This is a Gröbner
degeneration with respect to a certain weight~$\omega$. In general, the
Gelfand-Tsetlin special fibers $X(k,n)$ are \emph{Grassmann-Hibi toric
varieties}: their toric ideals are the \emph{Hibi ideals}~\cite{Hibi1987} of the distributive
lattices of Plücker variables, and their polytopes are \emph{marked order
polytopes}~\cite{Ardila2011}. The singular orbit closures of $X(k,n)$ are themselves
smaller Hibi varieties, and are described combinatorially~in~\cite{Brown2010Singular}.

\smallskip

Example~\ref{ex: GT toric} suggests that the minimal Whitney stratification of the Gelfand-Tsetlin family is obtained by
refining the singular torus-orbit strata using the lower-order terms of the
Plücker relations. Namely, for a Plücker relation $P$, define its
$\omega$-tail by
$$
\operatorname{tail}_\omega(P)
=
\operatorname{in}_\omega\bigl(P-\operatorname{in}_\omega(P)\bigr).
$$
We expect that  the extra components of the corresponding limiting conormal
cycle arise from singular torus-orbit strata and from their nonempty intersections
with such tail hypersurfaces.

\begin{problem}\label{prob: GT}
Describe the minimal Whitney stratification of the Gelfand-Tsetlin family
compatible with its toric special fiber, and compute the corresponding
conormal multiplicities.
\end{problem}



\section{Relative Higher Associated Hypersurfaces}\label{sec: HighAssociated}
In this section we study higher associated hypersurfaces under flat Gröbner
degenerations. 
Let
$
\pi:\mathcal X\to\Ab^1
$
be a flat Gröbner family with reduced irreducible general fibers
$
X_t\subset\PP^n
$. Assume also that the higher associated variety $\mathcal Z_i(X_t)$~\eqref{eq: assocHyper} is a
hypersurface for every~$t\neq0$. We first define the \emph{higher associated hypersurface  relative  to  $\pi$},
in analogy with 
Definition~\ref{def: rel conormal}.
\begin{definition}
  The \emph{higher associated
hypersurface relative  to $\pi$} is defined as
$$
\mathcal Z_i(\mathcal X/\Ab^1)
\coloneqq
\overline{
\bigcup_{t\in \Ab^1\setminus\{0\}}\mathcal Z_i(X_t)
}
\subset
\Gr(n-d+i-1,\PP^n)\times\Ab^1.
$$
\end{definition}
\noindent
The goal is to describe the special fiber
$
\bigl(\mathcal{Z}_i(\mathcal X/\mathbb A^1)\bigr)_0 
$.
This provides a natural generalization of
Theorem~\ref{thm: limit conormal strata} to higher associated
hypersurfaces. In particular, it tells us that, in order to determine the
decomposition of the special fiber
$\bigl({\cal Z}_i(\mathcal X/\mathbb A^1)\bigr)_0$, it is enough to know the
minimal Whitney stratification of the family ${\cal X}$, 
together
with the multiplicities arising from Sabbah's formula~\eqref{eq: sabbah}.
One of the key roles in this transition is played by the 
\emph{Cayley trick}:
\begin{equation}\label{eq: cayleytrick}
 \overline{\rho^{-1} ({\cal Z}_i(X))}
=
Y^\vee, \quad \text{with} \;\; Y = \sigma_i(X\times \mathbb P^{d-i}),   
\end{equation}
where
$
\sigma_i:\mathbb P^n\times \mathbb P^{d-i}
\hookrightarrow
\mathbb P^{(n+1)(d-i+1)-1}
$
is the Segre embedding and  the Plücker  map
$$
\rho:
\mathbb P\!\left(\operatorname{Mat}_{(d-i+1)\times(n+1)}\right)
\dashrightarrow
\operatorname{Gr}(n-d+i-1,\mathbb P^n)
$$
sends Stiefel coordinates to Plücker coordinates. See~\cite[Chapter~3, Theorem 2.7]{Gelfand1994}. 

\smallskip

We will use the following notation for the weight induced on Plücker
coordinates. Let
$
p_I$ be the Plücker coordinates on
$\Gr(n-d+i-1,\PP^n)$. Given the weight
$\omega=(\omega_0,\ldots,\omega_n)\in\mathbb Z^{n+1}$, we denote by
$
\omega^{(i)}\in \mathbb Z^{\binom{n+1}{d-i+1}}
$
the induced Plücker weight defined by
$
\omega^{(i)}(p_I)
\coloneqq
\sum_{k\in I}\omega_k$. Let again $\mathcal S$ be the minimal Whitney stratification of the total space
$\mathcal X$ compatible with $X_0$, and
$$
\mathcal S_0=\{S\in\mathcal S\mid S\subseteq X_0\}.
$$
\begin{theorem}\label{thm:higher-associated-limit}
Consider a flat Gr\"obner degeneration
$
\pi:\mathcal X\to\Ab^1
$
with respect to the weight $\omega\in\mathbb Z^{n+1}$, with fibers
$X_t\subset\PP^n$ of dimension $d$, and fix
$i\in\{0,\ldots,d\}$. Assume that the higher associated variety
$\mathcal Z_i(X_t)$ is a hypersurface for all $t\neq 0$. Then the
induced family
$
\mathcal Z_i(\mathcal X/\Ab^1)
$
is a flat Gr\"obner degeneration with respect to the induced 
weight $-\omega^{(i)}$ on $\CC[p_I]$.
Moreover, 
\begin{equation}\label{eq: higher associated limit}
 \bigl[
\bigl(\mathcal Z_i(\mathcal X/\Ab^1)\bigr)_0
\bigr]
=
\sum_{\substack{
S\in\mathcal S_0,\ j_S\geq 0\\
j_S\leq \dim S-\codim (\overline S)^\vee+1
}}
m_S
\bigl[
\mathcal Z_{j_S}(\overline S \, )
\bigr],
\quad
j_S=i-d+\dim S,   
\end{equation}
where the multiplicities $m_S$ are computed via Sabbah's
formula~\eqref{eq: sabbah} on $\mathcal X$.
\end{theorem}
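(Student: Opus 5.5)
The plan is to reduce everything to the case of dual hypersurfaces via the Cayley trick~\eqref{eq: cayleytrick}, and then to apply Proposition~\ref{prop: conormal deg}, Theorem~\ref{thm: limit conormal strata}, Theorem~\ref{thm: sabbah mult} and Proposition~\ref{prop: multiplicities in dual} to the Segre family.

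\textbf{Step 1: the Segre family.} Consider the family $\mathcal Y=\sigma_i(\mathcal X\times\PP^{d-i})\subset\PP^{(n+1)(d-i+1)-1}\times\Ab^1$. This is the Gröbner degeneration of $Y=\sigma_i(X\times\PP^{d-i})$ with respect to the weight $\tilde\omega$ assigning $\omega_k$ to the Segre coordinate $y_k u_j$. Indeed, the vanishing ideal of $Y$ is generated by $I_X$ in each column together with the $2\times2$ Segre minors, and the latter are $\tilde\omega$-homogeneous. Alternatively, we can use the torus action $\phi_t$ on the first factor directly.

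By Corollary~\ref{cor: dual deg}, $\mathcal Y^\vee$ is then the Gröbner degeneration of $Y^\vee$ with weight $-\tilde\omega$ on the Stiefel matrix entries $a_{jk}$. Under $\rho$, the Plücker coordinate $p_I$ is a maximal minor and is homogeneous of $\tilde\omega$-weight $\sum_{k\in I}\omega_k$. Hence $\rho$ is equivariant for the torus actions with weights $-\tilde\omega$ and $-\omega^{(i)}$. Since $\rho^{-1}(\mathcal Z_i(X_t))$ is dense in $(Y_t)^\vee$ for $t\neq0$, taking closures over $\CC^\times$ shows that $\mathcal Z_i(\mathcal X/\Ab^1)$ is the $-\omega^{(i)}$-Gröbner degeneration. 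We also need flatness, and that the special fibers correspond: $\overline{\rho^{-1}((\mathcal Z_i)_0)}=(\mathcal Y^\vee)_0$ as cycles. For this, I would use that $\rho$ is a principal $GL_{d-i+1}$-bundle over its domain, so it is smooth. Pullback of cycles along a smooth map commutes with specialization and preserves multiplicities. The locus where $\rho$ is undefined (rank-deficient matrices) has codimension at least $2$ and so does not affect divisors.

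\textbf{Step 2: stratify and apply Sabbah.} Take $\mathcal S\times\{\PP^{d-i}\}$, embedded via $\sigma_i$, as the stratification of $\mathcal Y$. By Remark~\ref{rem: whitney properties} it is Whitney, and by Lemma~\ref{lem: Whitney for Grobner} it stratifies the map. It may not be minimal, but the proof of Theorem~\ref{thm: limit conormal strata} and Sabbah's formula only need a Whitney stratification of the map compatible with the special fiber.

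Each special stratum has the form $S\times\PP^{d-i}$. Its local Milnor fiber is $F_x\times(\text{ball})$, and its Euler obstructions are those of $\overline S$, since we are taking a product with a smooth factor. The dimensions shift by the constant $d-i$, which cancels in the signs $(-1)^{d}(-1)^{\dim T}$. So the triangular system~\eqref{eq: sabbah} for $\mathcal Y$ coincides with that for $\mathcal X$, and the multiplicities are the same $m_S$. Proposition~\ref{prop: multiplicities in dual} then gives
$$[(\mathcal Y^\vee)_0]=\sum m_S\,[(\overline S\times\PP^{d-i})^\vee],$$
where the sum runs over the non-defective products.

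\textbf{Step 3: translate back.} Applying the Cayley trick to $\overline S$, of dimension $\dim S$, gives $(\overline S\times\PP^{d-i})^\vee=\overline{\rho^{-1}\mathcal Z_{j}(\overline S)}$ with $\dim S-j=d-i$, that is, $j=j_S$. This requires $0\le j_S$. Non-defectivity of the Segre product is equivalent to $\mathcal Z_{j_S}(\overline S)$ being a hypersurface, which by the criterion recalled after Definition~\ref{def: assocHyper} means $j_S\le\dim S-\codim(\overline S)^\vee+1$. When $j_S<0$, the product has dimension $\dim S+d-i<d-i+\dim S$ relative to the Grassmannian requirement, and $\PP^{d-i}$ is too large. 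I expect this case to be dual defective, giving zero contribution; this needs a check, via the classical criterion that $X\times\PP^m$ is non-defective iff $m\le\dim X-\codim X^\vee+1$ together with the fact that $\codim(\overline S)^\vee\ge1$. Descending through $\rho$ as in Step 1 yields~\eqref{eq: higher associated limit}.

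\textbf{Main obstacle.} I expect the hardest step to be the compatibility of $\rho$ with flat limits and multiplicities in Step 1, namely that the fiberwise identity of the Cayley trick persists at $t=0$ as an identity of cycles. The plan is to argue on the open set of full-rank matrices, where $\rho$ is smooth, and to invoke flat pullback of cycles commuting with specialization (Fulton, Ch.~10).
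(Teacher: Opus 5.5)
Your proposal follows the paper's proof essentially step for step: the Segre family as the $\widetilde\omega$-Gröbner degeneration, Corollary~\ref{cor: dual deg} with the Cayley trick and the weight of the minors $p_I$, the product stratification $\mathcal S\times\PP^{d-i}$ fed into Proposition~\ref{prop: multiplicities in dual}, and the invariance of Sabbah's system under the dimension shift by $d-i$. Your smooth-pullback argument for $\rho$ on full-rank matrices, and your remark that minimality of the stratification is not needed, make explicit what the paper passes over in one line.

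One slip: the Segre criterion is $\codim X^\vee-1\le m\le\dim X$, not $m\le\dim X-\codim X^\vee+1$. Your version already fails for a line in $\PP^3$ with $m=1$, whose Segre product is non-defective. With $m=d-i$, the correct criterion gives both bounds $0\le j_S\le\dim S-\codim(\overline S)^\vee+1$ at once, which is how the paper invokes \cite[Ch.~1, Cor.~5.9]{Gelfand1994}. So no separate treatment of the case $j_S<0$ is needed.
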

Before proving Theorem~\ref{thm:higher-associated-limit}, let us record
several immediate consequences.
\begin{corollary}\label{cor:higher-associated-no-extra}
Assume that $X_0$ is reduced and regular in codimension $i$. Then no singular
stratum contributes to the decomposition in
Theorem~\ref{thm:higher-associated-limit}. Hence
$
\bigl[
\bigl(\mathcal Z_i(\mathcal X/\mathbb A^1)\bigr)_0
\bigr]
=
[\mathcal Z_i(X_0)].
$
\end{corollary}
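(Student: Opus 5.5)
We derive the corollary from the decomposition \eqref{eq: higher associated limit} of Theorem~\ref{thm:higher-associated-limit}, by showing that only the strata of $\mathcal S_0$ contained in the smooth locus of $X_0$ survive the index constraints in the sum. Those strata then have to be matched with the irreducible components of $X_0$.

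First we isolate which strata can contribute at all. The sum in \eqref{eq: higher associated limit} only involves strata $S\in\mathcal S_0$ with $j_S=i-d+\dim S\geq 0$, that is, with $\dim S\geq d-i$, equivalently $\codim_{X_0}\overline S\leq i$. The hypothesis that $X_0$ is regular in codimension $i$ says that $\operatorname{Sing}(X_0)$ has codimension at least $i+1$ in $X_0$. Hence every such $S$ is not contained in $\operatorname{Sing}(X_0)$.

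The step that needs care is that a stratum $S\subseteq X_0$ could a priori meet both the smooth and the singular locus of $X_0$, or be a lower-dimensional stratum lying inside $X_0^{\rm sm}$. To rule out the first possibility, we argue that $\operatorname{Sing}(X_0)$ is a union of strata of $\mathcal S_0$, so each stratum lies either in $X_0^{\rm sm}$ or in $\operatorname{Sing}(X_0)$. Whitney condition (a) applied to $X_0$, which is a union of strata, forces the singular locus to be a union of strata; this is the standard fact that the minimal stratification refines the singular locus. For the second possibility, we use reducedness: since $X_0$ is reduced, the family is smooth near $X_0^{\rm sm}$, so $\mathcal X$ is smooth there and $\pi$ is a submersion. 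By minimality of $\mathcal S$, no strata of $\mathcal X$ other than the open parts of the components of $X_0$ are needed over $X_0^{\rm sm}$. Consequently every stratum with $j_S\geq0$ is an open dense subset of a top-dimensional component $X_0^{(k)}$, with $\dim S=d$ and $j_S=i$.

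Next we identify the surviving terms. By Corollary~\ref{cor: top components multiplicity}, each such top-dimensional component has $m_S=1$ since $X_0$ is reduced. It contributes $\mathcal Z_i(X_0^{(k)})$, provided the inequality $i\leq d-\codim (X_0^{(k)})^\vee+1$ holds, that is, provided $\mathcal Z_i(X_0^{(k)})$ is a hypersurface; components for which it fails are omitted, consistent with the convention for defective terms. Summing over the components gives the cycle $[\mathcal Z_i(X_0)]$, interpreted, as in the remark following Corollary~\ref{cor: dual deg}, as the sum over the irreducible components of $X_0$.

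We expect the main obstacle to be the justification that no strata other than the open parts of the components of $X_0$ appear inside $X_0^{\rm sm}$. To handle it we use Lemma~\ref{lem: Whitney for Grobner} together with minimality: near a point of $X_0^{\rm sm}$ the pair $(\mathcal X,X_0)$ is locally a product with $X_0^{\rm sm}$, so the stratification $\{\mathcal X\setminus X_0, X_0^{\rm sm}\}$ is already Whitney there.
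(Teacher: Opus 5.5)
Your argument is correct and is essentially the deduction the paper intends: the paper records this corollary as an ``immediate consequence'' of Theorem~\ref{thm:higher-associated-limit} without a written proof. The mechanism is the same as yours: regularity in codimension $i$ gives $j_S<0$ for every stratum in $\operatorname{Sing}(X_0)$, and the surviving top-dimensional components enter with $m_S=1$ by Corollary~\ref{cor: top components multiplicity}.

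You add a check the paper leaves unstated, namely that no lower-dimensional strata sit inside $X_0^{\rm sm}$, using smoothness of $\pi$ there together with minimality. One justification should be cited differently. That $\operatorname{Sing}(X_0)$ is a union of strata is better credited to the canonical construction of the minimal stratification, or to Hironaka's equimultiplicity along Whitney strata, than to condition (a) alone.
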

\noindent
 For $i=0$ and $i=1$, this recovers limits of Chow and Hurwitz forms, studied in~\cite{Dalbec1995, sturmfels2016hurwitzformprojectivevariety}, respectively. In the two cases below  the limit is
independent of the chosen degeneration. Thus, Chow and Hurwitz forms may
be defined intrinsically for reducible algebraic cycles. 
\begin{corollary}[{\cite{Dalbec1995}}]\label{cor:chow-hurwitz-cycles}
Let
$
[X_0]=\sum_\alpha n_\alpha [Y_\alpha]
$
be a pure $d$-dimensional algebraic cycle in $\PP^n$. For any flat Gröbner
degeneration $\mathcal X\to\mathbb A^1$ whose special fiber has
fundamental cycle $[X_0]$, the special fiber of the relative Chow form
satisfies
$$
\bigl(\operatorname{Ch}(\mathcal X/\mathbb A^1)\bigr)_0
=
\prod_\alpha \operatorname{Ch}(Y_\alpha)^{n_\alpha}.
$$
\end{corollary}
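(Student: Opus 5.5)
The plan is to prove the statement by a direct specialization argument through the Chow incidence correspondence, which handles a non-reduced $X_0$ cleanly. Afterwards I will check that the result is consistent with Theorem~\ref{thm:higher-associated-limit} for $i=0$.

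\emph{What the theorem gives directly.} Setting $i=0$ gives $j_S=\dim S-d$. The constraint $j_S\ge 0$ then forces $\dim S=d$, so only the top-dimensional strata contribute. Their closures are the components $Y_\alpha$ of $X_0^{\rm red}$, and they enter with $\mathcal Z_0(Y_\alpha)$, the Chow hypersurface of $Y_\alpha$. The second constraint reads $0\le d-\codim(Y_\alpha^\vee)+1$. This always holds, because the dual of a $d$-dimensional variety has dimension at least $n-d-1$. Hence the theorem yields $\bigl[(\operatorname{Ch}(\mathcal X/\Ab^1))_0\bigr]=\sum_\alpha m_{S_\alpha}[\mathcal Z_0(Y_\alpha)]$, and it remains to prove $m_{S_\alpha}=n_\alpha$. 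When $X_0$ is reduced this is Corollary~\ref{cor: top components multiplicity}. In general, Sabbah's system \eqref{eq: sabbah} at a general point $x\in S_\alpha$ has a single term, because the only stratum $T$ with $S_\alpha\subseteq\overline T$ is $S_\alpha$ itself and $\operatorname{Eu}_{Y_\alpha}(x)=1$. So it reads $m_{S_\alpha}=\chi(F_x,\operatorname{Eu}_{\mathcal X})$. Evaluating this Milnor-fiber Euler characteristic as $n_\alpha$ at a non-reduced generic point is the delicate step: the local Milnor fiber maps to a small ball in $Y_\alpha$, finitely and with degree $n_\alpha$, but with possible branching, and one must also control $\operatorname{Eu}_{\mathcal X}$ on it. I therefore want to avoid Sabbah's formula altogether.

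\emph{Incidence-correspondence argument.} Let $G=\Gr(n-d-1,\PP^n)$ and let $\Phi=\{(x,L)\mid x\in L\}\subset\PP^n\times G$ be the incidence variety. The first projection $p\colon\Phi\to\PP^n$ is a Grassmannian bundle, hence smooth and flat. I will carry out three steps.
\begin{enumerate}
\item Form $\widetilde{\mathcal X}=(p\times\mathrm{id})^{-1}(\mathcal X)\subset\Phi\times\Ab^1$. By flatness of $p$, it is flat over $\Ab^1$ and has fibers $p^{-1}(X_t)$. The total space $\widetilde{\mathcal X}$ is irreducible, since it is a bundle over the irreducible $\mathcal X$.
\item Flat pullback commutes with taking fundamental cycles of fibers (specialization), as in~\cite[Ch.~10]{Fulton1998}. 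This gives $[\widetilde X_0]=p^*[X_0]=\sum_\alpha n_\alpha[p^{-1}(Y_\alpha)]$.
\item Push forward by the second projection $q\colon\Phi\to G$. For every $t$, and for each $Y_\alpha$, the restriction of $q$ to $p^{-1}(X_t)$, respectively $p^{-1}(Y_\alpha)$, is birational onto its Chow hypersurface. Indeed, a general $(n-d-1)$-plane meeting a $d$-dimensional variety meets it in exactly one point. Proper pushforward commutes with specialization of cycles. Hence the special fiber of the relative Chow hypersurface, which is the closure of $\bigcup_{t\neq0} q(p^{-1}(X_t))$, has cycle $q_*[\widetilde X_0]=\sum_\alpha n_\alpha[\operatorname{Ch}(Y_\alpha)]$.
\end{enumerate}
By the first part of Theorem~\ref{thm:higher-associated-limit}, the relative Chow hypersurface is a Gröbner degeneration, i.e.\ a hypersurface family given by the initial form of the Chow form. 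A hypersurface is determined by its cycle, so the special fiber is $\prod_\alpha\operatorname{Ch}(Y_\alpha)^{n_\alpha}$.

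\emph{Main obstacle and independence.} The step needing care is step~3: matching the scheme-theoretic closure $\overline{\bigcup_{t\ne0}\operatorname{Ch}(X_t)}$ with $q_*$ of the flat family $\widetilde{\mathcal X}$. Both are cycles on $G\times\Ab^1$ of relative dimension $\dim G-1$, and they agree over $t\neq0$. By flatness of each family, their special fibers are the specialized cycles, which are determined by the general fibers. Consequently the final formula depends only on the cycle $[X_0]$ and not on the chosen degeneration. Comparing with the Sabbah form above also gives, as a byproduct, $m_{S_\alpha}=n_\alpha$.
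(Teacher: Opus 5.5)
Your proof is correct, and it takes a genuinely different route from the paper. The paper gives no separate argument. It reads the statement off Theorem~\ref{thm:higher-associated-limit} at $i=0$ (Cayley trick, Whitney strata of $\mathcal X$, Sabbah multiplicities), exactly as in your first paragraph: $j_S=\dim S-d\geq 0$ discards every stratum of dimension $<d$ and leaves $\sum_\alpha m_{S_\alpha}[\mathcal Z_0(Y_\alpha)]$. The identification $m_{S_\alpha}=n_\alpha$ is left implicit there, since Corollary~\ref{cor: top components multiplicity} only treats reduced $X_0$.

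Your incidence-correspondence argument is the classical specialization argument for Chow forms. It uses flat pullback along the Grassmannian bundle $p$, compatibility of specialization with proper pushforward along $q$ as in \cite[Ch.~10]{Fulton1998}, birationality of $q$ on each $p^{-1}(Y_\alpha)$, and the fact that a divisor on the factorial Grassmannian is determined by its Weil divisor. It buys generality and transparency: it works for any flat family with integral total space, never uses the Gr\"obner structure, Whitney stratifications or Milnor fibers, and produces the exponents $n_\alpha$ directly. Comparing with the theorem even yields $m_{S_\alpha}=n_\alpha$ as a byproduct. You also do not need the Gr\"obner part of the theorem in the last step. The closure of $\bigcup_{t\neq0}\operatorname{Ch}(X_t)$ is an integral divisor in $G\times\Ab^1$ not containing $G\times\{0\}$, so its special fiber is an effective Cartier divisor on $G$ and is determined by its cycle.

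What the paper's route buys is uniformity. Your argument works because the condition $x\in L$ is pulled back from $\PP^n$ and so specializes naively. For $i\geq1$ the condition involves $T_xX_t$ and acquires contributions from lower-dimensional singular strata. That is exactly what the Sabbah machinery detects in Corollary~\ref{cor: chhurdeg}, and your method does not extend there.

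Finally, the step you found delicate does close. At a general $x\in Y_\alpha$, consider the closures of $\operatorname{Sing}(\mathcal X)\cap\{t\neq0\}$ and of the ramification locus of a local finite linear projection of $\mathcal X$ onto $\CC^d\times\Ab^1$. Both meet $\{t=0\}$ in dimension at most $d-1$. Hence $F_x$ lies in the smooth locus of $\mathcal X$ and is an unramified $n_\alpha$-sheeted cover of a $d$-ball, so $\chi(F_x,\operatorname{Eu}_{\mathcal X})=n_\alpha$.
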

\begin{corollary}\label{cor: chhurdeg}
Let
$
[X_0]=\sum_\alpha [Y_\alpha]
$
be a reduced pure $d$-dimensional algebraic cycle in $\PP^n$, with each
$Y_\alpha$ regular in codimension one. Let $Z$ run over the
codimension-one components of the singular locus $\operatorname{Sing}(X_0)$. Suppose
$\mathcal X\to\mathbb A^1$ is a flat Gröbner degeneration with smooth
general fiber and special fiber $X_0$. Then the
special fiber of the relative Hurwitz form satisfies
$$
\bigl(\operatorname{Hu}(\mathcal X/\mathbb A^1)\bigr)_0
=
\prod_\alpha \operatorname{Hu}({Y_\alpha})
\prod_Z \operatorname{Ch}({Z})^{2\delta_p(C_0)}, 
$$
where $p\in Z$ is a general point and $L$ is a general linear space
of codimension $d-1$ through $p$, such that
$
C_0:=X_0\cap L
$
is a sectional curve with the delta invariant $\delta_p(C_0)$ of $p$.
\end{corollary}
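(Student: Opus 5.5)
We would obtain the corollary by specializing Theorem~\ref{thm:higher-associated-limit} to $i=1$, so that $\mathcal Z_1$ is the Hurwitz hypersurface. Then $j_S=1-d+\dim S$, and only strata $S\in\mathcal S_0$ with $j_S\geq 0$ contribute. These are the strata with $\dim S\in\{d,d-1\}$.

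For $\dim S=d$ we get $j_S=1$. These strata are the dense open strata of the components $Y_\alpha$. Since $X_0$ is reduced, Corollary~\ref{cor: top components multiplicity} gives $m_S=1$, which produces the factor $\prod_\alpha\operatorname{Hu}(Y_\alpha)$. For $\dim S=d-1$ we get $j_S=0$. The contribution is then $m_S[\mathcal Z_0(\overline S)]$, i.e.\ a Chow form of $\overline S$, since the condition $0\leq \dim S-\codim(\overline S)^\vee+1$ is automatic. Because each $Y_\alpha$ is regular in codimension one, every $(d-1)$-dimensional stratum of $X_0$ lies in $\operatorname{Sing}(X_0)$, and is the dense stratum of some codimension-one component $Z$. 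So it remains to show $m_Z=2\delta_p(C_0)$ for a general $p\in Z$.

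To compute $m_Z$ we use Corollary~\ref{cor: smooth general fibers} at a general point $p\in Z$. The strata $T$ with $Z\subseteq\overline T$ are the top strata of the components $Y_\alpha\supseteq Z$, together with $Z$ itself. Formula~\eqref{eq: sabbah new} becomes
$$
\chi(F_p)=\sum_{\alpha:\,Y_\alpha\ni p}\operatorname{Eu}_{Y_\alpha}(p)-m_Z ,
$$
which we solve as $m_Z=\sum_\alpha \operatorname{Eu}_{Y_\alpha}(p)-\chi(F_p)$.

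Next we reduce to curves. Take a general linear space $L$ of codimension $d-1$ through $p$. By Remark~\ref{rem: whitney properties}, $L\times\mathbb A^1$ is transverse to the Whitney strata of $\mathcal X$ near $p$. Hence we obtain a one-parameter smoothing $\mathcal X\cap(L\times\mathbb A^1)$ of the reduced curve germ $(C_0,p)$. Its Milnor fiber is $F_p\cap L$, and by the standard transversal-slice property we have $\chi(F_p)=\chi(F_p\cap L)$, since the normal slice of a stratum carries the local topology. We also use that Euler obstructions are invariant under general transversal slices (Lê--Teissier), so $\operatorname{Eu}_{Y_\alpha}(p)=\operatorname{Eu}_{Y_\alpha\cap L}(p)$.

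Since $Y_\alpha$ is smooth at a general point of $Z$, each $Y_\alpha\cap L$ is a union of smooth branches at $p$. Counting over all $\alpha$, the branches of $C_0$ at $p$ number $r_p$, each with Euler obstruction $1$, so $\sum_\alpha\operatorname{Eu}_{Y_\alpha}(p)=r_p$. For the Milnor fiber of a smoothing of a reduced curve singularity we use the Buchweitz--Greuel formula $b_1(F)=\mu=2\delta_p-r_p+1$, together with connectedness of $F$. This gives $\chi(F_p)=1-\mu=r_p-2\delta_p$, and therefore $m_Z=r_p-(r_p-2\delta_p)=2\delta_p(C_0)$. This yields the factor $\operatorname{Ch}(Z)^{2\delta_p(C_0)}$, and assembling the three contributions gives the stated product.

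The main obstacle is the slicing step. We must check that the general $L$ is simultaneously transverse to all strata of the total family, including vertical strata inside $X_0$. We must then check that this transversality legitimately transports both the Milnor fiber Euler characteristic and the Euler obstructions to the curve germ. Here we would first invoke the product structure of Whitney stratifications along strata (Thom's isotopy lemma applied to a normal slice). We would also need the Buchweitz--Greuel formula for possibly non-planar curve germs, rather than Milnor's planar formula~\eqref{eq: Milnor formula plane}.
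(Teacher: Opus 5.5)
Your proposal is correct and follows the paper's proof essentially step by step. Both arguments apply Sabbah's formula at a general $p\in Z$ with all Euler obstructions equal to one, reduce $\chi(F_p)$ to the Milnor fiber of the sectional curve $C_0$ via a normal slice, and use $\chi=1-\mu_p(C_0)=r_p-2\delta_p(C_0)$ for a smoothing of a reduced curve germ. Your extra appeal to Lê--Teissier slicing invariance of $\operatorname{Eu}$ is not needed, since each $Y_\alpha$ is already smooth at $p$. Your $j_S$ bookkeeping from Theorem~\ref{thm:higher-associated-limit}, which the paper leaves implicit, is right, and the slicing step you flag is treated in the paper at the same level of rigor.
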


\begin{proof}
 The set-theoretic description follows from
Theorem~\ref{thm:higher-associated-limit}. It remains to compute the exponents $m_Z$ of $\operatorname{Ch}(Z)$.   
Take general $p\in Z$, and let $r_p$ be the number of components
$Y_\alpha$ passing through $p$. Since every $Y_\alpha$ is regular in
codimension one, all $Y_\alpha$ 
are smooth at $p$. Hence all local
Euler obstructions appearing in Sabbah's formula~\eqref{eq: sabbah new} are equal to one. Thus, 
\begin{equation}\label{eq: hurwitz cycle}
  \chi(F_p)
=
(-1)^d\bigl( \, 
\sum_{\alpha:\, p\in Y_\alpha}(-1)^d
+
(-1)^{d-1}m_Z
\bigr)
=
r_p-m_Z.  
\end{equation}
Now choose a general linear space $L$ of codimension $d-1$ through
$p$, which is transverse to~$Z$. By Remark~\ref{rem: whitney properties}, the induced stratification
$\mathcal S\cap L$ is again Whitney. Since $p$ is a general point of
the stratum $Z$, the linear space $L$ is a normal slice to $Z$ at $p$.
Thus, $F_p$ is homeomorphic to the product of a small
ball $B\subset Z$ with the restricted Milnor fiber $F_p\cap L$, see,
for instance, discussion in~\cite{Massey2022}. The latter is the Milnor fiber of the family of sectional curves
$
\pi|_L: \mathcal X\cap (L\times \mathbb A^1)\to \mathbb A^1
$
whose special fiber is
$
C_0=X_0\cap L
$. In particular, we get
$$
\chi(F_p)=\chi(B) \cdot \chi(F_p\cap L)=\chi(F_p\cap L).
$$
The curve $C_0$ has $r_p$ branches. Milnor's formula~\eqref{eq: Milnor formula plane} (see also~\cite[Theorem 2.2]{Bassein1977}) gives
$$
\mu_p(C_0)=2\delta_p(C_0)-r_p+1,
$$
where $\mu_p(C_0)$ is the Milnor number of the reduced curve singularity, see~\cite{BuchweitzGreuel1980} for definition. The
Milnor fiber of $C_0$ is homotopy equivalent to a bouquet of
$\mu_p(C_0)$ circles, and therefore
$$
\chi(F_p)
=
1-\mu_p(C_0)
=
r_p-2\delta_p(C_0).
$$
Comparing this with~\eqref{eq: hurwitz cycle}, we obtain
$
m_Z=2\delta_p(C_0)
$.
This gives the claimed factorization.
\end{proof}

\begin{example}
    We revisit the Gelfand-Tsetlin degeneration of the Grassmannian $\Gr(k,\PP^n)$ from~\eqref{ex: GT toric}.
    The  polytope $P_{k,n}$ of the toric variety $X(k,\PP^n)$ is called the \emph{Gelfand-Tsetlin~polytope}. By~\cite[Theorem~1.4]{Ardila2011}, it is a
marked order polytope, and hence it is normal by~\cite{stanley1986twoposet}.
Thus $X(k,n)$ is normal, in particular regular in
codimension one.
By Corollary~\ref{cor:higher-associated-no-extra}, we
obtain
$$
\bigl(\operatorname{Hu}(\Gr(k,\PP^n))\bigr)_0
=
\operatorname{Hu}(X(k,n)).
$$
Thus, in this case, no extra Chow factors appear in the limit of the Hurwitz form. In particular, the Hurwitz degree of $\Gr(k,\PP^n)$ equals the
Hurwitz degree of the toric variety $X(k,n)$. Let
$$
r=\dim \Gr(k,\PP^n)=(k+1)(n-k).
$$
Since the toric variety $X(k,n)$ is normal, by Bertini's
theorem a general linear section of codimension~$(r-1)$ avoids its singular locus and is a smooth curve.  By~\cite[Theorem~1.1]{sturmfels2016hurwitzformprojectivevariety},
$$
\operatorname{Hdeg}(\Gr(k,\PP^n))
=
\operatorname{Hdeg}(X(k,n))
=
2\deg X(k,n)+2g(X(k,n))-2,
$$
where $g(X(k,n))$ is the sectional genus of $X(k,n)$. Kushnirenko's theorem~\cite{Kushnirenko1976} gives:
$$
\deg X(k,n)=r!\operatorname{Vol}(P_{k,n}).
$$
Moreover, by Khovanskii's formula~\cite[Theorem~1]{Khovanskii1978} for complete intersections,
we obtain
$$
g(X(k,n))
=
1+\sum_{q=1}^{r-1}
(-1)^q
\binom{r-1}{q}
\operatorname{Ehr}(P_{k,n}, -q).
$$
Thus, $\operatorname{Hdeg}(\Gr(k,\PP^n))$ can be read off
combinatorially from the Gelfand-Tsetlin polytope.
   \hfill~$\diamond$ 
\end{example}

The rest of this section is devoted to the proof of
Theorem~\ref{thm:higher-associated-limit}. To translate the problem to
the setting of Section~\ref{sec: dualizing flat}, we use the Cayley
trick~\eqref{eq: cayleytrick} and record two elementary facts: a Gröbner degeneration of a variety $X\subseteq \PP^n$ induces a Gröbner degeneration of its Segre
embedding $\sigma_i(X\times \PP^k) \subseteq \PP^{(n+1)(k+1)-1}$, and the corresponding weights in Stiefel and Plücker coordinates  are related. This is the content of the following two propositions.
\begin{proposition}\label{prop: GbBasisSegre}
Let $\mathcal X\to \mathbb A^1$ be the Gröbner degeneration of
$X\subset \PP^n$ with respect to $\omega$. Set
$$
{Y_t}=\sigma_i(X_t\times \PP^{d-i})
\subset \PP^{(n+1)(d-i+1)-1} \quad \text{with } \; Y\coloneqq Y_1.
$$
Then the family
$
\mathcal Y:=\sigma_i(\mathcal X\times \PP^{d-i})\to \mathbb A^1
$
is the Gröbner degeneration of $Y$ with respect to the weight~$\widetilde{\omega}$ with
$
\widetilde\omega_{kj}=\omega_k
$ for $0\leq k\leq n$ and $0\leq j\leq d-i$
on the Segre coordinates $z_{kj}$.
\end{proposition}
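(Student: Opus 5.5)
The plan is to compare the two families fiberwise over $\CC^\times$ via the torus action, and then identify the closures. As in the proof of Proposition~\ref{prop: conormal deg}, the Gröbner family $\mathcal X$ restricted to $\CC^\times$ is the orbit family $X_t = t^\omega\cdot X$, where $t^\omega\cdot x = [t^{\omega_0}x_0:\cdots:t^{\omega_n}x_n]$. The first step is to check the Segre identity
$$
\sigma_i\bigl(t^\omega\cdot x,\, u\bigr) = t^{\widetilde\omega}\cdot \sigma_i(x,u),
$$
which holds because the Segre coordinate $z_{kj}=x_ku_j$ acquires exactly the factor $t^{\omega_k}=t^{\widetilde\omega_{kj}}$. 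It follows that $Y_t = t^{\widetilde\omega}\cdot Y$ for every $t\neq 0$. Hence $\mathcal Y$ over $\CC^\times$ coincides with the $\widetilde\omega$-orbit family of $Y$.

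The second step is to take closures. The Gröbner degeneration of $Y$ with respect to $\widetilde\omega$ is the closure of $\bigcup_{t\neq0} t^{\widetilde\omega}\cdot Y\times\{t\}$ in $\PP^{(n+1)(d-i+1)-1}\times\Ab^1$, and it is flat with special fiber $\mathcal V(\mathrm{in}_{\widetilde\omega} I_Y)$. For $\mathcal Y$, the space $\sigma_i(\mathcal X\times\PP^{d-i})$ is closed, because $\sigma_i\times\mathrm{id}_{\Ab^1}$ is a closed embedding. It is also irreducible and reduced, since $\mathcal X$ is, and it is flat over $\Ab^1$ because $\mathcal X$ is. The key point is that a flat family has no components over $t=0$, so it equals the closure of its restriction to $\CC^\times$. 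Both families are therefore the closure of the same set, and they agree.

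To make the identification at the level of ideals, which is where I expect the only real care to be needed, I would show directly that
$$
\mathrm{in}_{\widetilde\omega}(I_Y) = I_{\sigma_i(X_0\times\PP^{d-i})},
$$
with the right-hand side taken as the ideal of the scheme $X_0$ pulled back through the Segre map. Concretely, $I_Y$ is generated by the $2\times 2$ minors $z_{kj}z_{k'j'}-z_{kj'}z_{k'j}$ together with the polynomials $g(z_{\bullet j_1},\dots)$, namely multihomogeneous substitutions of $g\in I_X$ into columns. The minors are $\widetilde\omega$-homogeneous. For a Gröbner basis $\{g\}$ of $I_X$, the substitutions have initial forms obtained by substituting $\mathrm{in}_\omega g$. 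Flatness of both sides, through equal Hilbert functions, then forces the resulting inclusion of ideals to be an equality. This comparison of Hilbert functions, using the bigraded structure $\dim (I_Y)_{(a)} \leftrightarrow \dim (I_X)_a\cdot\binom{a+d-i}{d-i}$ and likewise for $X_0$, is the step I expect to require the most bookkeeping. Conceptually, however, the equality is already settled by the closure argument.
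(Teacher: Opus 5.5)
Your proposal is correct and follows the paper's route. The identity $\sigma_i(t^\omega\cdot x,u)=t^{\widetilde\omega}\cdot\sigma_i(x,u)$ gives $Y_t=t^{\widetilde\omega}\cdot Y$ for $t\neq 0$, and taking closures identifies $\mathcal Y$ with the $\widetilde\omega$-Gröbner degeneration of $Y$; you also spell out, via flatness and integrality of $\sigma_i(\mathcal X\times\PP^{d-i})$, why $\mathcal Y$ is that closure, which the paper leaves implicit. The one difference is at the ideal level: the paper cites Sullivant's theorem that the lifts of a Gröbner basis of $I_X$ together with the $2\times 2$ minors form a Gröbner basis of $I_Y$, whereas your containment-plus-Hilbert-function comparison proves the same initial-ideal identity directly.
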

\begin{proof}
Take  $x=[x_0:\cdots:x_n]\in X$ and 
$y=[y_0:\cdots:y_{d-i}]\in \PP^{d-i}$. The Segre embedding is given by
$
z_{kj}=x_k y_j.
$
Over $t\neq 0$, the Gröbner degeneration of $X$ is induced by the
torus action $$x_k\mapsto t^{\omega_k}x_k,$$ while the $y$-variables  are fixed.
Hence
$
z_{kj}=x_k y_j
\mapsto
t^{\omega_k}x_k y_j
=
t^{\widetilde\omega_{kj}}z_{kj}.
$
Thus each fiber $Y_t$ is obtained from the variety $Y$ by the torus action with the weight
$\widetilde\omega$. Taking the closure  gives the full
Gröbner degeneration of $Y$ with respect to $\widetilde\omega$. Equivalently, on the level of ideals, by~\cite[Theorem~2.9]{sullivant2006}, if $G$ is a 
Gröbner basis of $I_X$, then the lifts of
the elements of~$G$, together with the $2\times 2$ minors of the
Segre matrix $(z_{kj})$, form a Gröbner basis of $I_Y$.
\end{proof}
We now pass from dual Stiefel coordinates to Plücker coordinates. We write
$$
a_{jk}, \quad \text{for} \quad 0\leq j\leq d-i,\quad 0\leq k\leq n,
$$
for the 
coordinates on the dual projective space 
$
\left(\PP^{(n+1)(d-i+1)-1}\right)^\vee
=
\PP\!\left(\operatorname{Mat}_{(d-i+1)\times(n+1)}\right).
$
Thus a matrix $A=(a_{jk})$ defines the linear space
$
L=\PP(\ker A)$ in the Grassmannian $\Gr(n-d+i-1,\PP^n)
$.
The Plücker embedding $\rho: \CC[p_{I}] \to \CC[a_{jk}]$ is given by
$$
p_I\mapsto \det(a_{jk})_{k\in I},
\quad \text{for } \; |I|=d-i+1.
$$
By an analogous argument as in Proposition~\ref{prop: GbBasisSegre}, the following result is straightforward.
\begin{proposition}\label{lem: weightPltoStiefel}
Let $-\widetilde\omega$ be the weight on dual Stiefel coordinates
$a_{jk}$ defined by
$
-\widetilde\omega_{jk}=-\omega_k$,
for 
$0\leq j\leq d-i$ and $ 0\leq k\leq n
$.
A Gröbner degeneration in dual Stiefel coordinates with weight
$-\widetilde\omega$ induces the
Gröbner degeneration in Plücker coordinates with weight
$-\omega^{(i)}$, where
$$
\omega^{(i)}_I=\sum_{k\in I}\omega_k.
$$
\end{proposition}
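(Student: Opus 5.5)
We need to prove Proposition (weight from Stiefel to Plücker): a Gröbner degeneration in dual Stiefel coordinates with weight $-\widetilde\omega$ induces the Plücker degeneration with weight $-\omega^{(i)}$.

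The plan is to mimic the proof of Proposition~\ref{prop: GbBasisSegre}: realize both degenerations as orbit closures of a one-parameter torus action and check that the Plücker map $\rho$ intertwines the two actions. For $t\in\CC^\times$, the Gröbner degeneration with weight $-\widetilde\omega$ in dual Stiefel coordinates acts by $a_{jk}\mapsto t^{-\omega_k}a_{jk}$. In matrix form this is $A\mapsto A\,D_t$, where $D_t=\operatorname{diag}(t^{-\omega_0},\ldots,t^{-\omega_n})$, so every column $k$ of $A$ is rescaled by the same factor.

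The key computation is on maximal minors. For $|I|=d-i+1$, multilinearity of the determinant in the columns gives
$$
\rho(p_I)(A D_t)=\det\bigl(t^{-\omega_k}a_{jk}\bigr)_{k\in I}=t^{-\sum_{k\in I}\omega_k}\det(a_{jk})_{k\in I}=t^{-\omega^{(i)}_I}\rho(p_I)(A).
$$
Hence $\rho\circ(\cdot D_t)=\phi_t\circ\rho$, where $\phi_t$ is the torus action on Plücker space with weight $-\omega^{(i)}$. Since $\rho$ is right-multiplication equivariant, the image under $\rho$ of the fiber $Z_t=Z\cdot D_t$ of any $\rho$-saturated family $Z\subset\PP(\operatorname{Mat})$ equals $\phi_t(\rho(Z))$. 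In our application $Z$ is the Cayley-trick dual $Y^\vee=\overline{\rho^{-1}(\mathcal Z_i(X))}$, which is $\mathrm{GL}_{d-i+1}$-invariant, so it is indeed $\rho$-saturated.

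To pass from $t\neq0$ to the full family, I would take closures over $\CC^\times$ on both sides. Closure of the Plücker side over $\CC^\times$ is by definition the Gröbner degeneration with weight $-\omega^{(i)}$, since a Gröbner degeneration is the orbit closure as in the proof of Proposition~\ref{prop: conormal deg}. On the ideal level, the equivariance shows that $\rho^*$ maps weighted homogenizations to weighted homogenizations: a polynomial $f(p)$ with $-\omega^{(i)}$-weights pulls back to a polynomial in $a$ whose monomials inherit the same $-\widetilde\omega$-weight. Thus $\rho^*$ is a graded map for the two weight gradings, which gives compatibility of $\operatorname{in}_{-\omega^{(i)}}$ with $\operatorname{in}_{-\widetilde\omega}$ restricted to the Plücker subalgebra.

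The main obstacle is the special fiber $t=0$. The rational map $\rho$ is only defined off the rank-deficient locus, so one must check that the flat limit on the Stiefel side does not acquire components supported in $\{\operatorname{rank}A<d-i+1\}$ that would break the correspondence. I would handle this by working with the invariant-theoretic description: $\CC[p_I]$ is the ring of $\mathrm{SL}_{d-i+1}$-invariants in $\CC[a_{jk}]$, and the torus action commutes with the left $\mathrm{GL}$ action. Taking initial ideals then commutes with intersecting with the invariant subring, since the weight grading is compatible with the $\mathrm{SL}$-isotypic decomposition. This yields $\operatorname{in}_{-\omega^{(i)}}(I\cap\CC[p])=\operatorname{in}_{-\widetilde\omega}(I)\cap\CC[p]$ for $\mathrm{SL}$-stable ideals $I$, which is exactly the claimed statement.
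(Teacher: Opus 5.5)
Your proposal is correct and follows the paper's route. The paper's proof only says that the torus-action argument of Proposition~\ref{prop: GbBasisSegre} carries over, and that is exactly your computation $\det\bigl((AD_t)_I\bigr)=t^{-\omega^{(i)}_I}\det(A_I)$ followed by taking closures over $\CC^\times$. Your extra step at $t=0$ is sound: the Reynolds operator preserves the weight grading and $\mathrm{SL}$-stable ideals, giving $\operatorname{in}_{-\widetilde\omega}(I)\cap\CC[a]^{\mathrm{SL}}=\operatorname{in}(I\cap\CC[a]^{\mathrm{SL}})$, and since the Plücker ideal is weight-homogeneous this transfers to $\CC[p_I]$. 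It makes explicit the special-fiber comparison that the paper leaves as ``straightforward.''
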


\begin{proof}[Proof of Theorem~\ref{thm:higher-associated-limit}]
Let
$
\mathcal Y:=\sigma_i(\mathcal X\times \PP^{d-i})
\subset \PP^{(n+1)(d-i+1)-1}\times \Ab^1 .
$
By Proposition~\ref{prop: GbBasisSegre}, this is the Gröbner degeneration of
$Y=\sigma_i(X\times\PP^{d-i})$ with weight
$\widetilde\omega_{jk}=\omega_k$. Hence, by~Corollary~\ref{cor: dual deg}, the dual family
$\mathcal Y^\vee$ degenerates with 
$-\widetilde\omega$ in dual Stiefel coordinates. By the Cayley trick~\eqref{eq: cayleytrick},
$$
\overline{\rho^{-1}(\mathcal Z_i(X_t))}=Y_t^\vee .
$$
By Proposition~\ref{lem: weightPltoStiefel}, the family
$\mathcal Z_i(\mathcal X/\Ab^1)$ is then  the Gröbner degeneration with
 weight~$-\omega^{(i)}$.

\smallskip

We now describe the special fiber. By the product property of Whitney
stratifications from Remark~\ref{rem: whitney properties}, the
strata $S\times\PP^{d-i}$ with $S\in\mathcal S$, form a Whitney
stratification of $\mathcal X\times\PP^{d-i}$. Applying the Segre
embedding gives a minimal Whitney stratification of $\mathcal Y$, compatible~with~$Y_0$. We now apply Proposition~\ref{prop: multiplicities in dual}
to the family $\mathcal Y$ (where we omit defective strata closures):
$$
[(\mathcal Y^\vee)_0]
=
\sum_{S\in\mathcal S_0}
m_S
\left[
\bigl(\sigma_i(\overline S\times\PP^{d-i})\bigr)^\vee
\right].
$$
Let $s=\dim S$ and set
$
j_S=i-d+s.
$
Since $d-i=s-j_S$, the Cayley trick applied to $\overline S$ gives
$$
\bigl(\sigma_i(\overline S\times\PP^{d-i})\bigr)^\vee
=
\overline{\rho^{-1}\bigl(\mathcal Z_{j_S}(\overline S)\bigr)}.
$$
By~\cite[Ch.~1, Cor.~5.9]{Gelfand1994}, this dual is hypersurface
if and only if
$
{0\leq j_S\leq
s-\codim(\overline S)^\vee+1}
$.
Passing from Stiefel to Plücker coordinates then yields exactly the decomposition in~\eqref{eq: higher associated limit}.

\smallskip

It remains to identify the multiplicities. For
$(x,y)\in S\times\PP^{d-i}$, the Milnor fiber of 
$\mathcal Y\to\Ab^1$ is $F_x\times B$, with $B$ a small ball in
$\PP^{d-i}$, and local Euler obstruction is unchanged under product with a
smooth factor:
$
\operatorname{Eu}_{\overline T\times\PP^{d-i}}(x,y)
=
\operatorname{Eu}_{\overline T}(x).
$
Moreover, in Sabbah's formula~\eqref{eq: sabbah}, dimensions of the fiber and of all the strata
increase by $d-i$, so the signs are unchanged. Hence Sabbah's
system for $\mathcal Y$ is identical to that for
$\mathcal X$, and the coefficients are the same~$m_S$.
\end{proof}

\section{Mixed Discriminants for Horizontally Parametrized systems}\label{sec: discriminants}

We now apply the preceding results to discriminants of parametrized
polynomial systems. We use the framework of \emph{horizontally parametrized
systems} introduced in~\cite{HPsystems2024}. Fix collections of
polynomials 
$
\mathcal B_i=\{b_{i0},\ldots,b_{ik_i}\}\subset \KK[x_1,\ldots,x_n]$ for $
i=1,\ldots,n
$.
A horizontally parametrized system with parameters $a_{ij}$ and polynomial supports $\mathcal B_i$ is a system 
\begin{equation}\label{eq: horizontalSys intro}
f_i(x)=\sum_{j=0}^{k_i}a_{ij}b_{ij}(x)=0,
\quad i=1,\ldots,n.
\end{equation}
The system~\eqref{eq: horizontalSys intro} is   \emph{unmixed} if all supports
$\mathcal B_i$ coincide, \emph{fully mixed} if they are pairwise
distinct, and \emph{semi-mixed} if the supports fall into several blocks
of equal supports.
This generalizes sparse polynomial systems, where the supports consist
of monomials. The corresponding discriminant records parameter
values for which the system is non-generic, for instance has  a
multiple root in $(\KK^*)^n$. In the sparse case this recovers the
mixed discriminants of~\cite{Cattani2013}.
\begin{example}
Generically, the following system has four isolated solutions in $(\KK^*)^2$:
$$
a_0+a_1x+a_2y+a_3(x^2+y^2)
=
b_0+b_1x+b_2y+b_3(y^2+y)
=0
$$
 The discriminantal locus where two of these solutions collide is a hypersurface in
$\PP^3\times\PP^3$ of bidegree $(5,6)$. Its
defining polynomial has $334$ terms, starting with
$$
\Delta_{\rm mixed}
=
16a_3^5b_0^2b_1^4
-16a_1a_3^4b_0b_1^5
-4a_2^2a_3^3b_1^6
+\cdots
+16a_0a_3^4b_3^6.
\vspace{-0.75cm}
$$
   \hfill~$\diamond$ 
\end{example}
We now explain how the theory of the previous sections applies to such
systems. We first treat the unmixed case, so
$
\mathcal B_1=\cdots=\mathcal B_n=\{b_0,\ldots,b_k\}.
$
Choose a weight vector~$\omega$ on the $x$-variables and let $b_j^t$ be
the corresponding weighted homogenization of $b_j$. This gives maps
$$
\phi_t:\CC^n\dashrightarrow \PP^k,\quad
x\mapsto [b_0^t(x):\cdots:b_k^t(x)],
$$
with image closures $X_t=\overline{\operatorname{im}(\phi_t)}$. Set
$
Y_t=\sigma(X_t\times \PP^{n-1}).
$
Then, by the
Cayley trick~\eqref{eq: cayleytrick}, the dual~$Y_t^\vee$ is the Hurwitz hypersurface of~$X_t$, equivalently the
discriminant of the system with polynomial support~$\{b_j^t\}$. When $\omega$
induces a total order on $\{b_j\}$, then $b_j^0=\operatorname{in}_\omega(b_j)$
are monomials, hence the variety $X_0$ is toric and the limit system is sparse.
By~\cite[Theorem~11.4]{sturmfels1996grobner}, the family $X_t$ is a
Gröbner toric degeneration of $X$ precisely when
$
\{s b_0,\ldots,s b_k\}\subset \CC[s,x]
$
is a \emph{Khovanskii basis}~\cite{KavehManon2019} w.r.t.~the weight
$
\widehat\omega:=(1,\omega)
$.
In that case the induced weight on the coordinates of $\PP^k$ is
$
v=A^\top\widehat\omega
$,
where the columns of $A$ are the exponent vectors of the monomials
$
s\cdot\operatorname{in}_\omega(b_j)
$.
By Proposition~\ref{prop: GbBasisSegre}, the Segre varieties
$Y_t$ form a Gröbner degeneration with weight
$
u=(v,\ldots,v)
$.
Thus the results of Section~\ref{sec: HighAssociated} apply to the
corresponding discriminants.

\begin{remark}
An extension of this result to semi-mixed supports, and more generally to
positive-dimensional solution sets, would require a theory of
\emph{multigraded higher associated hypersurfaces}. For
the multigraded Chow and Hurwitz forms, see
\cite{OssermanTrager2019,DoganErgurTsigaridas2023,PrattSodomacoSturmfels2026}.
\end{remark}

\begin{example}[Graph of the Grassmannian]
The graph of the Grassmannian was studied in
\cite{borovik2024coupledclusterdegreegrassmannian} due to its
relation to \emph{coupled cluster equations} in quantum chemistry. This leads to
an eigenvalue problem on the Grassmannian, which is
equivalent to taking a general linear section of the graph.
Fix $n>k$, and consider the chart of $\Gr(k,\PP^n)$ of row spans of matrices
$$
X=
\begin{footnotesize}
 \begin{pmatrix}
1 & 0 & \cdots & 0 & x_{1,k+2} & \cdots & x_{1,n+1}\\
0 & 1 & \cdots & 0 & x_{2,k+2} & \cdots & x_{2,n+1}\\
\vdots & & \ddots & \vdots & \vdots & & \vdots\\
0 & 0 & \cdots & 1 & x_{k+1,k+2} & \cdots & x_{k+1,n+1}
\end{pmatrix}   
\end{footnotesize}
.
$$
After homogenizing with an extra variable $x_0$, the maximal minors of~$X$
define a rational map
$$
\mathbf i:\PP^{(k+1)(n-k)}\dashrightarrow
\PP^{\binom{n+1}{k+1}-1}.
$$
Let $\mathcal G(k,\PP^n)$ be the graph of this map. The discriminant of
the coupled cluster equations is the Hurwitz form
$\operatorname{Hu}(\mathcal G(k,\PP^n))$. Fix $k=1$.
By~\cite[Proposition~10]{borovik2024coupledclusterdegreegrassmannian},
there is a weight~$\omega$ for which the homogenized 
parametrization of $\mathcal G(1,\PP^n)$ forms a Khovanskii basis. This
gives a Gröbner toric degeneration of $\mathcal G(1,\PP^n)$. Let
$\mathcal T(1,n)$ be the toric special fiber.
Its polytope is the Cayley sum of the Gelfand-Tsetlin polytope with the
simplex corresponding to the coordinates on $\PP^{2(n-1)}$ and  admits a regular unimodular triangulation. Hence both the polytope and the toric variety
$\mathcal T(1,n)$ are normal by
\cite[Proposition~13.15]{sturmfels1996grobner}. Thus 
$\mathcal T(1,n)$ is regular in codimension one, and
Corollary~\ref{cor:higher-associated-no-extra} applies.  That is, if $v$ is the
induced weight on the graph coordinates and $u=(v,\ldots,v)$ is the
induced Stiefel weight, then we have
$$
\operatorname{in}_{-u}\operatorname{Hu}(\mathcal G(1,\PP^n))
=
\operatorname{Hu}(\mathcal T(1,n)).
$$
Thus, the coupled cluster discriminant degenerates to the sparse
$A$-discriminant of the~eigenvalue problem on $\mathcal T(1,n)$. Assuming \cite[Conjecture~14]{borovik2024coupledclusterdegreegrassmannian}, the
same holds  for any~$k$.~\hfill$\diamond$
\end{example}

\subsection*{Generalized Cayley configurations}

We now consider systems which are not necessarily unmixed. Similarly to  previous sections, to such systems we associate a projective variety,  
whose dual gives us the discriminantal locus. 

\smallskip

Following~\cite{Cattani2013}, a solution $u\in(\KK^*)^n$ of a horizontally parametrized system
\begin{equation*}
f_i(x)=\sum_{j=0}^{k_i}a_{ij}b_{ij}(x)=0,
\quad i=1,\ldots,n.
\end{equation*} is  a \emph{non-degenerate multiple
root} if the gradients
$
\nabla_x f_1(u),\ldots,\nabla_x f_n(u)
$
are linearly dependent, while any $n-1$ of them are linearly
independent. Equivalently, $\operatorname{Jac}_F(u)$ has corank~one.
The \emph{mixed discriminantal variety} is the closure of the set
of coefficients for which the system has a non-degenerate multiple root
in $(\KK^*)^n$. When this is a hypersurface, its defining equation
is the \emph{mixed discriminant}  $\Delta_{\rm mixed}$, otherwise the
system is called defective and we set $\Delta_{\rm mixed}=1$.

\smallskip

Introduce auxiliary variables $y=(y_1,\ldots,y_n)$ and set
$
G(x,y)=y_1f_1(x)+\cdots+y_nf_n(x).
$
We associate to $G$ the projective variety $X_G\subset \PP^m$,
where $m+1=\sum_i(k_i+1)$, parametrized~by
\begin{align*}
    \Phi:(\KK^*)^n\times \PP^{n-1} &\dashrightarrow \PP^m, \quad
    (x,y) \mapsto
[
y_1b_{10}(x):\cdots:y_1b_{1k_1}(x):
\cdots:
y_nb_{n0}(x):\cdots:y_nb_{nk_n}(x)
].
\end{align*}
Let $\Delta_G$ be the $X_G$-discriminant, i.e., the defining
equation of $X_G^\vee$ when this dual is a hypersurface, and
$\Delta_G=1$ otherwise. Equivalently, $\Delta_G$ vanishes whenever $\mathcal{V}(G)$ is singular.

\begin{theorem}[Generalized Cayley trick]\label{thm:genCayleyTrick}
Assume that, for each $i$, the base locus system 
$
b_{i0}(x)=\cdots=b_{ik_i}(x)=0
$
has no solution in $(\KK^*)^n$. Then
$
\Delta_G=\Delta_{\rm mixed}.
$
\end{theorem}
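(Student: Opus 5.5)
We prove that the two hypersurfaces $X_G^\vee$ and the mixed discriminantal variety coincide as subvarieties of $\PP^m=\PP(\prod_i \KK^{k_i+1})^\vee$. Since both defining polynomials are taken reduced (or $1$ in the defective case), this gives $\Delta_G=\Delta_{\rm mixed}$. The strategy is the classical one behind the Cayley trick of~\cite{Cattani2013}. We identify the conormal incidence of $X_G$ over the image of the dense torus-like chart $(\KK^*)^n\times\PP^{n-1}$ with the incidence of coefficient vectors and non-degenerate multiple roots, and then take closures.

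First, we make the parametrization precise. A point $a=(a_{ij})\in(\PP^m)^\vee$ defines the hyperplane $H_a$, and
$$\Phi^*H_a=\Bigl\{\sum_{i,j}a_{ij}y_ib_{ij}(x)=0\Bigr\}=\{G_a(x,y)=0\},\qquad G_a(x,y)=\sum_i y_if_i(x).$$
The base-locus hypothesis guarantees that $\Phi$ is a morphism on $(\KK^*)^n\times\PP^{n-1}$: for every $x$ and every $i$ with $y_i\neq0$, some $b_{ij}(x)\neq0$. The standard characterization of the dual variety of a parametrized variety then applies. By generic smoothness of $\Phi$ onto its image, $H_a$ is tangent to $X_G$ at a smooth point $\Phi(x,y)$ in the image of a point where $d\Phi$ has maximal rank if and only if $(x,y)$ is a critical point of $G_a$ with $G_a(x,y)=0$. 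Since $G_a$ is homogeneous of degree one in $y$, Euler's relation makes $G_a=0$ automatic once $\partial_yG_a=0$. The criticality conditions are therefore
$$f_i(x)=0\ \ (i=1,\dots,n),\qquad \sum_i y_i\nabla_xf_i(x)=0 .$$
This says exactly that $x$ is a common root of the system at which the gradients are linearly dependent, with dependence vector $y$. Because $\dim X_G^\vee$ is computed from an open dense set of such tangencies, we obtain
$$X_G^\vee=\overline{\{a\mid \exists\,(x,y)\in(\KK^*)^n\times\PP^{n-1}:\ f_i(x)=0,\ y^\top \mathrm{Jac}_F(x)=0\}}.$$

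Next, we compare this set with the mixed discriminantal variety, which is the closure of the locus where the corank of $\mathrm{Jac}_F(u)$ is exactly one. The set $E$ above contains that locus, with $y$ determined uniquely by the corank-one condition, so one inclusion is immediate. For the reverse inclusion, we show that the points of $E$ with corank $\ge2$ lie in the closure of the corank-one locus or in a proper closed subset of lower dimension. The argument is a dimension count on the incidence variety
$$\Sigma=\{(a,x,y): f_i(x)=0,\ y^\top\mathrm{Jac}_F(x)=0\}.$$
For fixed $(x,y)$ the conditions are linear in $a$. Again using the base-locus hypothesis, they are $2n$ linear conditions on $a$, independent for general $(x,y)$, because each row $i$ involves only the block $a_{i\bullet}$ and the vector $(b_{ij}(x))_j\neq0$. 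The sublocus where the corank is at least $2$ has fibres over $(x,y)$ of the same kind but with an extra parameter in $y$ and more equations, so it projects to a set of strictly smaller dimension than the main component. We also have to rule out components of $\Sigma$ lying entirely in the corank-$\ge2$ locus. This follows because $\Sigma$ is a vector bundle over an open subset of $(\KK^*)^n\times\PP^{n-1}$, hence irreducible, and its general point has corank one whenever the projection is generically finite. In the defective case, both sides have codimension greater than one and both polynomials equal $1$.

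The main obstacle is the precise tangency criterion in the first step. The map $\Phi$ may fail to be generically injective or immersive; for instance, if the $b_{ij}$ have a common monomial factor structure, $\Phi$ may factor through a finite or positive-dimensional quotient. We need to know that critical points of $G_a$ on the parameter space correspond to tangent hyperplanes at smooth points of $X_G$ and vice versa, up to closure. We plan to argue as follows. By generic smoothness there is a dense open set $U$ of the source where $\Phi$ is smooth onto $X_G^{\rm sm}$; there, tangency of $H_a$ at $\Phi(p)$ is equivalent to $d(\Phi^*\ell_a)_p=0$ together with $\ell_a(\Phi(p))=0$. Outside $U$, the extra critical points contribute only to a closed set of smaller dimension in $(\PP^m)^\vee$. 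This follows from the same linear-fibre dimension count, using once more that each row block of linear conditions has full rank by the base-locus assumption.
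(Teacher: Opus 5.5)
\textbf{Where the proof breaks.} Your overall route is the paper's: match critical points of $G_a$ with tangent hyperplanes to $X_G$, then use generic finiteness of $\mathcal N_{X_G}\to X_G^\vee$ to exclude a second kernel vector. The gap is in the step you flag as the main obstacle, and the proposed resolution does not work.

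The base-locus hypothesis only makes the $n$ conditions $f_i(x)=0$ on $a$ independent. All $2n$ conditions together say $a\perp\operatorname{im} d\hat\Phi_{(x,y)}$, where $\hat\Phi(x,y)=(y_ib_{ij}(x))_{i,j}$ is the cone map. So they are independent exactly where $\Phi$ is an immersion, and the hypothesis says nothing about that. Over a divisor $R\subset(\KK^*)^n\times\PP^{n-1}$ where the rank drops by one, $\Sigma$ has a further component of dimension $(2n-2)+(m+1-(2n-1))-1=m-1$, the same as the main one. Its points are critical points of $G_a$ that need not come from tangent hyperplanes, and it can map onto a hypersurface. Hence $\Sigma$ is not irreducible in general, and $X_G^\vee=\overline E$ fails.

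\textbf{First source: coordinate hyperplanes of $\PP^{n-1}$.} This happens even for monomials. Take $\mathcal B_1=\{1,x_1,x_1^2\}$ and $\mathcal B_2=\{1,x_1,x_2\}$.
\begin{itemize}
\item $X_G=\overline{\{[y_1:y_1x_1:y_1x_1^2:y_2:y_2x_1:y_2x_2]\}}$ is a cone over a surface in $\PP^4$. So $\dim X_G^\vee\le 3$ and $\Delta_G=1$.
\item Yet $y=[1:0]$ puts the hypersurface $a_{11}^2=4a_{10}a_{12}$ into $E$.
\item At these roots $\operatorname{Jac}_F(u)$ has corank exactly one, with $\nabla f_1(u)=0$.
\end{itemize}
So your corank-one description of the mixed discriminantal variety is also too weak. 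You need every $n-1$ gradients to be independent, i.e.\ the kernel vector $\lambda$ has no zero entry. That condition confines $y$ to the torus of $\PP^{n-1}$ and discards this locus.

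\textbf{Second source: ramification inside the torus.} This is ramification of $x\mapsto([b_{1\bullet}(x)],\dots,[b_{n\bullet}(x)])$ inside $(\KK^*)^n$, and here no argument can close the gap.
\begin{itemize}
\item For $n=1$ and $\mathcal B_1=\{1,x^2-2x\}$: the base locus is empty, $X_G=\PP^1$ and $\Delta_G=1$. But $a_{10}+a_{11}(x^2-2x)$ has a double root at $x=1$ exactly when $a_{10}=a_{11}$. For $n=1$ that root is non-degenerate, so $\Delta_{\rm mixed}=a_{10}-a_{11}$.
\item For $\mathcal B_1=\mathcal B_2=\{1,x_1,x_2^2-2x_2\}$: $X_G$ is the dual-defective Segre variety $\PP^2\times\PP^1$. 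Meanwhile non-degenerate double roots at $x_2=1$ fill the hypersurface $(a_{10}-a_{12})a_{21}=a_{11}(a_{20}-a_{22})$.
\end{itemize}

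\textbf{Relation to the paper's proof.} The paper passes over the same point when it says that a singular point of $\mathcal V(G^c)$ is ``equivalently'' a tangency of $H_c$ to $X_G$. What is actually needed is an extra hypothesis. For instance: $\Phi$, restricted to points with $[y]$ in the torus of $\PP^{n-1}$, is a smooth morphism onto an open subset of $X_G^{\rm sm}$. This holds for monomial supports, where $\Phi$ is a torus orbit map, as in \cite{Cattani2013}. Under such a hypothesis, and with $y$ restricted to the torus, your argument goes through and essentially coincides with the paper's.
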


\begin{proof}
The argument follows closely the proof of~\cite[Theorem~2.1]{Cattani2013}.
Let $c=(c_{ij})$ be a general point of the mixed discriminantal
variety ${\cal V}(\Delta_{\rm mixed})$. Thus the following system
$$
f_i^c(x)=\sum_{j=0}^{k_i}c_{ij}b_{ij}(x),\quad i=1,\ldots,n,
$$
has a non-degenerate multiple root $u\in(\KK^*)^n$. Hence
$\operatorname{Jac}_F(u)$ has corank one, so we can~choose
$$
0\neq \lambda=(\lambda_1,\ldots,\lambda_n)
\in \ker(\operatorname{Jac}_F(u)^\top).
$$ 
Let
$
G^c(x,y)=\sum_{i=1}^n y_i f_i^c(x)
$.
Then
$
G^c(u,\lambda)=0$, since $\frac{\partial G^c}{\partial y_i}(u,\lambda)=f_i^c(u)=0$, and also
\begin{align*}
  \nabla_xG^c(u,\lambda)
=
\sum_{i=1}^n \lambda_i\nabla_xf_i^c(u)
=
\operatorname{Jac}_F(u)^\top\lambda
=
0.
\end{align*}
Thus $(u,[\lambda])$ is a singular point of the hypersurface
${\cal V}(G^c)$. Equivalently, the hyperplane $H_c$ is
tangent to $X_G$. Thus, the mixed discriminant  is
contained in $X_G^\vee$. In particular, if $\Delta_{\rm mixed}\neq 1$,
then $\Delta_{\rm mixed}$ divides $\Delta_G$. Since $X_G^\vee$ is
irreducible when it is a hypersurface, equality follows once both
discriminants are non-defective.
If both $\Delta_{\rm mixed}$ and $\Delta_G$ are defective, we still get
$$
\Delta_{\rm mixed} = \Delta_G.
$$
It remains to exclude the possibility that $X_G^\vee$ is a hypersurface
while the mixed discriminant  is defective. Assume that
$X_G^\vee$ is a hypersurface, and let $c\in X_G^\vee$ be general.
Choose a smooth point
$
q\in X_G
$
at which the corresponding hyperplane $H_c$ is tangent. Since $c$ is general,
we may assume that $q=\Phi(u,[\lambda])$ lies in the image of $\Phi$, and by the
hypothesis on the supports, for every $i$ the vector
$
(b_{i0}(u),\ldots,b_{ik_i}(u))
$
is nonzero. Tangency means precisely that $(u,[\lambda])$ is a critical
point of $G^c$. Hence
$u$ is a multiple root of the system, that is,
$$
f_1^c(u)=\cdots=f_n^c(u)=0 \quad \text{and} \quad 
\operatorname{Jac}_F(u)^\top\lambda=0.
$$
Suppose this multiple root is degenerate. Then some $n-1$ of the
vectors $\nabla_x f_i^c(u)$ are linearly dependent, say $
\nabla_x f_1^c(u),\ldots,\nabla_x f_{n-1}^c(u)
$. Equivalently, there
exists a nonzero vector
$$
\mu=(\mu_1,\ldots,\mu_{n-1},0)\in \ker(\operatorname{Jac}_F(u)^\top) \quad \text{such that } \quad \sum_{i=1}^{n-1}\mu_i \nabla_x f_i^c(u) = 0.
$$
 For all $s\in\KK$, outside
a finite set, the vector $\lambda+s\mu$ is nonzero and
$
\operatorname{Jac}_F(u)^\top(\lambda+s\mu)=0.
$
Therefore $(u,[\lambda+s\mu])$ is again a critical point of $G^c$.
The corresponding points
$
\Phi(u,[\lambda+s\mu])
$
are distinct for general $s$, since no support block
$(b_{i0}(u),\ldots,b_{ik_i}(u))$ vanishes. Hence the
hyperplane  $H_c$ is tangent to $X_G$ along a positive-dimensional set. Thus, the 
projection
$
\mathcal N_{X_G}\to X_G^\vee
$
is positive-dimensional, which contradicts the assumption that $X_G^\vee$
is non-defective. Thus, for general $c\in X_G^\vee$, the corresponding system has a
non-degenerate multiple root.
Hence ${\cal V}(\Delta_{\rm mixed})$    contains a dense open subset of $X_G^\vee$, and thus it is a
hypersurface. Therefore
the two discriminants coincide again. This completes the proof.
\end{proof}

\begin{example}[Duffing oscillator]
Harmonic balancing for the \emph{Duffing oscillator} gives~polynomial systems
whose algebraic geometry was studied in~\cite{BREIDING2025110492, BorovikBreiding2024}. The first~example~is
$$
a_1x(x^2+y^2)+a_2x+a_3y+a_4=0,\quad
b_1y(x^2+y^2)+b_2x+b_3y+b_4=0.
$$
Its discriminant $\Delta$ has degree $18$ and $578$ terms
by~\cite[Theorem~3.1]{BREIDING2025110492}. It is the defining equation of
the dual variety $X_G^\vee$ from the generalized Cayley construction parametrized by
$$
\{sx(x^2+y^2),\,sx,\,sy,\,s,\,ty(x^2+y^2),\,tx,\,ty,\,t\}.
$$
By~\cite[Lemma 5.4]{BorovikBreiding2024}, this set is a Khovanskii basis for the weight
$
\omega(s,t,x,y)=(1,1,2,1)
$.
The induced sparse system in the limit is
$
a_1x^3+a_2x+a_3y+a_4=0=
b_1xy^2+b_2x+b_3y+b_4=0.
$
Its $A$-discriminant $\Delta_A$ has degree $14$ and $90$ terms
and  appears as a factor of $\operatorname{in}_u(\Delta)$:
$$
\operatorname{in}_u(\Delta)=\Delta_A\,b_1^2a_2^2,
\quad
u=-A^\top\omega = -(7,3,2,1,6,3,2,1),
$$
where $u$ is the induced weight on the dual coordinates. The extra factors $b_1^2a_2^2$ arise from the dual hypersurfaces to
zero-dimensional strata in the singular locus~of the special fiber~$(X_G)_0$.~\hfill$\diamond$
\end{example}


\section*{Acknowledgements}
We are grateful to Luca Sodomaco, Bernd Sturmfels, Julian Weigert, and Leonie Kayser for helpful discussions. We especially thank Simon Telen for suggesting that the degeneration techniques of
this paper could be applied to reciprocal linear spaces.
This work was carried out during the master's internship of C.B. at Max Planck Institute for Mathematics in the Sciences in Leipzig. C.B. thanks MPI-MiS for its hospitality and the Fondation de l’ENS for financial support. 
The research of V.B. is funded by the European Union (ERC, UNIVERSE PLUS, 101118787). {\small Views and opinions expressed are, however, those of the authors only and do not necessarily reflect those of the European Union or the European Research Council Executive Agency. Neither the European Union nor the granting authority can be held responsible for them.}

\printbibliography
\bigskip 

\medskip 

\bigskip
\noindent
\small {\bf Authors' addresses:}
\smallskip

\noindent Viktoriia Borovik, MPI-MiS Leipzig, Germany
\hfill {\tt viktoriia.borovik@mis.mpg.de}

\noindent Clara Briand, École Normale Supérieure, France
\hfill {\tt clara.briand@ens.psl.eu}
\end{document}